\renewcommand{\[}{\begin{equation}}
\renewcommand{\]}{\end{equation}}
\newtheorem{proposition}{\sc Proposition}[section]
\newtheorem{lemma}[proposition]{\sc Lemma}
\newtheorem{corollary}[proposition]{\sc Corollary}
\newtheorem{theorem}[proposition]{\sc Theorem}
\theoremstyle{definition}
\theoremstyle{remark}
\newcommand{\can}{{\sf can}}
\newcommand{\id}{\operatorname{id}}
\renewcommand{\ker}{\operatorname{Ker}}
\def\sw#1{{\sb{(#1)}}}
\newcommand{\ot}{\otimes}
\renewcommand{\phi}{\varphi}
\renewcommand{\epsilon}{\varepsilon}
\renewcommand{\subset}{\subseteq}
\def\t{\sp{[2]}}
\newcommand{\counit}{\varepsilon}
\newcommand{\co}{\,\mathrm{co}\,}
\def\C{{\Bbb C}}
\def\N{{\Bbb N}}
\def\R{{\Bbb R}}
\def\Z{{\Bbb Z}}
\def\id{{\rm id}}
\newcommand{\ls}[1]{\ell(#1)^{\langle 1\rangle}}
\newcommand{\rs}[1]{\ell(#1)^{\langle 2\rangle}}
\newcommand{\ffun}{\mathcal{O}}
\newcommand{\pchoose}[3]{
\left[\!\!
\begin{array}{c}
#1\\
#2
\end{array}
\!\!\right]_{#3}
}
\newcommand{\Span}{\text{Span}}
\newcommand{\pQ}{{\tilde{Q}}}
\newcommand{\qQ}{Q}
\newcommand{\dgz}{{\text{deg}}}
\newcommand{\mxda}{{\text{max deg}_{\mathcal{X}}}}
\newcommand{\mxdb}{{\text{max deg}_{\mathcal{Y}}}}
\newcommand{\mnda}{{\text{min deg}_{\mathcal{X}}}}
\newcommand{\mndb}{{\text{min deg}_{\mathcal{Y}}}}
\newcommand{\T}{{\mathcal{T}}}
\newcommand{\Zn}{{\Z/N\Z }}
\newcommand{\Rt}[1]{\stackbin[#1]{}{\rtimes}}
\renewcommand{\Im}{{\text{Im}}}
\begin{document}
\baselineskip=16pt
\author{Piotr~M.~Hajac}
\address{Instytut Matematyczny, Polska Akademia Nauk, ul.~\'Sniadeckich 8, Warszawa, 00-956 Poland\\
Katedra Metod Matematycznych Fizyki, Uniwersytet Warszawski, ul. Ho\.za 74, Warszawa, 00-682 Poland} 
\email{http://www.impan.pl/\~{}pmh, http://www.fuw.edu.pl/\~{}pmh}
\author{Adam Rennie}
\address{Mathematical Sciences Institute\\
Australian National University\\
Canberra, ACT, 0200 Australia} 
\email{adam.rennie@anu.edu.au}
\author{Bartosz~Zieli\'nski}
\address{Instytut Matematyczny, Polska Akademia Nauk, ul.~\'Sniadeckich 8, Warszawa, 00-956 Poland\\
Department of Theoretical Physics and Computer Science, University of \L{}\'od\'z, Pomorska 149/153 90-236 \L{}\'od\'z, Poland}
\email{bzielinski@uni.lodz.pl}
\title[Heegaard quantum lens spaces]{\large The K-theory of Heegaard quantum lens spaces}
\vspace*{-15mm}\maketitle

{\vspace*{-7mm}\it\large\hfil 
Dedicated to Alan Carey on the occasion of his
60th birthday.\hfil}

\begin{abstract}
Representing $\mathbb{Z}/N\mathbb{Z}$
 as roots of unity, we restrict a natural $U(1)$-action on
the  Heegaard quantum sphere to $\mathbb{Z}/N\mathbb{Z}$, and call the quotient spaces Heegaard quantum lens spaces. 
Then we  use this representation of $\mathbb{Z}/N\mathbb{Z}$ to
construct an
associated complex line bundle. This paper proves the stable
non-triviality
of these line bundles over any of the quantum lens spaces we
consider.
We use the pullback structure of the $C^*$-algebra of the lens space 
to compute its $K$-theory via the Mayer-Vietoris sequence, and an  explicit form of the Bass connecting homomorphism to prove the stable non-triviality
of the  bundles. On the algebraic side we prove the universality of the coordinate algebra of such a lens space 
for a particular set of generators and relations. We also prove the non-existence of non-trivial
invertibles in the coordinate algebra of a lens space. Finally, we prolongate
the $\mathbb{Z}/N\mathbb{Z}$-fibres of 
the Heegaard quantum sphere to $U(1)$, and
determine
the algebraic structure of such a $U(1)$-prolongation.
\end{abstract}

\tableofcontents

\section*{Introduction and preliminaries}

\subsection{Introduction}

It is hard to deny that lens spaces are interesting. Indeed,
they have provided a rich source of examples highlighting subtle phenomena in 
topology. They are simple examples of closed 3-manifolds not determined by 
their homology and fundamental group alone. They  also give examples of 
spaces that might be homotopic but not homeomorphic.  Even  today they still 
provide a fertile arena in which to study topological questions, e.g., 
see~\cite{py03}. 

A typical feature of lens spaces is that they possess non-trivial
line bundles giving rise to torsion in $K$-theory.
 This property of lens spaces remains a 
characteristic feature of their 
quantum analogues, and is a focal point of this paper. In brief, we choose
a particular family of quantum lens spaces, define natural complex line bundles
over them, and prove that they generate torsion in the $K_0$-group.

More precisely, we study a family of three-dimensional lens spaces arising 
from a particular family of quantum 3-spheres, namely the
Heegaard quantum spheres $S^3_{pq\theta}$~\cite{bhms05}. The $C^*$-algebras 
of these Heegaard quantum lens spaces were
defined in \cite{hms06b} as fixed-point subalgebras for a $\Z/N\Z$-action 
obtained by restricting the
natural (diagonal) $U(1)$-action $\alpha$ on the $C^*$-algebra
 $C(S^3_{pq\theta})$ of the Heegaard quantum sphere. Likewise,
we consider fixed-point subalgebras of the coordinate algebra 
$\ffun(S^3_{pq\theta})$ 
of the Heegaard quantum sphere.
We denote the thus obtained coordinate algebras and $C^*$-algebras of these 
quantum lens spaces by $\ffun(L^N_{pq\theta})$ and $C(L^N_{pq\theta})$ 
respectively.

Note that in \cite{z-b05} a different $U(1)$-action was used to define 
another type of Heegaard quantum lens spaces. Both of these types of 
Heegaard quantum lens spaces are  different from those 
quantum lens spaces studied in~\cite{hs03}. The latter are graph $C^*$-algebras
and the former are pullback $C^*$-algebras. This is a crucial technical
difference between these two families of $C^*$-algebras
resulting in application of different tools to study their $K$-theory.

Next, we again represent $\Z/N\Z$ via roots of unity and define the
following associated module 
\[
L_N:=\left\{x\in C(S^3_{pq\theta})\;\left|\;\alpha_{e^{\frac{2\pi i}{N}}}(x)=e^{\frac{2\pi i}{N}}x\right\}\subset C(S^3_{pq\theta})\right.
\]
over $C(L^N_{pq\theta})$. This is a finitely generated projective module
defining a natural complex line bundle for each of our quantum lens spaces.
Our main result can be now summarized as follows.
\begin{theorem} 
The left $C(L^N_{pq\theta})$-module $L_N$ is not stably free, and
$[L_N] - [C(L^N_{pq\theta})]$ generates the torsion part of 
$K_0(C(L^N_{pq\theta}))$.
\end{theorem}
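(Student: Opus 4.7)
The plan is to exploit the pullback structure that $C(L^N_{pq\theta})$ inherits from the Heegaard decomposition of the sphere. Since the restricted $\Z/N\Z$-action respects the pullback presentation
\[
C(S^3_{pq\theta})\;\cong\;A_1'\oplus_{B'}A_2'
\]
coming from the two ``solid-torus'' pieces of the Heegaard splitting, taking $\Z/N\Z$-fixed points should yield a pullback
\[
C(L^N_{pq\theta})\;\cong\;A_1\oplus_B A_2,
\]
where each of $A_1,A_2,B$ is the fixed-point subalgebra of the corresponding piece. First I would identify these three fixed-point algebras up to isomorphism, recognising them in terms of Toeplitz-like algebras and noncommutative tori, and then compute their $K$-theory (one expects $K_*(A_i)\cong\Z$ in each degree and $K_*(B)$ free abelian of small rank, just as in the classical Heegaard picture).

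Second, I would set up the six-term Mayer--Vietoris exact sequence for this pullback. The restriction map $K_1(A_1)\oplus K_1(A_2)\to K_1(B)$ should have cokernel $\Z/N\Z$, where the factor $N$ records the index of $\Z/N\Z$ inside $U(1)$, and this cokernel embeds into $K_0(C(L^N_{pq\theta}))$ via the Bass connecting homomorphism. That would identify $\Z/N\Z$ as the torsion subgroup of $K_0(C(L^N_{pq\theta}))$, with the remaining $K_0$ being free and accounted for by the trivial class $[C(L^N_{pq\theta})]$.

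Third, I would show that $[L_N]-[C(L^N_{pq\theta})]$ hits a generator of this torsion. On each half of the pullback the line bundle $L_N$ trivialises: the spectral subspace for the defining character of $\Z/N\Z$ contains an invertible element, essentially a generator of $C(S^3_{pq\theta})$ restricted to the corresponding solid torus. Consequently $L_N$ is glued from the two free rank-one modules via an explicit unitary $u\in B$ extracted from these trivialisations, and by the explicit form of the Bass connecting homomorphism one has $[L_N]-[C(L^N_{pq\theta})]=\partial[u]$. Computing $[u]\in K_1(B)$ modulo the image of the restriction maps, and checking that it generates the $\Z/N\Z$-cokernel, would finish the proof.

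The main obstacle will be this last step: extracting the gluing unitary $u$ from the $\Z/N\Z$-grading and proving that its class in $K_1(B)$ has order \emph{exactly} $N$, not a proper divisor. This demands enough explicit control of $K_1(B)$ and of the restriction maps from $K_1(A_i)$ to pin the order down precisely, rather than merely bounding it above by $N$. A subsidiary technical point is verifying that the fixed-point decomposition is a genuine $C^*$-algebraic pullback with surjective connecting maps, so that Mayer--Vietoris applies unchanged after passing to $\Z/N\Z$-invariants.
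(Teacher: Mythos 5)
Your proposal follows essentially the same route as the paper: restrict the Heegaard pullback of $C(S^3_{pq\theta})$ to its $\Z/N\Z$-fixed points (yielding the pullback of $\T\rtimes_{\pm N\theta}\Z$ over $C(S^1)\rtimes_{N\theta}\Z$), run the Mayer--Vietoris six-term sequence, and compute the Bass connecting homomorphism on the gluing unitary $Z$ to identify $[L_N]-[C(L^N_{pq\theta})]$ with $-\mathrm{Bass}[Z]$, a generator of the $\Z/N\Z$ torsion. The paper's only presentational difference is that it reads off a projection for $L_N$ from a strong connection (namely $ss^*=(\tilde z_+\tilde z_+^*,1)$) and compares it directly with the idempotent produced by the Bass formula applied to the lift $\tilde z_+$ of $Z$, rather than phrasing $L_N$ as Milnor-patched along a unitary; the hard step you flag---that $[Z]$ survives to a generator of the cokernel of $\tilde\pi_{1*}-\tilde\pi_{2*}$ on $K_1$---comes out immediately once the restriction maps from $K_1$ of the two Toeplitz crossed products are written out explicitly.
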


On the way,
we prove that  $\ffun(L^N_{pq\theta})$ is universal for a certain set of 
generators and relations.
Having done this, we show that  $\ffun(S^3_{pq\theta})$  contains no 
invertibles other than non-zero multiples
of the identity. This allows us to prove that the $\ffun(\Z/N\Z)$-comodule 
algebra $\ffun(S^3_{pq\theta})$  is non-cleft, which reflects the
non-triviality of
 the noncommutative $\Z/N\Z$-principal
bundle $S^3_{pq\theta}\rightarrow L^N_{pq\theta}$.
However, to conclude a 
stronger result that
the finitely generated projective $\ffun(L^N_{pq\theta})$-module
\[
\mathcal{L}_N:=\left\{x\in \ffun(S^3_{pq\theta})\;\left|\;\alpha_{e^{\frac{2
\pi i}{N}}}(x)=e^{\frac{2\pi i}{N}}x\right\}\subset \ffun(S^3_{pq\theta})
\right.
\]
is not stably free, we turn to $C^*$-algebras. Representing $L_N$
by an idempotent
with entries in $\ffun(L^N_{pq\theta})$, we infer
the stable non-triviality of $\mathcal{L}_N$ over $\ffun(S^3_{pq\theta})$
from our main result.

The final section proves a quantum version of the classical phenomenon that
$S^3\times_{\Z/N\Z}U(1)\cong S^3\times U(1)$,
in both the algebraic and $C^*$-setting. Namely, we prove that
\begin{align}
C(S^3_{pq\theta})\,\bar{\otimes}\, C(U(1))&\cong (C(S^3_{pq\theta})
\,\bar{\otimes}\, C(U(1)))^{\Z/N\Z},\nonumber\\
\ffun(S^3_{pq\theta})\,{\otimes}\, \ffun{(U(1))}&\cong (\ffun(S^3_{pq\theta})
\otimes \ffun(U(1)))^{\Z/N\Z}.
\end{align}

Here and in what follows, the unadorned tensor product stands
for the algebraic tensor product over the ground field $k$,
 typically of complex
numbers. Since in this paper there is no ambiguity concerning $C^*$-completions
of the algebraic tensor product, we simply use $\bar{\otimes}$ to
denote the completed tensor product. Also, 
we use the convention that, for algebras $A$, $B$, and coalgebras $C$, $D$,
the symbol ${}_A^C\mathrm{Hom}_B^D$ signifies the set of
$k$-linear homomorphisms that are left $A$-linear, right $B$-linear,
left $C$-colinear and right $D$-colinear.

\subsection{Principal comodule algebras}\label{principal} 

The comultiplication, counit
and the antipode of a Hopf algebra $H$  are denoted by $\Delta$, 
$\varepsilon$ and $S$, respectively. 
A right $H$-comodule algebra $P$ is a unital associative algebra  equipped 
with an $H$-coaction 
$ \Delta_P : P \rightarrow P \otimes H$ that is an algebra map.  
For a comodule algebra $P$, we call 
\begin{equation}
P^{\co H}:=\left\{p \in P\,|\,\Delta_P(p)=p \otimes 1\right\}
\end{equation}
the subalgebra of coaction-invariant elements in $P$. A left coaction
on $V$ is denoted by ${}_V\Delta$.
For comultiplications and coactions, 
we often employ the Heynemann-Sweedler
notation with the summation symbol suppressed: 
\begin{equation}
\Delta(h)=:h_{(1)}\otimes h_{(2)},\quad
\Delta_P(p)=:p_{(0)}\otimes p_{(1)},\quad
{}_V\Delta(v)=:v_{(-1)}\otimes v_{(0)}.
\end{equation}
With this notation, the {\em convolution product} of maps
$f$ and $g$ from a coalgebra to an algebra is given by
$(f*g)(h):=f(h_{(1)})g(h_{(2)})$.

If $M$ is a right comodule over a coalgebra $C$ and $N$ is a left $C$-
comodule,
then we define their {\em cotensor product} as
\[
M\underset{C}{\Box}N:=\{t\in M\otimes N\;|\;(\Delta_M\otimes\id)(t)=
(\id\otimes{}_N\Delta)(t)\}.
\]
In particular, for a right $H$-comodule algebra $P$ and a left 
$H$-comodule $V$, we observe that $P\Box_H V$ is a left $P^{\co H}$-
module
in a natural way.
Furthermore, if $V$ is a Hopf algebra with comultiplication 
$\tilde{\Delta}$, a Hopf algebra
surjection $\pi:V\rightarrow H$, and a left coaction 
${}_V\Delta:=(\pi\ot\id)\circ\tilde{\Delta}$, 
then $P\Box_H V$ becomes a $V$-comodule
algebra for the coaction $\id\ot\tilde{\Delta}$.
 

An $H$-comodule algebra $P$ is called  {\em principal} 
\cite{bh04} if: 
\begin{enumerate}
\item
$P{\otimes}_B P\ni p \otimes q \mapsto\can(p \otimes q):=
pq_{(0)} \otimes q_{(1)}\in P \otimes H$
is bijective;
\item
$\exists s\in {}_B\mathrm{Hom}^H(P,B\otimes P):\; m\circ s=\id$,
where $m$ is the multiplication map;
\item
the antipode of $H$ is bijective.
\end{enumerate}
Here (1) is the Hopf-Galois (freeness) condition, (2) means equivariant
projectivity of $P$, and (3) ensures a left-right symmetry of the
 definition (everything can be re-written for left comodule
algebras).
The inverse of the map $\can$ can be written explicitly using 
Heynemann-Sweedler like notation: 
$\can^{-1}(p\otimes h):=ph\sp{[1]}\otimes_B h\t$. Here the map
\begin{equation}
H\ni h\longmapsto\can^{-1}(1\otimes h)
=:h\sp{[1]}\underset{B}{\otimes} h\t
\in P\underset{B}{\otimes}P
\end{equation}
is called a {\em translation map}, and satisfies
$h\sp{[1]} h\t=\counit(h)$.

One of the key properties of principal comodule algebras is that,
for any finite-dimensional left $H$-comodule $V$, the left $P^{\co H}$-module
$P\Box_HV$ is finitely generated projective~\cite{bh04}. Here $P$ plays the
role of a principal bundle and $P\Box_HV$ plays the role of an associated
vector bundle. Therefore, we call $P\Box_HV$ an \emph{associated module}.
On the other hand, if $V$ is a Hopf algebra and 
$P\Box_HV$ is a $V$-comodule algebra as described above,
then (under some minor technical assumptions) the principality of
the $H$-coaction on $P$ implies the principality of the $V$-coaction on
$P\Box_HV$ (see~\cite{qsng}). 
We call the principal comodule algebra $P\Box_HV$
the \emph{$V$-prolongation} of~$P$ because it is a direct analogue
of a prolongation of a principal bundle that is obtained by 
enlarging its structure group.   

If $H$ is a Hopf algebra with bijective antipode and $P$ is 
a right $H$-comodule algebra, then one can show (cf.~\cite{bh04})
that it is
principal if and only if there exists a linear map
\begin{equation}
\ell:H\longrightarrow P\otimes P, \quad h\longmapsto
\ell(h)=:\ls{h}\otimes \rs{h},
\end{equation}
such that, for all $h\in H$, $\ell$ satisfies the three equations
\begin{gather}
\ls{h}\rs{h}\sw{0}\otimes\rs{h}\sw{1}=1\otimes h,\\
S(h\sw{1})\otimes\ls{h\sw{2}}\otimes \rs{h\sw{2}}=
\ls{h}\sw{1}\otimes\ls{h}\sw{0}\otimes\rs{h},\\
\ls{h\sw{1}}\otimes\rs{h\sw{1}}\otimes h\sw{2}
=\ls{h}\otimes\rs{h}\sw{0}\otimes\rs{h}\sw{1}.
\end{gather}
Any such  map $\ell$ can be made unital \cite{bh04}. It is 
then called a {\em strong connection} \cite{h-pm96,dgh01,bh04}, 
and can be thought of as an appropriate lifting of 
the translation map.
Given a strong connection, we can explicitly compute an idempotent representing
the module $P\Box_HV$~\cite{bh04}. In particular, if $\dim V=1$, the coaction
${}_V\Delta$ is determined by a group-like $g\in H$, i.e., 
${}_V\Delta(1)=g\otimes 1$. Then, in order to obtain an idempotent representing
$P\Box_HV$, we write $\ell(g)=\sum_i x_i\otimes e_i$, where elements  $e_i$
are chosen
to be linearly independent. The desired idempotent matrix is given by
$e_{ij}:=e_ix_j$.

A special class of principal comodule algebras is distinguished by the 
existence of a cleaving map. A cleaving map is defined as a unital 
right $H$-colinear
convolution-invertible map \mbox{$j:H\rightarrow P$}. Having a cleaving map,
one can define a strong connection as
$
\ell:=(j^{-1}\otimes j)\circ\Delta
$,
where $j^{-1}$ stands for the convolution inverse of~$j$.
Comodule algebras admitting
a cleaving map are called {\em cleft}. In particular, if $j$ is
a colinear algebra homomorphism, it is a cleaving map (not the
other way round). 
In this special case a cleaving map serves as an analogue of a 
trivialisation of a principal bundle.
 Therefore, we can refer to comodule algebras admitting a cleaving
map that is an algebra homomorphism as trivial comodule algebras. 
Note that proving the non-cleftness 
of a principal comodule algebra is stronger
than proving its non-triviality.

All modules associated with cleft
comodule algebras are always free. Also, one can show that a cleaving map is
automatically injective. Therefore, as the value of a cleaving map on
a group-like element is invertible, we can conclude that the existence of
a non-trivial group-like in $H$ necessitates the existence of an invertible
element in $P$ that is not a multiple of~$1$. Hence one of the ways to prove
the non-cleftness of a principal comodule algebra over a Hopf algebra with a 
non-trivial group-like is to show the lack of non-trivial invertibles in
the comodule algebra.

\subsection{From quantum disc to quantum lens spaces}

\subsubsection{Quantum disc}
\label{subsub:Quantum disc}

A two-parameter family of quantum unit discs was defined in 
\cite{Kl:Disc}. Here we consider the one 
parameter subfamily studied therein. We start with a coordinate $\ast$-algebra $\ffun(D_p)$ generated by 
a single element $x$ and the relation
\begin{equation}
x^\ast x-pxx^\ast=1-p,\ \ 0\leq p<1.\label{disc}
\end{equation} 

We can introduce another algebra $\ffun^-(D_p)$ generated by $x_{-}$ with relation 
\begin{equation}
x_{-}^\ast x_{-}-p^{-1}x_{-}x_{-}^\ast=1-p^{-1}.
\end{equation} 
Then assignment $x\mapsto x_{-}^{\ast}$ can be extended to a $\ast$-algebra isomorphism
\begin{equation}
\kappa_p:\ffun(D_p)\rightarrow\ffun^-(D_{p}).\label{kappa}
\end{equation}


Let us denote for brevity
$X:=(1-xx^*)$, so that $1-x^*x=pX$. It follows from (\ref{disc}) that $Xx=pxX$,
and more generally
\begin{equation}
X^kx^n=p^{kn}x^nX^k,\qquad X^kx^{*n}=p^{-kn}x^{*n}X^k,\quad 
n,k\in\N,
\label{1mxx}
\end{equation} 
where the second equation follows from the self-adjointness of $X$. The 
universal $C^*$-algebra for the relation
\eqref{disc} contains $\ffun(D_p)$ and is isomorphic with
the Toeplitz algebra ${\mathcal T}$  
 for all $0\leq p<1$~\cite{Kl:Disc}. In particular, we can take
the relation \eqref{disc} with $p=0$ as a convenient presentation for 
the $C^*$-algebra ${\mathcal T}$. Then \eqref{disc} reduces to
$x^*x=1$, so that $x$ becomes an isometry.

\subsubsection{Heegaard quantum sphere}

For $0\leq p,q,\theta<1$, $\theta$ irrational, the coordinate
algebra of the Heegaard quantum sphere $\ffun(S^3_{pq\theta})$ 
\cite{bhms05}
is the universal $*$-algebra generated  by two elements $a$ and $b$ 
satisfying the relations
\begin{subequations}
\label{heegard}
\begin{gather}
ab=e^{i2\pi\theta}ba,\quad ab^*=e^{-i2\pi\theta}b^*a,\\
a^*a-paa^*=1-p,\quad b^*b-qbb^*=1-q,\\
(1-aa^*)(1-bb^*)=0.
\end{gather}
\end{subequations}
Recall that $\{a,a^*\}$ and $\{b,b^*\}$ generate algebras 
$\ffun(D_p)$ and 
$\ffun(D_q)$ respectively (see \cite[(2.44)]{bhms05}).
Furthermore, by Subsection~\ref{subsub:Quantum disc}, for
$A:=(1-aa^*)$, $B:=(1-bb^*)$,
 we have the relations
\begin{equation}
Aa=paA,\quad Ab=bA,\quad Ba=aB,\quad Bb=qbB,\quad A^*=A,\quad B^*=B.
\end{equation}
Now we can write a basis  of 
$\ffun(S^3_{pq\theta})$ \cite{bhms05} as 
\begin{equation}\label{hbasis}
\{A^ka^\mu b^\nu\;|\;k\geq 0,\mu,\nu\in\Z\}\cup 
\{B^ka^\mu b^\nu\;|\;k> 0,\mu,\nu\in\Z\}.
\end{equation}
Here for $\mu,\nu<0$ we have written $b^\mu=b^{*|\mu|}$ and 
$a^\nu:=a^{*|\nu|}$ for brevity. The $C^*$-algebra of the Heegaard
quantum sphere $C(S^3_{pq\theta})$ can also be defined as the universal
$C^*$-algebra for the relations~\eqref{heegard}. One can set the 
parameters $p$ and $q$ equal to zero without changing this 
$C^*$-algebra, and prove that it is isomorphic with a certain pullback
$C^*$-algebra~\cite{bhms05}.

Let $\ffun(U(1))$ be the coordinate $*$-Hopf algebra of $U(1)$ 
generated by a unitary $u$. 
The coaction of $\ffun(U(1))$ on $\ffun(S^3_{pq\theta})$ is defined on 
generators by
$\rho(a)=a\otimes u$, $\rho(b)=b\otimes u$.
This coaction defines a $\Z$-grading $\text{deg}:\ffun(S^3_{pq\theta})\rightarrow\Z$ 
on $\ffun(S^3_{pq\theta})$, with $\text{deg}(a)=1=\text{deg}(b)$. 
Note that all the basis elements in \eqref{hbasis} have a definite grading degree. The coaction $\rho$ can be equivalently written
as an action 
\[
\alpha: U(1)\longrightarrow \mathrm{Aut}(\ffun(S^3_{pq\theta})),
\quad\alpha_{e^{i\varphi}}(a)=e^{i\varphi}a,
\quad\alpha_{e^{i\varphi}}(b)=e^{i\varphi}b.
\]
This action extends to the $C^*$-algebra 
$C(S^3_{pq\theta})$. One can 
prove that the algebra of coaction-invariant (or action-invariant)
elements is generated as a $*$-algebra
by  $A$, $B$, and $z=ab^*$. They satisfy the relations
\begin{subequations}
\label{lenseB}
\begin{gather}
A^*=A,\quad B^*=B,\quad AB=0,\quad
Az=pzA,\quad zB=qBz,\label{lenseba}\nonumber\\
z^*z=1-pA-B,\quad zz^*=1-A-qB\label{lensebb}.
\end{gather} 
\end{subequations}
The universal $*$-algebra for these relations
coincides with the coaction-invariant
subalgebra. We call it
 the coordinate algebra of a mirror quantum sphere~\cite{hms06b}.
Note that by \cite{HKMZ},  $\ffun(S^3_{pq\theta})$ is a piecewise trivial principal comodule algebra.
The covering is given by a pair of ideals $\ffun(S^3_{pq\theta})A$ and $\ffun(S^3_{pq\theta})B$. The quotients 
$\ffun(S^3_{pq\theta})/\ffun(S^3_{pq\theta})A$ and $\ffun(S^3_{pq\theta})/\ffun(S^3_{pq\theta})B$ are both given by
quantum solid tori~\cite{bhms05}.

\subsubsection{Heegaard quantum lens spaces \cite{hms06b,z-b05}}
\label{subsub:lens}

The $*$-Hopf algebra  $\ffun(\Z/N \Z)$ is generated by a unitary
 element $\tilde{u}$ satisfying $\tilde{u}^N=1$. 
There is a natural surjection $\pi:\ffun(U(1))\rightarrow \ffun(\Z/N\Z)$
 given by $u\mapsto \tilde{u}$. This surjection defines a coaction
of $\ffun(\Z/N\Z)$ on $\ffun(S^3_{pq\theta})$. The coaction-invariant 
subspace for this $\ffun(\Z/N\Z)$-coaction is simply the subspace of 
elements of
degree divisible by~$N$. We denote this algebra by 
$\ffun(L^N_{pq\theta})$, and  call $L^N_{pq\theta}$ the Heegaard 
quantum lens space of type~$N$. 
Likewise, we inject $\Z/N\Z$ into $U(1)$ via roots of unity, and use
the action $\alpha$ to define the $\Z/N\Z$-invariant subalgebra
of the $C^*$-algebra $C(S^3_{pq\theta})$. We call the invariant
subalgebra the $C^*$-algebra
of the Heegaard 
quantum lens space of type~$N$, and denote by~$C(L^N_{pq\theta})$.

\subsection{The Bass connecting homomorphism}

Consider a pullback diagram  
\begin{equation}
\mbox{$\xymatrix@=5mm{& & A \ar[lld]_{\mathrm{pr}_1}
\ar[rrd]^{\mathrm{pr}_2} & &\\
A_1 \ar@{>>}[drr]_{\pi_1}& & & &A_2 \ar[dll]^{\pi_2}\\
&& A_{12} &&}$}
\end{equation}
in the category of unital
algebras. Explicitly, we can write 
\[
A\cong\{(a_1,a_2)\in A_1\times A_2\;|\;\pi_1(a_1)=\pi_2(a_2)\}
=\ker\left(
A_1\oplus A_2\stackrel{(\pi_1,-\pi_2)}{\longrightarrow} A_{12}
\right).
\] 
If one of the defining
morphisms (here we choose~$\pi_1$) is surjective,
then there exists a
long exact sequence in algebraic $K$-theory \cite{b-h68}
\begin{equation}\label{long}
\cdots\longrightarrow {K}_1^{\text{alg}}(A_1\oplus A_2)\longrightarrow
{K^{\text{alg}}}_1(A_{12})
\stackrel{\mathrm{\tiny Bass}^{\text{alg}}}{\longrightarrow}
{K}_0^{\text{alg}}(A)\longrightarrow {K}_0^{\text{alg}}(A_1\oplus A_2)
\longrightarrow {K}_0^{\text{alg}} (A_{12}).
\end{equation}
The mapping 
$\mathrm{Bass}^{\text{alg}}:{K}_1^{\text{alg}}(A_{12})\longrightarrow
{K}_0^{\text{alg}}(A)$ is obtained as follows. 
Take  an 
invertible matrix $U\in {GL}_n(A_{12})$ representing a class
in $K_1^{\text{alg}}(A_{12})$. 
There exist liftings ${c},{d}\in{{M}}_n(A_{1})$ such
that $\pi_1({c})=U^{-1}$ and $\pi_1({d})=U$.
Then (e.g., see \cite{DHHM}) 
\begin{equation}\label{p}
 p_U:=
  \left(
  \begin{array}{cc}
  ({c}(2-{d}{c}){d},1 ) & ({c}(2-{d}{c})(1-{d}{c}),0 ) \cr
  ((1-{d}{c}){d},0 ) & ((1-{d}{c})^2,0  )
  \end{array}
  \right)\in {{M}}_{2n}(A).
 \end{equation}
is an idempotent matrix. The assignment 
\[\label{bassproj}
\mathrm{Bass}^{\text{alg}}: K_1^{\text{alg}}(A_{12})\ni [U]\longmapsto 
[p_U]-[I_n] \in K_0^{\text{alg}}(A),
\]
where $I_n$ is the
identity matrix of the same size as the matrix~$U$, gives
 the Bass connecting homomorphism~\cite[Theorem~3.3, Page~28]{m-j71}.

It is known that the Bass connecting
homomorphism exists also for the $K$-theory of $C^*$-algebras
(cf.~\cite{gh04}), and
is given by the same explicit formula. 
Since this formula is pivotal in proving our main result,
for the sake of completeness, we provide its complete proof
assuming it for the algebraic $K$-theory\footnote{
This proof is a courtesy of Nigel Higson.}. We proceed by translating
the final part of the long exact sequence \eqref{long} into the 
$K$-theory of $C^*$-algebras. The $K_0$-groups are simply the same,
and comparing the definitions of $K_1^{\text{alg}}$ and $K_1$ immediately
yields a functorial surjection \mbox{$K_1^{\text{alg}}(A)\!\ni\![U]
\mapsto [U]\!\in\! K_1(A)$}
for any unital $C^*$-algebra~$A$.

Next, we want to split this surjection and define 
\mbox{$K_1(A_{12})\!\stackrel{\mathrm{Bass}}{\longrightarrow}\!K_0(A)$}
 by
composing such a set-theoretical 
splitting with $\mathrm{Bass}^{\text{alg}}$. In order
to show that it is independent of the choice of a splitting, we need to
use the homotopy invariance of~$K_0$. More precisely, if 
$[U_0]=[U_1]\in K_1(A_{12})$, then there exists $n\in\N$ such that
$\widetilde{U_0}:=\mathrm{diag}(U_0,I_k)$ and 
$\widetilde{U_1}:=\mathrm{diag}(U_1,I_l)$ are 
elements of $GL_n(A_{12})$ that are homotopic via 
 elements of~$GL_n(A_{12})$. In other words, there exists an invertible
 element $U$ in the
$C^*$-algebra $C([0,1],M_n(A_{12}))
\cong M_n(C([0,1],A_{12}))\cong
M_n(A_{12}\,\bar{\otimes}\,C([0,1]))$ satisfying 
$\mathrm{ev}_0(U)=\widetilde{U_0}$ and
$\mathrm{ev}_1(U)=\widetilde{U_1}$, 
where $\mathrm{ev}_t$ stands for the evaluation
map at~$t$. Furthermore, since tensoring with nuclear $C^*$-algebras is
exact, we can conclude that $A\,\bar{\otimes}\,C([0,1])$ 
is isomorphic  with the pullback
\[
\ker\left((\pi_1,-\pi_2)\otimes\id:
(A_{1}\,\bar{\otimes}\,C([0,1]))\oplus (A_{2}\,\bar{\otimes}\,C([0,1]))
\longrightarrow A_{12}\,\bar{\otimes}\,C([0,1])
\right).
\]
This allows us to
 apply the Bass construction to $U$ to obtain an idempotent
$p_U$ in  the $C^*$-algebra
$M_{2n}(A\bar{\otimes}C([0,1]))$. On the other hand,
the evaluation maps $\mathrm{ev}_0,\mathrm{ev}_1:
A\,\bar{\otimes}\,C([0,1])\rightarrow A$ are homotopic, so that, by
the homotopy invariance of $K_0$, we conclude that 
$[p_{\widetilde{U_0}}]={\mathrm{ev}_0}_*[p_U]={\mathrm{ev}_1}_*[p_U]
=[p_{\widetilde{U_1}}]\in K_0(A)$. Consequently, 
we obtain
\[
\mathrm{Bass}^{\text{alg}}([U_0])=
\mathrm{Bass}^{\text{alg}}([\widetilde{U_0}])=
[p_{\widetilde{U_0}}]-[I_n]=
[p_{\widetilde{U_1}}]-[I_n]=
\mathrm{Bass}^{\text{alg}}([\widetilde{U_1}])=
\mathrm{Bass}^{\text{alg}}([U_1]).
\]

Thus we have 
defined a map \mbox{$\mathrm{Bass}:
K_1(A_{12})\!\stackrel{\mbox{\tiny\rm lift}}
{\longrightarrow}\!
K_1^{\mathrm{alg}}(A_{12})\!\stackrel{\mathrm{Bass}^{\text{alg}}}
{\longrightarrow}\!K_0^{\mathrm{alg}}(A)=K_0(A)$}. Since
$[\mathrm{diag}(U,U')]\in K_1^{\text{alg}}(A_{12})$ 
is a lifting of $[\mathrm{diag}(U,U')]\in K_1(A_{12})$, the
map $\mathrm{Bass}$ is automatically a group homomorphism.
This leads to the the following diagram:
\begin{equation}
\vcenter{
\xymatrix{
K_1^{\text{alg}}(A_1)\oplus K_1^{\text{alg}}(A_2)
\ar[rr]^-{{\pi_{1}}_{\ast\text{alg}}-{\pi_{2}}_{\ast\text{alg}}}
\ar@<-1cm>[d]\ar@<1cm>[d]&&
K_1^{\text{alg}}(A_{12})\ar[r]^-{\mathrm{Bass}^{\text{alg}}}\ar[d]&
K_0(A)
\ar[rr]^-{({\mathrm{pr}_1}_\ast , {\mathrm{pr}_2}_\ast)}\ar@{=}[d]&&
K_0(A_1)\oplus K_0(A_2)\ar@{=}[d]\\
K_1(A_1)\oplus K_1(A_2)
\ar[rr]^-{{\pi_{1}}_{\ast}-{\pi_{2}}_{\ast}}&& 
K_1(A_{12})\ar[r]^-{\mathrm{Bass}}&
K_0(A)\ar[rr]^-{({\mathrm{pr}_1}_\ast , {\mathrm{pr}_2}_\ast)}&&
K_0(A_1)\oplus K_0(A_2).
}
}
\end{equation}
Here the vertical arrows are canonical surjections. The commutativity
of the first two squares follows from the functoriality of these 
surjections. The remaining two squares are commutative by construction.
Now, the exactness of the top row (see~\eqref{long}) and the surjectivity
of all vertical arrows imply the exactness of the bottom row. 
Combining this with the Bott periodicity, we obtain
the Mayer-Vietoris 6-term exact sequence \cite{s-c84,bm} 
\begin{equation}\label{mv}
     	\begin{CD}
	{{K}_0 (A)} @ >{({\mathrm{pr}_1}_\ast , {\mathrm{pr}_2}_\ast)}>> {{K}_0 ( A_1 )
\oplus {K}_0 (A_2)} @ >
        {{\pi_1}_\ast - {\pi_2}_\ast} >> {{K}_0 (A_{12} )} @
	. @  .\\
 	@ A{\mathrm{Bass}}AA @ . @ VV{}V @ .\\
	{{K}_1 (A_{12}) } @ <{{\pi_1}_\ast - {\pi_2}_\ast} 
        << {{K}_1 (A_1 ) \oplus {K}_1 (A_2)} @ <
         {({\mathrm{pr}_1}_\ast , {\mathrm{pr}_2}_\ast)} << {{K}_1 (A)}\ . @ . @ .\\
     	\end{CD}
\end{equation}

\section{Comodule algebras over the coordinate
algebras of Heegaard  lens spaces}
\setcounter{equation}{0}

\subsection{Polynomial identities for the quantum disc}\label{quantumdisc}

Recall the definition of $p$-deformed binomial coefficients
\begin{equation}
\pchoose{n}{m}{p}:=\frac{[n]_p!}{[m]_p![n-m]_p!}, \label{pchoosedef}
\end{equation}
where $p$-deformed factorials
$[n]_p!=[1]_p[2]_p\ldots[n-1]_p[n]_p \text{ for } n-1\in\N,\ 
[0]_p!=1$,
are defined in terms of $p$-deformed naturals
$[n]_p=1+p+p^2+\ldots +p^{n-2}+p^{n-1} \text{ for } n-1\in\N, \ 
[0]_p=0$.

Let $Y$ be a variable. Define a family of polynomials in $Y$,
for $p\in\R_+$ and $n-1\in\N$, by the formulae
\begin{equation}
\label{qdef}
\pQ^p_n(Y)=\sum_{m=1}^n(-1)^mp^{-nm+\frac{m(m+1)}{2}}\pchoose{n}{m}{p}Y^m.
\end{equation}
\begin{lemma}
\label{lem:first-poly}
The polynomials~\eqref{qdef} are uniquely determined by the  recursive equations
\begin{equation}\label{qdefrec}
\pQ^p_1(Y)=-Y,\quad \pQ^p_{n+1}(Y)=(1-Y)\pQ^p_n(p^{-1}Y)-Y.
\end{equation}
\end{lemma}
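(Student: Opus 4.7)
The plan splits into two parts: uniqueness and existence. Uniqueness is immediate because the second equation in \eqref{qdefrec} expresses $\pQ^p_{n+1}$ entirely in terms of $\pQ^p_n$, so together with the initial datum $\pQ^p_1(Y)=-Y$ it determines every $\pQ^p_n$ by induction on $n$. All the work therefore lies in verifying that the closed form \eqref{qdef} satisfies both equations in \eqref{qdefrec}.

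The base case is a one-line check: evaluating the sum in \eqref{qdef} at $n=1$ leaves only the $m=1$ term, which collapses to $-Y$. For the inductive step, I would substitute the closed form of $\pQ^p_n$ into the right-hand side $(1-Y)\pQ^p_n(p^{-1}Y)-Y$. Replacing $Y$ by $p^{-1}Y$ multiplies the $m$-th summand by $p^{-m}$, changing the exponent of $p$ from $-nm+\tfrac{m(m+1)}{2}$ to $-(n+1)m+\tfrac{m(m+1)}{2}$, which is exactly the exponent appearing in $\pQ^p_{n+1}$.

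Next I would distribute $(1-Y)$ to obtain two sums: one indexed by $m=1,\dots,n$ and the other (coming from $-Y\cdot\pQ^p_n(p^{-1}Y)$) obtained by shifting $m\mapsto m+1$, hence indexed by $m=2,\dots,n+1$. For $2\le m\le n$ the coefficient of $Y^m$ on the right-hand side factors as
\[
(-1)^m p^{-(n+1)m+\frac{m(m+1)}{2}}\!\left(\pchoose{n}{m}{p}+p^{n+1-m}\pchoose{n}{m-1}{p}\right),
\]
where the shift of exponent in the second term uses $\tfrac{m(m+1)}{2}-\tfrac{(m-1)m}{2}=m$. The remaining task is to match this with the coefficient of $Y^m$ in $\pQ^p_{n+1}$, which reduces to the $p$-Pascal identity
\[
\pchoose{n+1}{m}{p}=\pchoose{n}{m}{p}+p^{n+1-m}\pchoose{n}{m-1}{p},
\]
proved by applying $[n+1]_p=[n+1-m]_p+p^{n+1-m}[m]_p$ inside the definition \eqref{pchoosedef}. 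Finally I would treat the two boundary cases separately: the $m=n+1$ coefficient comes only from the shifted sum and matches on the nose, while the $m=1$ coefficient combines the $m=1$ term of the first sum with the stray $-Y$, giving $-p^{-n}[n]_p-1=-p^{-n}[n+1]_p$, in agreement with the closed form at $m=1$.

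The argument is entirely formal manipulation, so no conceptual obstacle arises; the only care needed is accurate bookkeeping of the exponent of $p$ after the index shift and correct recognition of the $p$-Pascal identity. Boundary terms must be handled explicitly because the $p$-Pascal identity implicitly uses $\pchoose{n}{0}{p}=\pchoose{n}{n+1}{p}=0$, and these vanishing conventions must be compatible with where the two shifted sums overlap.
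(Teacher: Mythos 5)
Your proof is correct and follows essentially the same route as the paper: induction on $n$, substitute the closed form, split into two sums after distributing $(1-Y)$, shift the index on the second, apply the $p$-Pascal identity for $2\le m\le n$, and treat the $m=1$ and $m=n+1$ boundaries explicitly. One small slip in your closing remark: the convention is $\pchoose{n}{0}{p}=1$ (as the paper states in its recursion \eqref{pchrec}), not $0$; what makes the $m=1$ boundary work is that the stray $-Y$ supplies exactly the $p^{n}\pchoose{n}{0}{p}$ term needed to complete the Pascal identity there — which is precisely the computation $-p^{-n}[n]_p-1=-p^{-n}[n+1]_p$ you carried out correctly — so the proof stands, only the explanatory comment is slightly off.
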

\begin{proof}
We will proceed by induction.
It follows from the definition of $\pQ^p_n$, Equation~\eqref{qdef}, that 
the case $n=1$ is satisfied. It is useful to rewrite the right-hand 
side of the second equation 
of \eqref{qdefrec} as follows
$
(1-Y)\pQ^p_n(p^{-1}Y)-Y=\pQ^p_n(p^{-1}Y)-Y\pQ^p_n(p^{-1}Y)-Y.
$
For the first term we will separate the $m=1$ term from the sum defining $\pQ^p_n(p^{-1}Y)$, 
while for the second we will separate the $m=n$ term, and then renumber the sum. This yields 
\begin{align}
&(1-Y)\pQ^p_n(p^{-1}Y)-Y\nonumber\\
&=(1-Y)\left(
\sum_{m=1}^n(-1)^mp^{-nm+\frac{m(m+1)}{2}}\pchoose{n}{m}{p}p^{-m}Y^m
\right)-Y\nonumber\\
&=-p^{-n}\pchoose{n}{1}{p}Y-Y-(-1)^np^{-\frac{n(n+1)}{2}}\pchoose{n}{n}{p}Y^{n+1}\nonumber\\
&\quad+\sum_{m=2}^n\left(
(-1)^mp^{-nm+\frac{m(m+1)}{2}}\pchoose{n}{m}{p}p^{-m}-(-1)^{m-1}p^{-n(m-1)+\frac{m(m-1)}{2}}\pchoose{n}{m-1}{p}p^{-(m-1)}\right)Y^m \nonumber\\
&=-p^{-n}\left(\pchoose{n}{1}{p}+p^n\pchoose{n}{0}{p}\right)-(-1)^np^{-\frac{n(n+1)}{2}}\pchoose{n+1}{n+1}{p}Y^{n+1}\nonumber\\
&\quad +\sum_{m=2}^n(-1)^mp^{-(n+1)m+\frac{m(m+1)}{2}}
\left(
\pchoose{n}{m}{p}+p^{n+1-m}\pchoose{n}{m-1}{p}
\right)Y^m.\nonumber
\end{align}
At this point we recall that for all $n\geq 0$, $m>0$, the deformed binomial coefficients satisfy the  recursive formula 
\begin{equation}
\label{pchrec}
\pchoose{n}{n}{p}=\pchoose{n}{0}{p}=1,\quad \pchoose{n+1}{m}{p}=\pchoose{n}{m}{p}+p^{n+1-m}\pchoose{n}{m-1}{p}.
\end{equation}

Applying this to our computation we obtain
\begin{align}
&-p^{-n}\pchoose{n+1}{1}{p}\!Y
+\!\sum_{m=2}^n(-1)^mp^{-(n+1)m+\frac{m(m+1)}{2}}\pchoose{n+1}{m}{p}\!Y^m-(-1)^np^{-\frac{n(n+1)}{2}}\pchoose{n+1}{n+1}{p}\!Y^{n+1}\nonumber\\
&=\sum_{m=1}^{n+1}(-1)^mp^{-(n+1)m+\frac{m(m+1)}{2}}\pchoose{n+1}{m}{p}Y^m=\qQ^p_{n+1}(Y).\nonumber
\end{align}
This completes the proof.
\end{proof}

\begin{lemma}
For all $m,n\in\N\setminus\{0\}$, 
the family of polynomials $\{\pQ^p_k\}_k$ satisfies
\begin{equation}
\label{qmneq}
\pQ^p_{m+n}(Y)=(1+\pQ^p_m(Y))\pQ^p_n(p^{-m}Y)+\pQ^p_m(Y).
\end{equation}
\end{lemma}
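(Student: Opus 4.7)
My plan is to induct on $n$ with $m$ arbitrary. Unwinding the base case $n=1$ via $\pQ^p_1(p^{-m}Y)=-p^{-m}Y$ reduces the claim to the auxiliary identity
\[
\pQ^p_{m+1}(Y)=(1-p^{-m}Y)\pQ^p_m(Y)-p^{-m}Y,
\]
which is a ``mirror'' one-step recursion complementing the one in Lemma~\ref{lem:first-poly}. I would prove this auxiliary identity either by a short induction on $m$ using Lemma~\ref{lem:first-poly}, or, equivalently, by substituting the explicit formula~\eqref{qdef} on both sides and appealing to the $p$-Pascal identity $\pchoose{m+1}{k}{p}=p^k\pchoose{m}{k}{p}+\pchoose{m}{k-1}{p}$, which follows from~\eqref{pchrec} combined with the symmetry $\pchoose{n}{k}{p}=\pchoose{n}{n-k}{p}$.

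For the inductive step, assume the formula holds for some $n\geq 1$ and all $m$. Applying Lemma~\ref{lem:first-poly} to $\pQ^p_{m+n+1}(Y)$, substituting the inductive hypothesis for $\pQ^p_{m+n}(p^{-1}Y)$, and recognising $(1-Y)\pQ^p_m(p^{-1}Y)-Y=\pQ^p_{m+1}(Y)$ produces
\[
\pQ^p_{m+n+1}(Y)=(1-Y)\bigl(1+\pQ^p_m(p^{-1}Y)\bigr)\pQ^p_n(p^{-m-1}Y)+\pQ^p_{m+1}(Y).
\]
In parallel, expanding $\pQ^p_{n+1}(p^{-m}Y)$ by Lemma~\ref{lem:first-poly} and invoking the auxiliary identity gives
\[
(1+\pQ^p_m(Y))\pQ^p_{n+1}(p^{-m}Y)+\pQ^p_m(Y)=(1+\pQ^p_m(Y))(1-p^{-m}Y)\pQ^p_n(p^{-m-1}Y)+\pQ^p_{m+1}(Y).
\]
Matching these two expressions therefore reduces to the polynomial identity
\[
(1-Y)\bigl(1+\pQ^p_m(p^{-1}Y)\bigr)=(1+\pQ^p_m(Y))(1-p^{-m}Y),
\]
and both sides of this equation equal $1+\pQ^p_{m+1}(Y)$: the left-hand side by Lemma~\ref{lem:first-poly}, the right-hand side by the auxiliary identity.

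The only real obstacle is the auxiliary mirror recursion; once it is in place, the induction closes via two applications of Lemma~\ref{lem:first-poly} together with a one-line rearrangement. The rest is purely formal bookkeeping.
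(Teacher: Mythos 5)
Your argument is correct, but it takes a genuinely different route from the paper. The paper inducts on $m$ (with $n$ fixed), which is the variable for which Lemma~\ref{lem:first-poly} provides a recursion in exactly the form needed; the inductive step is then a one-pass rewriting that needs nothing beyond \eqref{qdefrec}. You instead induct on $n$, and to close both the base case and the step you must first establish the \emph{mirror} recursion $\pQ^p_{m+1}(Y)=(1-p^{-m}Y)\pQ^p_m(Y)-p^{-m}Y$, which is new content not present in the paper and requires its own induction (or the $p$-Pascal identity $\pchoose{m+1}{k}{p}=p^k\pchoose{m}{k}{p}+\pchoose{m}{k-1}{p}$, derivable from \eqref{pchrec} via the symmetry $\pchoose{n}{k}{p}=\pchoose{n}{n-k}{p}$). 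I have checked that the mirror recursion holds and that your inductive step goes through, with the key observation being that
\[
(1-Y)\bigl(1+\pQ^p_m(p^{-1}Y)\bigr)=1+\pQ^p_{m+1}(Y)=(1+\pQ^p_m(Y))\bigl(1-p^{-m}Y\bigr),
\]
where the two outer equalities come from Lemma~\ref{lem:first-poly} and the mirror recursion respectively. The trade-off is that the paper's proof is shorter and self-contained, while yours is longer and requires an auxiliary lemma, but it does surface the structurally pleasing fact that $\pQ^p_\bullet$ admits complementary left- and right-sided one-step recursions, and the factorisation $1+\pQ^p_{m+1}$ above is a neat by-product that the paper's proof never makes visible.
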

\begin{proof}
We prove the above formula  for arbitrary $n\in\N$ by induction on $m$.
The case $m=1$ is true by Lemma~\ref{lem:first-poly}.
For the inductive step, suppose that Equation~\eqref{qmneq} is satisfied for some $m>0$. Then using Lemma~\ref{lem:first-poly}
yields
\begin{align}
&(1+\pQ^p_{m+1}(Y))\pQ^p_{n}(p^{-(m+1)}Y)+\pQ^p_{m+1}(Y)\nonumber\\
&\qquad=(1+(1-Y)\pQ^p_m(p^{-1}Y)-Y)\pQ^p_n(p^{-1-m}Y)+(1-Y)\pQ^p_m(p^{-1}Y)-Y\nonumber\\
&\qquad=(1-Y)\left((1+\pQ^p_m(p^{-1}Y))\pQ^p_n(p^{-1-m}Y)+\pQ^p_m(p^{-1}Y)\right)-Y\nonumber\\
&\qquad=(1-Y)\pQ^p_{n+m}(p^{-1}Y)-Y\nonumber\\
&\qquad=\pQ^p_{n+m+1}(Y),
\end{align}
as desired.
\end{proof}

We now define  polynomials for all $\mu\in\Z$ by the formulae
\begin{equation}
\label{qzdef}
\qQ^p_\mu(Y)=\left\{\begin{array}{lcr}
\pQ^p_\mu(Y) & \text{if} & \mu>0\\
0          & \text{if} & \mu=0\\
\pQ^{p^{-1}}_{-\mu}(pY) &\text{if} & \mu<0
\end{array}\right.\;.
\end{equation}
Note that the polynomials $\qQ^p_{-\mu}$ for $\mu>0$ satisfy the  recursive relations
\begin{equation}
\label{qminrec}
\qQ^p_{-1}(Y)=-pY,\qquad
\qQ^p_{-\mu-1}(Y)=(1-pY)\qQ^p_{-\mu}(pY)-pY.
\end{equation}

\begin{lemma}
\label{lem:n=m}
The generators $x$ and $x^*$ of the quantum disc $\ffun(D_p)$ satisfy the relations
\begin{equation}
\label{xxsxsx}
x^\mu x^{-\mu}=1+\qQ^p_\mu(X),\quad \mu\in\Z, \quad x^{-n}:=x^{*n},
\quad n\in\N.
\end{equation}
\end{lemma}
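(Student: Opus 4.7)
The plan is induction on $|\mu|$, treating the positive and negative cases separately after disposing of the trivial case $\mu=0$, where both sides equal $1$.

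First I would record two auxiliary formulas that let us conjugate a polynomial in $X$ by $x$ or by $x^*$. Using the commutation relations from \eqref{1mxx} together with the restatement $xx^*=1-X$ and $x^*x=1-pX$ of \eqref{disc}, one checks on monomials $X^k$ and extends linearly that, for any polynomial $P$,
\begin{equation*}
x\,P(X)\,x^* \;=\; (1-X)\,P(p^{-1}X),\qquad x^*\,P(X)\,x \;=\; (1-pX)\,P(pX).
\end{equation*}

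For positive $\mu$ the induction is then immediate. The base case $\mu=1$ reads $xx^*=1-X=1+\pQ^p_1(X)$, and given $x^\mu x^{*\mu}=1+\pQ^p_\mu(X)$ the first auxiliary identity yields
\begin{equation*}
x^{\mu+1}x^{*(\mu+1)} \;=\; x\bigl(1+\pQ^p_\mu(X)\bigr)x^* \;=\; (1-X)\bigl(1+\pQ^p_\mu(p^{-1}X)\bigr),
\end{equation*}
which equals $1+\pQ^p_{\mu+1}(X)$ by the recursion in Lemma~\ref{lem:first-poly}. For negative $\mu=-n$ one can either run the parallel induction using the second auxiliary identity and the recursion \eqref{qminrec}, or, more economically, apply $\kappa_p^{-1}:\ffun(D_{p^{-1}})\to\ffun(D_p)$ of \eqref{kappa} to the positive-$\mu$ identity already established inside $\ffun(D_{p^{-1}})$. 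In the latter route, $\kappa_p^{-1}(x_-)=x^*$, $\kappa_p^{-1}(x_-^*)=x$ and $\kappa_p^{-1}(1-x_-x_-^*)=1-x^*x=pX$ transport $x_-^n x_-^{*n}=1+\pQ^{p^{-1}}_n(1-x_-x_-^*)$ into $x^{*n}x^n=1+\pQ^{p^{-1}}_n(pX)=1+\qQ^p_{-n}(X)$, where the final equality is the definition \eqref{qzdef}.

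The only real obstacle is bookkeeping of the $p$-shift: the $p^{-1}$ (resp.\ $p$) appearing in the first (resp.\ second) auxiliary identity must line up exactly with the shift in the argument of $\pQ^p_n$ dictated by the recursions in Lemma~\ref{lem:first-poly} and in \eqref{qminrec}. Once the two conjugation identities are in hand, both induction steps collapse to a single line of polynomial manipulation.
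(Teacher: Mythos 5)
Your proof is correct and follows essentially the same strategy as the paper's: induction on $|\mu|$, with the inductive step reducing via the commutation relations \eqref{1mxx} to the recursion of Lemma~\ref{lem:first-poly} (and its negative analogue \eqref{qminrec}). Your two auxiliary conjugation formulas $xP(X)x^*=(1-X)P(p^{-1}X)$ and $x^*P(X)x=(1-pX)P(pX)$ merely package the same rearrangement that the paper writes out inline as $x(1+\qQ^p_\mu(X))x^*=xx^*(1+\qQ^p_\mu(p^{-1}X))$. The one genuinely new wrinkle is your handling of $\mu<0$: where the paper simply asserts that the same induction works because the $p$-binomial identities hold for all $p>0$, you observe that the negative case can be \emph{deduced} from the positive one by pushing the identity through the $*$-isomorphism $\kappa_p^{-1}:\ffun^-(D_p)\cong\ffun(D_{p^{-1}})\to\ffun(D_p)$, with $\kappa_p^{-1}(1-x_-x_-^*)=1-x^*x=pX$ turning $\pQ^{p^{-1}}_n$ into $\qQ^p_{-n}$ exactly as in the definition \eqref{qzdef}. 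This is a slightly more economical and structurally transparent way to dispose of the negative case, but both arguments come down to the same recursion and the same bookkeeping of the $p$-shift.
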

\begin{proof}
We proceed by induction on $|\mu|$, and begin by observing that the 
formula~\eqref{xxsxsx} is immediately true for $\mu=0,\pm 1$. 
Suppose the 
formula is satisfied for some $\mu>0$. Then, using 
Lemma~\ref{lem:first-poly} yields
\begin{align}
x^{\mu+1}x^{*(\mu+1)}
&=x(x^\mu x^{*\mu })x^{*}\nonumber\\
&=x\left(
1+\qQ^p_\mu (X)
\right)x^{*}\nonumber\\
&=xx^{*}\left(
1+\qQ^p_\mu (p^{-1}X)
\right)\nonumber\\
&=(1-X)\left(1+\qQ^p_\mu (p^{-1}X)
\right)\nonumber\\
&=1+\qQ^p_{\mu +1}(X).
\end{align}
The proof for $\mu <0$ proceeds in the same way, because the identities for deformed factorials and binomial coefficients are the
same for $0<p<1$ and $p>1$.
\end{proof}

Let us define the  family $\qQ^p_{\mu;\nu}$ of polynomials, for all $\mu,\nu\in\Z$ by the formulae
\begin{equation}
\label{qmndef}
\qQ^p_{\mu;\nu}(Y)=\left\{
\begin{array}{lcl}
0 &\text{if}&\mu\nu\geq 0\\
\qQ^p_\mu(Y) & \text{if}& \mu\nu<0\text{\;and\;}|\mu|\leq |\nu|\\
\qQ^p_{-\nu}(p^{-(\mu+\nu)}Y) &\text{if} & \mu\nu<0\text{\;and\;}|\mu|>|\nu|
\end{array}
\right.\;.
\end{equation}
Now we can generalise Lemma~\ref{lem:n=m}.
\begin{lemma}
\label{lem:m-neq-n}
Let $X:=1-xx^*$, $\mu,\nu\in\Z$, and for 
$\mu<0$ write $x^\mu=x^{*|\mu|}$. Then the generators
of the algebra  $\ffun(D_p)$ satisfy
\begin{equation}
\label{xxsqgen}
x^\mu x^\nu=(1+Q^p_{\mu;\nu}(X))x^{\mu+\nu}.
\end{equation}
\end{lemma}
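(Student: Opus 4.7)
The plan is to prove the identity by a case analysis on the signs and relative magnitudes of $\mu$ and $\nu$, using Lemma~\ref{lem:n=m} together with the commutation rule $X^k x^n = p^{kn}x^n X^k$ from~\eqref{1mxx} (equivalently, $x^n X = p^{-n} X x^n$, and similarly $x^{*n} X = p^n X x^{*n}$ by self-adjointness of $X$). The definition of $Q^p_{\mu;\nu}$ in~\eqref{qmndef} is structured precisely so that each of the three clauses matches exactly one of the cases that arises below.

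First, if $\mu\nu \geq 0$, then $x^\mu$ and $x^\nu$ are both ordinary powers of $x$ or both powers of $x^*$, so $x^\mu x^\nu = x^{\mu+\nu}$ by our convention, which is the claim since $Q^p_{\mu;\nu}(X)=0$. Next, for $\mu\nu < 0$ with $|\mu|\leq|\nu|$, the product $x^{-\mu}x^{\mu+\nu}$ collapses to $x^\nu$ by the sign convention (both factors are of the same type), so one can insert $x^{-\mu}x^{\mu+\nu}=x^\nu$ to write
\[
x^\mu x^\nu = x^\mu x^{-\mu} x^{\mu+\nu} = \bigl(1+Q^p_\mu(X)\bigr)x^{\mu+\nu},
\]
invoking Lemma~\ref{lem:n=m}; this matches $Q^p_{\mu;\nu}(X)=Q^p_\mu(X)$.

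The remaining and most delicate case is $\mu\nu<0$ with $|\mu|>|\nu|$. Here one splits off $x^\nu x^{-\nu}$ from the right side of~$x^\mu$ instead: write $x^\mu = x^{\mu+\nu}\cdot x^{-\nu}$ (again using the sign convention so that the two factors on the right are of the same type), whence
\[
x^\mu x^\nu = x^{\mu+\nu}\bigl(x^{-\nu}x^\nu\bigr) = x^{\mu+\nu}\bigl(1+Q^p_{-\nu}(X)\bigr)
\]
by Lemma~\ref{lem:n=m}. One then commutes $x^{\mu+\nu}$ past the polynomial in $X$ using~\eqref{1mxx}, which produces the factor $p^{-(\mu+\nu)}$ in front of $X$ inside $Q^p_{-\nu}$. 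The result is $(1+Q^p_{-\nu}(p^{-(\mu+\nu)}X))x^{\mu+\nu}$, matching the third clause of~\eqref{qmndef}.

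The main obstacle is purely bookkeeping: keeping track of the sign convention $x^\mu := x^{*|\mu|}$ for $\mu<0$ when multiplying $x$'s and $x^*$'s, verifying that the factorizations $x^\nu = x^{-\mu}x^{\mu+\nu}$ and $x^\mu = x^{\mu+\nu}x^{-\nu}$ are valid (i.e.\ that the exponents have compatible signs), and getting the right power of $p$ when commuting $x^{\mu+\nu}$ through $X$ in the last case. Once the case split is set up correctly, each step reduces to one invocation of Lemma~\ref{lem:n=m} and at most one application of~\eqref{1mxx}.
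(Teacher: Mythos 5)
Your proposal is correct and follows essentially the same route as the paper's proof: the same trichotomy on $\mu\nu$, the same factorizations $x^\mu x^\nu = (x^\mu x^{-\mu})x^{\mu+\nu}$ and $x^\mu x^\nu = x^{\mu+\nu}(x^{-\nu}x^\nu)$ via Lemma~\ref{lem:n=m}, and the same commutation of $x^{\mu+\nu}$ past the polynomial in $X$ using~\eqref{1mxx} in the third case.
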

\begin{proof}
The statement is obvious if $\mu\nu\geq0$, and we now consider the two cases when $\mu\nu<0$. First, for 
$|\mu|\leq|\nu|$ we find 
\[
x^\mu x^\nu=(x^\mu x^{-\mu})x^{\mu+\nu}=(1+\qQ^p_\mu(X))x^{\mu+\nu}=(1+\qQ^p_{\mu;\nu}(X))x^{\mu+\nu}.
\]
Next, for  $|\mu|>|\nu|$  we obtain
\begin{equation*}
x^\mu x^\nu=x^{\mu+\nu}(x^{-\nu}x^\nu)=x^{\nu+\mu} (1+\qQ^p_{-\nu}(X))
=(1+\qQ^p_{-\nu}(p^{-(\nu+\mu)}X))x^{\nu+\mu}
=(1+\qQ^p_{\mu;\nu}(X))x^{\nu+\mu},
\end{equation*}
as needed.
\end{proof}

\subsection{Heegaard quantum lens spaces in terms of generators and relations}

In what follows we will frequently need the following  formula.
Let $x$ and $y$ be two elements in an algebra such that 
$xy=e^{i\phi}yx$, 
where $\phi\in\R$.
Then 
\begin{equation}
\label{chlemma}
x^\mu y^\mu=e^{i\phi\frac{\mu(\mu-1)}{2}}(xy)^\mu,\quad\text{for}\quad
 \mu\in \Z,
\quad x^{-n}:=(x^*)^n,\quad y^{-n}:=(y^*)^n,\quad n\in \N.
\end{equation}

We recall the coaction
of $\ffun(\Z/N\Z)$ on $\ffun(S^3_{pq\theta})$ from 
Subsection~\ref{subsub:lens}. The coaction-invariant subspace of this 
coaction is simply the subspace of elements of
degree divisible by $N$, and it is called a lens space $L^N_{pq\theta}$ 
of type $N$. It follows from \eqref{hbasis} that 
$\ffun(L^N_{pq\theta})$ is spanned as a vector space by the set
\begin{multline}\label{Lbasis}
\{A^ka^\mu b^\nu\;|\;k,\lambda,\mu,\nu\in\Z,\;k\geq 0,\;\mu+\nu =
\lambda N\}
\\ \cup 
\{B^ka^\mu b^\nu\;|\;k,\lambda,\mu,\nu\in\Z,\;k>0,\;\mu+\nu =\lambda N\},
\end{multline} 
where for $\mu,\nu<0$ we have written $b^\mu=b^{*|\mu|}$ and $a^\nu:=a^{*|\nu|}$ for brevity.
Let us also define 
\begin{equation}
\tilde{a}:=a^N,\quad \tilde{b}:=b^N,\quad z:=ab^*.
\end{equation}
It is not difficult to verify that the elements $A$, $B$, $z$, $\tilde{a}$, $\tilde{b}$ satisfy
 the  commutation relations 
\begin{subequations}
\label{lense}
\begin{gather}
A^*=A,\quad B^*=B,\quad AB=0,\quad
Az=pzA,\quad zB=qBz,\label{lensea}\\
z^*z=1-pA-B,\quad zz^*=1-A-qB,\label{lenseb}\\
A\tilde{a}=p^N\tilde{a}A,\;\;
A\tilde{b}=\tilde{b}A,\;\; 
B\tilde{a}=\tilde{a}B,\;\;
 B\tilde{b}=q^N\tilde{b}B,\;\; 
z\tilde{a}=e^{iN2\pi\theta}\tilde{a}z,\;\;
z\tilde{b}^*=e^{-iN2\pi\theta}\tilde{b}^*z,\label{lensec}\\
z\tilde{a}^*-e^{-iN2\pi\theta}\tilde{a}^*z=
e^{-iN(N+1)\pi\theta}(p^N-1)Az^{1-N}\tilde{b}^*,
\label{lensed}\\
zb-e^{iN2\pi\theta}bz=e^{i\pi\theta N(N-1)}q(q^{-N}-1)Bz^{1-N}\tilde{a},\label{lensee}\\
\tilde{a}\tilde{b}=e^{iN^22\pi\theta}\tilde{b}\tilde{a},\quad \tilde{a}\tilde{b}^*=e^{-iN^22\pi\theta}\tilde{b}^*\tilde{a},
\quad \tilde{a}\tilde{b}^*=e^{-i\pi\theta N(N-1)}z^N,\label{lensef}\\
\tilde{a}^*\tilde{a}=1+\qQ^p_{-N}(A),
\quad \tilde{a}\tilde{a}^*=1+\qQ^p_{N}(A),\quad
\tilde{b}^*\tilde{b}=1+\qQ^q_{-N}(B),
\quad \tilde{b}\tilde{b}^*=1+\qQ^q_{N}(B).\label{lenseg}
\end{gather}
\end{subequations}

Here  the polynomials $\qQ^p_\mu$ were defined in Equation~\eqref{qzdef}.
Formulas~\eqref{lensea}-\eqref{lensec} and the first two equations in \eqref{lensef}
are straightforward consequences of Equations~\eqref{heegard}. In order to prove the
 last equality in~\eqref{lensef}, we use~\eqref{chlemma}. Equalities~\eqref{lenseg}
follow immediately from Lemma~\ref{lem:n=m}. 
In order to prove Equation~\eqref{lensed},  we need to do a little work.
First we note that for all $n>0$ we have
\begin{equation}\label{aaminus}
aa^{*n}-a^{*n}a=(p^n-1)Aa^{*(n-1)}.
\end{equation}
Indeed, this formula holds for $n=1$, and  for $n>1$ we can write
\begin{align}
aa^{*n}-a^{*n}a&=(aa^{*})a^{*(n-1)}-a^{*(n-1)}(a^{*}a)\nonumber\\
&=(1-A)a^{*(n-1)}-a^{*(n-1)}(1-pA)\nonumber\\
&=-Aa^{*(n-1)}+pp^{n-1}Aa^{*(n-1)}\nonumber\\
&=(p^n-1)Aa^{*(n-1)}.
\end{align}
Now we are ready to prove Equation~\eqref{lensed}.
Using Equation~\eqref{aaminus} in step $(a)$ and 
Lemma~\ref{lem:m-neq-n} in step~$(b)$, we compute
\begin{align}
z\tilde{a}^*-e^{-iN2\pi\theta}\tilde{a}^*z
&=ab^{-1}a^{-N}-e^{-iN2\pi\theta}a^{-N}ab^{-1}\nonumber\\
&=e^{-iN2\pi\theta}(aa^{-N}-a^{-N}a)b^{-1}\nonumber\\
&\stackrel{(a)}{=}e^{-iN2\pi\theta}(p^N-1)Aa^{1-N}b^{-1}\nonumber\\
&\stackrel{(b)}{=}e^{-iN2\pi\theta}(p^N-1)Aa^{1-N}\left(b^{N-1}b^{-N}-\qQ^q_{N-1;-N}(B)b^{-1}\right).
\end{align}
Next, we use the fact that $A\qQ^q_{N-1;-N}(B)=0$ (due to $AB=0$)
and  commutation relations for $A$ to obtain
\begin{align}
z\tilde{a}^*-e^{-iN2\pi\theta}\tilde{a}^*z&=e^{-iN2\pi\theta}(p^N-1)A(a^{1-N}b^{N-1})b^{-N}\nonumber\\
&=e^{-iN2\pi\theta-i(N-1)^22\pi\theta}(p^N-1)A(a^{N-1}b^{-(N-1)})^*b^{-N}\nonumber\\
&=e^{-iN2\pi\theta-i(N-1)^22\pi\theta+i\pi\theta (N-1)(N-2)}
      (p^N-1)A((ab^*)^{N-1})^*b^{-N}\nonumber\\
&=e^{-iN(N+1)\pi\theta}(p^N-1)Az^{1-N}\tilde{b}^*.
\end{align}
Here the second last equality follows from~\eqref{chlemma}.
The proof of Equation~\eqref{lensee} is similar.

We are now ready for the main claim of this subsection.
\begin{theorem}
\label{thm:basis}
Let  $\mathcal{A}$ be the universal $*$-algebra generated by the elements $\tilde{a}'$, $\tilde{b}'$,
$z'$, $A'$ and~$B'$, and satisfying the same relations~\eqref{lense} as their unprimed counterparts. Then  $\ffun(L^N_{pq\theta})$ and
 $\mathcal{A}$ are isomorphic as $*$-algebras, and the set of vectors 
\begin{equation}
\label{lensbasis}
{\mathcal B}:=\{(A')^k(z')^\mu(\tilde{b}')^\nu\;|\;k>0,\ \mu,\nu\in\Z\}\cup\{(B')^{k}(z')^{\mu}(\tilde{a}')^{\nu}\;|\;k\geq 0,\ \mu,\nu\in\Z\}
\end{equation}
is a basis of ${\mathcal A}$. Here  for $\mu,\nu<0$ we have written $(\tilde{b}')^\mu=(\tilde{b}')^{*|\mu|}$, $(\tilde{a}')^\nu:=(\tilde{a}')^{*|\nu|}$
and $(z')^\mu=(z')^{*|\mu|}$ for brevity.
\end{theorem}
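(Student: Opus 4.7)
The plan is to: (i) obtain a $*$-homomorphism $\phi:\mathcal{A}\to\ffun(L^N_{pq\theta})$ from the universal property of $\mathcal{A}$; (ii) show that $\mathcal{B}$ spans $\mathcal{A}$ by a systematic rewriting procedure based on the relations~\eqref{lense}; (iii) establish linear independence of $\phi(\mathcal{B})$ in $\ffun(L^N_{pq\theta})$ by computing each image explicitly in terms of the already-known basis~\eqref{Lbasis}; and thereby conclude simultaneously that $\mathcal{B}$ is a basis of $\mathcal{A}$ and that $\phi$ is a $*$-isomorphism.

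For (i), the verifications of~\eqref{lensea}--\eqref{lenseg} carried out in the paragraphs preceding the theorem show that the unprimed elements $A,B,z,\tilde{a},\tilde{b}\in\ffun(L^N_{pq\theta})$ satisfy~\eqref{lense}, so the universal property provides a well-defined $*$-homomorphism $\phi$ sending each primed generator to its unprimed counterpart. Surjectivity of $\phi$ will follow a posteriori from the spanning argument below, since $\phi(\mathcal{B})$ will be seen to span~\eqref{Lbasis}.

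For (ii), I will argue by successive rewriting. Relations~\eqref{lensea}--\eqref{lensec} allow any monomial to be rearranged so that all occurrences of $A'$ and $B'$ are collected on the left up to scalar factors, and $A'B'=0$ kills any monomial in which both appear. Hence every monomial reduces to one living in either the \emph{$A'$-sector} (leading $A'^k$, $k>0$) or the \emph{$B'$-sector} (leading $B'^k$, $k\geq 0$, allowing also monomials containing neither $A'$ nor $B'$). Within each sector, three moves reduce the remaining word in $\tilde{a}',(\tilde{a}')^*,\tilde{b}',(\tilde{b}')^*,z',(z')^*$ to the desired form: the identities in~\eqref{lenseg} (coming from Lemma~\ref{lem:n=m}) collapse any $(\tilde{a}')^*\tilde{a}'$ or $\tilde{a}'(\tilde{a}')^*$ product into a constant plus a polynomial in $A'$ with no constant term (and analogously $\tilde{b}'$ in $B'$); the identity $\tilde{a}'(\tilde{b}')^*=e^{-i\pi\theta N(N-1)}(z')^N$ from~\eqref{lensef}, together with its adjoint, eliminates any adjacent cross-product of $\tilde{a}'$ with $(\tilde{b}')^*$ in favour of a power of $z'$; and~\eqref{lensed}--\eqref{lensee} let $z'$ pass $(\tilde{a}')^*$ and $\tilde{b}'$ modulo correction terms of the form $A'(z')^{1-N}(\tilde{b}')^*$ and $B'(z')^{1-N}\tilde{a}'$ that live in the opposite sector. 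Iterating these moves lands in $\mathcal{B}$; termination is guaranteed by a weighted measure assigning higher weight to $\tilde{a}'/\tilde{b}'$-occurrences than to $z'$-occurrences.

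For (iii), I will write each $\phi$-image explicitly. Applying~\eqref{chlemma} to expand $z^\mu=e^{i\pi\theta\mu(\mu-1)}a^\mu(b^*)^\mu$ (and its adjoint analogue for $\mu<0$), using the commutations $Aa=paA$, $Ab=bA$, $Ba=aB$, $Bb=qbB$, and invoking the absorption $A^kb^{*\mu}b^\mu=A^k$ (resp.\ $B^ka^{*\nu}a^\nu=B^k$) for $k>0$---which follows from Lemma~\ref{lem:n=m} together with $AB=0$ since the relevant polynomial has zero constant term---each image $\phi\bigl((A')^k(z')^\mu(\tilde{b}')^\nu\bigr)$ reduces to a nonzero scalar multiple of a single element of~\eqref{Lbasis}, and similarly in the $B'$-sector. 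Different triples $(k,\mu,\nu)$ hit different basis elements of~\eqref{Lbasis}, so $\phi|_{\mathcal{B}}$ is injective with linearly independent image. Combined with the spanning result, $\mathcal{B}$ is a basis of $\mathcal{A}$; since $\phi$ bijects a basis onto a basis (after reindexing it is seen to cover all of~\eqref{Lbasis}), $\phi$ is a $*$-isomorphism.

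The main obstacle is the reduction argument in (ii): the correction terms produced by~\eqref{lensed}--\eqref{lensee} reintroduce the very cross-products $(\tilde{b}')^*$ and $\tilde{a}'$ one has just eliminated, and at first glance threaten to prevent termination. The key observation is that every such correction carries a leading $A'$ or $B'$ factor and therefore lives in the opposite sector to its offending cross-product; within a fixed sector, $A'B'=0$ then kills it. Making this sector-bookkeeping precise---and choosing the right termination order---is where the real technical work lies.
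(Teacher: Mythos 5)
Your strategy (obtain the $*$-homomorphism from universality, show $\mathcal{B}$ spans $\mathcal{A}$, then establish linear independence by computing images in terms of~\eqref{Lbasis}) matches the paper, but two steps have genuine gaps. In (ii), your three moves cannot reduce a monomial such as $\tilde{a}'\tilde{b}'$: there is no $*$-cross-product for move~2, no $\tilde{a}'^*\tilde{a}'$ or $\tilde{a}'\tilde{a}'^*$ for move~1, and no $z'$ for move~3. The essential missing move is the insertion
\begin{equation*}
\tilde{b}'=\tilde{b}'\bigl(\tilde{a}'^*\tilde{a}'-Q^p_{-N}(A')\bigr)
=e^{i\pi\theta N(N-1)}(z')^{*N}\tilde{a}'-Q^p_{-N}(A')\tilde{b}',
\end{equation*}
which trades $\tilde{b}'$ for $(z')^{*N}\tilde{a}'$ modulo $A'$-sector terms, together with its counterpart~\eqref{multplse} for eliminating $\tilde{a}'$ inside the $A'$-sector. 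Even with these added, your proposed termination measure (higher weight on $\tilde{a}',\tilde{b}'$ than on $z'$) does not decrease under this substitution (three heavy generators become three), so the ordering needs rethinking. The paper sidesteps rewriting altogether: it proves the explicit product formulas~\eqref{multpls}, shows the generators lie in the span $\mathcal{V}$ of $\mathcal{B}$ using the displayed identity, and then verifies directly that $\mathcal{V}$ is closed under multiplication.

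In (iii), the claim that each image is a nonzero multiple of a \emph{single} element of~\eqref{Lbasis} fails for the $k=0$ part of the $B'$-sector: the absorption $B^k a^{*\nu}a^\nu=B^k$ requires $k>0$, and for $k=0$ one computes $\phi\bigl((z')^\mu(\tilde{a}')^\nu\bigr)=e^{i2\pi\theta(\cdots)}\bigl(1+Q^p_{\mu;N\nu}(A)\bigr)a^{\mu+N\nu}b^{-\mu}$, which expands in~\eqref{Lbasis} as $a^{\mu+N\nu}b^{-\mu}$ \emph{plus} terms $A^ka^{\mu+N\nu}b^{-\mu}$ with $k\geq 1$. Those correction terms collide with the images of $A'$-sector elements, so ``different triples hit different basis elements'' is false as stated. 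Linear independence still holds, but by the paper's triangular argument: the $B$-sector coefficients vanish first (unique support on $B^k$, $k>0$), then the $k=0$ coefficients (by isolating the $k=0$ part of~\eqref{Lbasis}), and only then the $A$-sector coefficients.
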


The proof of this theorem will occupy the remainder of this section. Until the final stage of the proof,  we will abuse
notation by dropping the primes on the generators of ${\mathcal A}$. First we will prove some additonal commutation relations.
\begin{lemma}
Let $\mu,\nu\in\Z$, and  for $\mu,\nu<0$  write 
$\tilde{b}^\mu=\tilde{b}^{*|\mu|}$ and 
$\tilde{a}^\nu:=\tilde{a}^{*|\nu|}$. 
Then we have the relations 
\begin{subequations}
\label{multpls}
\begin{gather}
z^\mu z^\nu=(1+\qQ^p_{\mu;\nu}(A)+\qQ^q_{-\mu;-\nu}(B))z^{\mu+\nu},\label{multplsa}\\
\tilde{a}^\mu\tilde{a}^\nu=(1+\qQ^p_{N \mu;N \nu}(A))\tilde{a}^{\mu+\nu},\label{multplsb}\\
\tilde{b}^\mu\tilde{b}^\nu=(1+\qQ^q_{N \mu;N \nu}(B))\tilde{b}^{\mu+\nu},\label{multplsc}\\
A\tilde{a}^\nu=e^{-i\pi\theta N \nu(N \nu-1)}Az^{N \nu}\tilde{b}^\nu,\label{multplse}\\
Az^\nu\tilde{b}^\mu=e^{iN2\pi\theta \mu\nu}A\tilde{b}^\mu z^\nu,\label{multplsi}
\end{gather}
where the polynomials $\qQ^p_{\mu;\nu}$ were defined in~\eqref{qmndef}.
\end{subequations}
\end{lemma}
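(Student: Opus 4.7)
The overall strategy is induction on the exponents, mirroring the proofs of Lemmas~\ref{lem:n=m}--\ref{lem:m-neq-n} for the quantum disc but with the polynomial indices scaled by~$N$. The essential ingredients are the recursion of Lemma~\ref{lem:first-poly}, the composition law~\eqref{qmneq}, and the orthogonality $AB=0$. Since every polynomial $\qQ^p_\mu$ and $\qQ^p_{\mu;\nu}$ appearing has zero constant term, $AB=0$ forces $A\cdot\qQ^q(B)=0=B\cdot\qQ^p(A)$ for any such polynomial, a fact we will use repeatedly.

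For~\eqref{multplsb}, I would first establish the diagonal identity $\tilde{a}^\mu\tilde{a}^{-\mu}=1+\qQ^p_{N\mu}(A)$ by induction on $|\mu|$. The inductive step runs exactly as in Lemma~\ref{lem:n=m}: commute $\tilde{a}$ past polynomials in $A$ via $\tilde{a}A^k=p^{-Nk}A^k\tilde{a}$, apply $\tilde{a}\tilde{a}^*=1+\qQ^p_N(A)$, and close by invoking~\eqref{qmneq} with $m=N$, $n=N|\mu|$. The off-diagonal formula for $\mu\nu<0$ then follows by splitting $\tilde{a}^\mu\tilde{a}^\nu=\tilde{a}^\mu\tilde{a}^{-\mu}\tilde{a}^{\mu+\nu}$ (or the symmetric split when $|\mu|>|\nu|$), exactly as in Lemma~\ref{lem:m-neq-n}. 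The identity~\eqref{multplsc} is proved verbatim with $(p,A,\tilde{a})$ replaced by $(q,B,\tilde{b})$.

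Identity~\eqref{multplsa} is the most substantive. The key viewpoint is that $z$ behaves like a quantum-disc generator with parameter $p$ relative to $A$ (seen in $Az=pzA$ together with the $A$-parts of $z^*z=1-pA-B$ and $zz^*=1-A-qB$), while, dually, $z^*$ behaves like a disc generator with parameter $q$ relative to $B$ (using $zB=qBz$ and the $B$-parts of the same relations). Because $AB=0$, all cross products vanish, so the $A$- and $B$-contributions never interact in the induction and~\eqref{multplsa} follows from two parallel inductions, each of the shape of Lemma~\ref{lem:m-neq-n}.

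For~\eqref{multplse} and~\eqref{multplsi} I would handle the base cases first. The case $\nu=1$ of~\eqref{multplse} comes directly from the last equality in~\eqref{lensef}, namely $z^N=e^{i\pi\theta N(N-1)}\tilde{a}\tilde{b}^*$: multiplying on the right by $A\tilde{b}$ and collapsing $A\tilde{b}^*\tilde{b}=A(1+\qQ^q_{-N}(B))=A$ yields the formula. The base case $\mu=\nu=1$ of~\eqref{multplsi} is analogous: starting from $z\tilde{b}^*=e^{-iN2\pi\theta}\tilde{b}^*z$, multiply on the right by $\tilde{b}$ and on the left by $A$, then absorb $\tilde{b}^*\tilde{b}$ in the same way. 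The general cases follow by iterating using the already-established~\eqref{multplsa}, \eqref{multplsb} and~\eqref{multplsc}. The main bookkeeping obstacle throughout is tracking the phases, but these are entirely dictated by~\eqref{lensef} and~\eqref{chlemma} with no real choice involved.
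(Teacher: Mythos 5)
Your approach to \eqref{multplsa}--\eqref{multplsc} is exactly the paper's: prove the diagonal identities $z^nz^{*n}$, $z^{*n}z^n$, $\tilde{a}^n\tilde{a}^{*n}$, etc.\ by induction using the recursions \eqref{qdefrec}, \eqref{qminrec} and the composition law \eqref{qmneq}, then split into cases as in Lemma~\ref{lem:m-neq-n}. Your observation that $AB=0$ decouples the $A$- and $B$-parts, so that \eqref{multplsa} reduces to two non-interacting disc-type inductions, is precisely the mechanism the paper exhibits in the step $(1-A-qB)(1+\qQ^p_n(p^{-1}A)+\qQ^q_{-n}(qB))=1+\qQ^p_{n+1}(A)+\qQ^q_{-(n+1)}(B)$.

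For \eqref{multplse} your base case $\nu=1$ and the absorption $A\tilde{b}^*\tilde{b}=A$ are fine, but your route to the general case differs from the paper and is also more fragile than you acknowledge. You propose to iterate ``using the already-established \eqref{multplsa}, \eqref{multplsb} and \eqref{multplsc},'' but when you pass from $A\tilde{a}^\nu$ to $A\tilde{a}^{\nu+1}$ and try to reorder the resulting monomial $Az^{N}\tilde{b}\,z^{N\nu}\tilde{b}^\nu$ into the form $Az^{N(\nu+1)}\tilde{b}^{\nu+1}$, you must commute $\tilde{b}$ past a power of $z$ in the presence of the prefactor $A$ --- that is, you need \eqref{multplsi}, which is not on your list. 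So \eqref{multplsi} must be established before \eqref{multplse}, not in parallel. The paper avoids this entirely: it applies \eqref{chlemma} to the $q$-commuting pair $\tilde{a},\tilde{b}^*$ to get the closed formula $\tilde{a}^\mu\tilde{b}^{-\mu}=e^{-i\pi\theta N\mu(N\mu-1)}z^{N\mu}$ for all $\mu$ at once (Equation~\eqref{multpleq}), and then obtains \eqref{multplse} in one line from $A\tilde{a}^\mu=A(\tilde{a}^\mu\tilde{b}^{-\mu})\tilde{b}^\mu$ plus $A\cdot\qQ^{q}_{-\mu N;\mu N}(B)=0$. This direct computation is both shorter and free of the hidden dependency. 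Similarly, for \eqref{multplsi} the paper reads the claim off from \eqref{lensec} and \eqref{lensee} by noting the side terms are proportional to $B$; your derivation of the base case from $z\tilde{b}^*=e^{-iN2\pi\theta}\tilde{b}^*z$ works but needs one more step than you wrote (after absorbing $\tilde{b}^*\tilde{b}$ you obtain $A\tilde{b}^*z\tilde{b}=e^{iN2\pi\theta}Az$, and you must then multiply on the left by $\tilde{b}$ and absorb $\tilde{b}\tilde{b}^*$ against $A$ to reach $Az\tilde{b}=e^{iN2\pi\theta}A\tilde{b}z$).
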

\begin{proof}
We prove  each of the Equations~\eqref{multpls} separately.
For Equation~\eqref{multplsa}, we first  prove, by induction, the simpler result
\begin{equation}
\label{multplshelp}
z^nz^{*n}=1+\qQ^p_n(A)+\qQ^q_{-n}(B),\qquad z^{*n}z^n=1+\qQ^p_{-n}(A)+\qQ^p_n(B).
\end{equation}
Equations~\eqref{multplshelp} is clearly satisfied for $n=0, 1$ by Equations~\eqref{lenseb} and \eqref{qzdef}.
We will prove the first equality, the second being proved similarly. So suppose that $n>0$. Then
\begin{align}
z^{n+1}z^{*(n+1)}&=z(z^nz^{*n})z^{*}\nonumber\\
&=z\left(1+\qQ^p_n(A)+\qQ^q_{-n}(B)\right)z^{*}\nonumber\\
&=zz^{*}\left(1+\qQ^p_n(p^{-1}A)+\qQ^q_{-n}(qB)\right)\nonumber\\
&=(1-A-qB)\left(1+\qQ^p_n(p^{-1}A)+\qQ^q_{-n}(qB)\right)\nonumber\\
&=1+\left((1-A)\qQ^p_n(p^{-1}A)-A\right)+\left((1-qB)\qQ^q_{-n}(qB)-qB\right)\nonumber\\
&=1+\qQ^p_{n+1}(A)+\qQ^q_{-(n+1)}(B),
\end{align}
where in the last equality we used the recursive relations~\eqref{qdefrec} and \eqref{qminrec}.
Now Equation~\eqref{multplsa} clearly holds when $\mu\nu\geq 0$.
When $\mu\nu<0$ there are four cases, and we will show how the proof works in a single instance. Suppose then, 
that $|\mu|\leq |\nu|$ and $\nu<0$. Then using~\eqref{multplshelp}
and \eqref{qmndef} we obtain
\begin{align}
z^{|\mu|} z^{*|\nu|}&=(z^{|\mu|} z^{*|\mu|})z^{*(|\nu|-|\mu|)}\nonumber\\
&=(1+\qQ^p_\mu(A)+\qQ^q_{-\mu}(B))z^{*(|\nu|-|\mu|)}\nonumber\\
&=(1+\qQ^p_{\mu;\nu}(A)+\qQ^q_{-\mu;-\nu}(B))z^{*(|\nu|-|\mu|)}.
\end{align}

Equations~\eqref{multplsb} and \eqref{multplsc} are proved in the same way, and we just prove Equation~\eqref{multplsb}. 
First we prove the formulae
\begin{equation}
\label{multbh}
\tilde{a}^n\tilde{a}^{*n}=1+\qQ^p_{N n}(A),\qquad \tilde{a}^{*n}\tilde{a}^{n}=1+\qQ^p_{-N n}(A),
\end{equation}
by induction. By \eqref{lenseg}, these formulae are true for $n=0, 1$.
For $n>0$ we use
formula~\eqref{qmneq} and Equation~\eqref{qzdef} to find
\begin{align}
\tilde{a}^{n+1}\tilde{a}^{*(n+1)}&=\tilde{a}\tilde{a}^n\tilde{a}^{*n}\tilde{a}^{*}\nonumber\\
&=\tilde{a}(1+\qQ^p_{N n}(A))\tilde{a}^{*}\nonumber\\
&=\tilde{a}\tilde{a}^{*}(1+\qQ^p_{N n}(p^{-N}A))\nonumber\\
&=(1+\qQ^p_N(A))(1+\qQ^p_{N n}(p^{-N}A))\nonumber\\
&=1+\qQ^p_{N(n+1)}(A).
\end{align}
Similarly 
\[
\tilde{a}^{*(n+1)}\tilde{a}^{n+1}=\tilde{a}^*(1+\qQ^p_{-nN}(A))\tilde{a}=(1+\qQ^p_{-N}(A))(1+\qQ^p_{-nN}(p^NA))=1+\qQ^p_{-N(n+1)}(A).
\]
Now the full formula~\eqref{multplsb} can be easily proven 
using \eqref{multbh} and \eqref{qmndef} by
applying similar methods to those used to prove
Equation~\eqref{multplsa}.


To prove Equation~\eqref{multplse}, first note that, as a direct
consequence of \eqref{chlemma} and \eqref{lensef}, we obtain 
\begin{equation}\label{multpleq}
\tilde{a}^\mu\tilde{b}^{-\mu}=e^{-i\pi\theta N \mu(N \mu -1)}z^{N \mu},
\quad\tilde{a}^{-n}:=\tilde{a}^{*n},
\quad\tilde{b}^{-n}:=\tilde{b}^{*n},
\quad\mu,n\in\Z,\quad n>0.
\end{equation}
Then Equation~\eqref{multplsc} and Equation~\eqref{multpleq} yield
\begin{equation}
A\tilde{a}^\mu =A\tilde{a}^\mu \tilde{b}^{-\mu }\tilde{b}^\mu -A\tilde{a}^\mu \qQ^{q}_{-\mu N;\mu N}(B)
=A(\tilde{a}^\mu \tilde{b}^{-\mu })\tilde{b}^\mu -0=
e^{-i\pi\theta N \mu (N \mu -1)}Az^{N \mu }\tilde{b}^\mu .
\end{equation}

Finally, Equation~\eqref{multplsi} follows directly from the commutation relations 
between $\tilde{b}^{\pm 1}$ and $z^{\pm 1}$ 
(Equations~\eqref{lensec}, \eqref{lensee}) and the fact that the additional term which might appear as a side 
effect of commuting $\tilde{b}^{\pm 1}$ with $z^{\pm 1}$ is proportional to $B$.
\end{proof}

Let $\mathcal{V}$ be the linear subspace of $\mathcal{A}$ 
spanned by ${\mathcal B}$.
Our aim is to show first that the generators of ${\mathcal A}$ belong
to ${\mathcal V}$. Then, using some additional commutation
relations, we will show that ${\mathcal V}$ is closed under 
multiplication, and hence 
equal to ${\mathcal A}$. Finally, we will argue that 
${\mathcal A}$  is isomorphic to $\ffun(L^N_{pq\theta})$.
First we prove that  $\tilde{b},\,\tilde{b}^*\in {\mathcal V}$.
Equations~\eqref{lense} allow us to write
\begin{equation}
\tilde{b}=\tilde{b}(\tilde{a}^*\tilde{a}-\qQ^p_{-N}(A))
=e^{i\pi\theta N(N-1)}z^{*N}\tilde{a}-\qQ^p_{-N}(A)\tilde{b}.
\end{equation}
Similarily
$\tilde{b}^*=\tilde{b}^*(\tilde{a}\tilde{a}^*-\qQ^p_N(A))
=e^{i\pi\theta N(N+1)}z^N\tilde{a}^*
-\qQ^p_N(A)\tilde{b}^*$,
which completes the argument, since $\qQ^p_N(A)$ has no constant term.

The previous argument, along with the definition of ${\mathcal V}$, shows that all the generators $\tilde{a}$, $\tilde{b}$, $z$, $A$, $B$,
are contained in ${\mathcal V}$. Thus to show that ${\mathcal V}=
{\mathcal A}$ we just need to prove that ${\mathcal V}$ is closed 
under multiplication.
To this end, let us denote for brevity the following linear subspaces of $\mathcal{V}$:
\begin{subequations}
\begin{gather}
\mathcal{V}_A:=\Span\{A^kz^\mu\tilde{b}^\nu\;|\;k,\mu,\nu\in\Z,\;k>0\},\\
\mathcal{V}_0:=\Span\{z^\mu\tilde{a}^\nu\;|\;\mu,\nu\in\Z\},\quad
\mathcal{V}_B:=\Span\{B^kz^\mu\tilde{a}^\nu\;|\;k,\mu,\nu\in\Z,\;k>0\},\\
\mathcal{W}:=\Span(\mathcal{V}_0\cup\mathcal{V}_B).
\end{gather}
\end{subequations}
Here  for $\mu,\nu<0$ we have written $\tilde{b}^\mu=\tilde{b}^{*|\mu|}$, $\tilde{a}^\nu:=\tilde{a}^{*|\nu|}$
and $z^\mu=z^{*|\mu|}$ for brevity. The relation between these subspaces and ${\mathcal V}$ is
\[
{\mathcal V}=\Span({\mathcal V}_A\cup {\mathcal W})=\Span({\mathcal V}_A\cup {\mathcal V}_0\cup {\mathcal V}_B).
\]

\begin{lemma}\label{muldlem}
For all $\nu\in \Z$, we have the inclusions
\begin{subequations}
\label{muld}
\begin{gather}
\mathcal{V}_A\tilde{b}^\nu\subseteq\mathcal{V}_A,\quad \tilde{b}^\nu\mathcal{V}_A\subseteq\mathcal{V}_A,\label{muldb}\\
\mathcal{V}_Az^\nu\subseteq\mathcal{V}_A,\quad z^\nu\mathcal{V}_A\subseteq\mathcal{V}_A,\label{muldz}\\
\mathcal{V}_A\tilde{a}^\nu\subseteq\mathcal{V}_A,\quad \tilde{a}^\nu\mathcal{V}_A\subseteq \mathcal{V}_A.\label{mulda}
\end{gather}
\end{subequations}
\end{lemma}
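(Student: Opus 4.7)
I would take a typical spanning element $A^k z^\mu \tilde b^\lambda$ of $\mathcal{V}_A$ (with $k\geq 1$ and $\mu,\lambda\in\Z$), multiply it on the left or right by $\tilde b^\nu$, $z^\nu$ or $\tilde a^\nu$, and rewrite the result as a finite linear combination of elements of the form $A^{k'}z^{\mu'}\tilde b^{\lambda'}$ with $k'\geq 1$, using only the relations \eqref{lense} and their consequences \eqref{multpls} proved just above. Three guiding observations drive the reduction: \textbf{(i)} since $AB=0$, any polynomial in $B$ with zero constant term is killed by $A^k$ on either side, after first commuting it past any intervening $z^\mu$ via $z^\mu B=q^{\mu}B z^\mu$; \textbf{(ii)} $A$ quasi-commutes with each of $z,\tilde b,\tilde a$ (and their adjoints) up to a nonzero scalar, so $A^k$ can be shuffled freely through any word in these generators, at the cost only of a scalar; \textbf{(iii)} by \eqref{multplse}, $A\tilde a^\nu=e^{-i\pi\theta N\nu(N\nu-1)}Az^{N\nu}\tilde b^\nu$, so whenever $\tilde a^\nu$ stands next to a factor of $A$---which, by (ii), we may always arrange---it may be replaced by a scalar multiple of $z^{N\nu}\tilde b^\nu$.

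\textbf{Casework.} The six inclusions in \eqref{muld} are then handled as follows. For $\mathcal{V}_A\tilde b^\nu\subseteq\mathcal{V}_A$, apply \eqref{multplsc} to collapse $\tilde b^\lambda \tilde b^\nu$ and discard the resulting $B$-polynomial by observation (i). For $\tilde b^\nu\mathcal{V}_A\subseteq\mathcal{V}_A$, first commute $\tilde b^\nu$ past $A^k$ exactly (since $A\tilde b=\tilde bA$), then use \eqref{multplsi} in the form $A\tilde b^\nu z^\mu=e^{-iN2\pi\theta\,\nu\mu}Az^\mu\tilde b^\nu$ to swap $\tilde b^\nu$ and $z^\mu$ in the presence of $A$, and finish with the previous case. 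For $\mathcal{V}_A z^\nu$ and $z^\nu\mathcal{V}_A$, use \eqref{multplsi} to pull $\tilde b^\lambda$ to the right of $z^\nu$, then combine the two $z$-factors by \eqref{multplsa}: the $B$-part dies by (i), and the $A$-part merely raises the power of $A$ while keeping the expression in $\mathcal{V}_A$. Finally, for $\mathcal{V}_A\tilde a^\nu$ and $\tilde a^\nu\mathcal{V}_A$, observation (iii) converts $\tilde a^\nu$ into a scalar multiple of $z^{N\nu}\tilde b^\nu$, reducing these inclusions to the $z$- and $\tilde b$-cases already handled.

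\textbf{Main obstacle.} The proof itself is bookkeeping-heavy rather than conceptually hard: one must track scalar prefactors carefully and treat positive and negative exponents separately because of the convention $x^{-n}=x^{*n}$. The one genuine subtlety is the $\tilde a$-case, where one must \emph{first} peel off an $A$ from the ambient $A^k$ before invoking \eqref{multplse}; this is legitimate precisely because the definition of $\mathcal{V}_A$ guarantees $k\geq 1$. A secondary point of care is that $z$ and $\tilde a^*$ fail to quasi-commute exactly---the defect in \eqref{lensed} is an $A$-term---but multiplying by $A^k$ simply increases the power of $A$, so the error term lands back in $\mathcal{V}_A$ rather than escaping it.
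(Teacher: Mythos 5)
Your proposal is correct and follows essentially the same route as the paper: reduce to spanning vectors $A^kz^\sigma\tilde b^\tau$, handle $\tilde b^\nu$ via \eqref{multplsc}, handle $z^\nu$ via \eqref{multplsi} and \eqref{multplsa}, and convert $\tilde a^\nu$ to $z^{N\nu}\tilde b^\nu$ via \eqref{multplse} after placing an $A$ next to it. Your ``secondary point of care'' about the $A$-defect in \eqref{lensed} is harmless but superfluous: once you peel off an $A$ and apply observation~(iii), no commutation of $z$ past $\tilde a^*$ is ever needed.
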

\begin{proof}
It is enough to consider an arbitrary vector from the set spanning $\mathcal{V}_A$, namely
$A^kz^\sigma\tilde{b}^\tau$, where  $k,\sigma,\tau\in\Z$ and $k>0$.
It follows from Equation~\eqref{multplsc} that
$A^kz^\sigma\tilde{b}^\tau\tilde{b}^\nu=A^kz^\sigma\tilde{b}^{\tau+\nu}\in\mathcal{V}_A$. Similarly, using equations \eqref{multplsi}
and \eqref{multplsc}, we obtain
\begin{equation}
\tilde{b}^\nu A^kz^\sigma\tilde{b}^\tau=A^k\tilde{b}^\nu z^\sigma\tilde{b}^\tau=e^{-iN2\pi\theta \nu\sigma}A^kz^\sigma\tilde{b}^\nu\tilde{b}^\tau
=e^{-iN2\pi\theta \nu\sigma}A^kz^\sigma\tilde{b}^{\nu+\tau}\in\mathcal{V}_A.
\end{equation}
Next, using equations \eqref{multplsa} and \eqref{multplsi},
we obtain
\begin{equation}
A^kz^\sigma\tilde{b}^\tau z^\nu=e^{-iN2\pi\theta \tau\nu}A^kz^\sigma z^\nu\tilde{b}^\tau
=e^{-iN2\pi\theta \tau\nu}A^k(1+\qQ^p_{\sigma;\nu}(A))z^{\sigma+\nu}\tilde{b}^\tau
\in\mathcal{V}_A.
\end{equation}
Similarly, Equation~\eqref{multplsa} yields
\begin{equation}
z^\nu A^kz^\sigma\tilde{b}^\tau=p^{-\nu k}A^kz^\nu z^\sigma\tilde{b}^\tau
=p^{-\nu k}A^k(1+\qQ^p_{\nu;\sigma}(A))z^{\nu+\sigma}\tilde{b}^\tau\in\mathcal{V}_A.	
\end{equation}
The final inclusions follows immediately from Equation~\eqref{muldz} and Equation~\eqref{muldb} using
Equation~\eqref{multplse}.
\end{proof}

\begin{lemma}
For all $\mu,\nu\in\Z$, we have the commutation relation
\begin{equation}\label{azch}
\tilde{a}^\nu z^\mu- e^{-iN2\pi\theta \mu\nu}z^\mu\tilde{a}^\nu\in \mathcal{V}_A.
\end{equation}
Here  for $\mu,\nu<0$ we have written  $\tilde{a}^\nu:=\tilde{a}^{*|\nu|}$
and $z^\mu=z^{*|\mu|}$ for brevity.
\end{lemma}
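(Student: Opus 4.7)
The plan is a two-layer induction: first fix $|\nu|=1$ and induct on $|\mu|$, then induct on $|\nu|$ for arbitrary $\mu$. Two preliminary observations are used throughout. First, $\mathcal{V}_A$ is closed under the $*$-operation, since the adjoint of a spanning vector $A^kz^\sigma\tilde b^\tau$ ($k>0$) equals $\tilde b^{-\tau}z^{-\sigma}A^k$ and lies in $\mathcal{V}_A$ by two applications of the left-multiplication inclusions in Lemma~\ref{muldlem}. Second, by \eqref{multplsa}, \eqref{multplsb}, and the vanishing in \eqref{qmndef} of $Q^p_{\mu;\nu}$ when $\mu\nu\geq 0$, one has the exact identities $z^{\mu+1}=zz^\mu$ for $\mu\geq 0$ (and $z^{\mu-1}=z^*z^\mu$ for $\mu\leq 0$), together with $\tilde a^{\nu+1}=\tilde a\,\tilde a^\nu$ for $\nu\geq 0$ (and $\tilde a^{\nu-1}=\tilde a^*\tilde a^\nu$ for $\nu\leq 0$).

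I would first treat the four base cases $|\mu|=|\nu|=1$. The ``same-sign'' commutators $\tilde az-e^{-iN2\pi\theta}z\tilde a$ and $\tilde a^*z^*-e^{-iN2\pi\theta}z^*\tilde a^*$ vanish outright by \eqref{lensec} and its adjoint, while the ``opposite-sign'' commutators $\tilde a^*z-e^{iN2\pi\theta}z\tilde a^*$ and $\tilde az^*-e^{iN2\pi\theta}z^*\tilde a$ are, up to a scalar, the right-hand side of \eqref{lensed} and its adjoint, each of which is manifestly in $\mathcal{V}_A$ (here $*$-closedness of $\mathcal{V}_A$ is invoked for the adjoint version).

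Next, for each choice of $\epsilon=\pm 1$, I would extend $|\nu|=1$ to arbitrary $\mu$ by induction on $|\mu|$. For $\mu\geq 1$, writing $z^{\mu+1}=zz^\mu$ and applying the $|\mu|=1$ base case yields
\begin{equation*}
\tilde a^{\epsilon}z^{\mu+1}=(\tilde a^{\epsilon}z)z^\mu=e^{-iN2\pi\theta\epsilon}z\,\tilde a^{\epsilon}z^\mu+r_1z^\mu
\end{equation*}
for some $r_1\in\mathcal{V}_A$; invoking the inductive hypothesis on $\tilde a^{\epsilon}z^\mu$ then produces $e^{-iN2\pi\theta\epsilon(\mu+1)}z^{\mu+1}\tilde a^{\epsilon}$ plus error terms of the form $zr_2$ and $r_1z^\mu$, which lie in $\mathcal{V}_A$ by Lemma~\ref{muldlem}. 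The case $\mu\leq -1$ is symmetric using $z^{\mu-1}=z^*z^\mu$.

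Finally, I would induct on $|\nu|$. Given the claim for some $\nu\geq 1$ and all $\mu$, write $\tilde a^{\nu+1}=\tilde a\,\tilde a^\nu$ to obtain
\begin{equation*}
\tilde a^{\nu+1}z^\mu=\tilde a(\tilde a^\nu z^\mu)=\tilde a\bigl(e^{-iN2\pi\theta\mu\nu}z^\mu\tilde a^\nu+r\bigr)=e^{-iN2\pi\theta\mu\nu}(\tilde a z^\mu)\tilde a^\nu+\tilde a r,
\end{equation*}
with $r\in\mathcal{V}_A$; one further application of the $|\nu|=1$ step to $\tilde a z^\mu$ yields the desired $e^{-iN2\pi\theta\mu(\nu+1)}z^\mu\tilde a^{\nu+1}$ modulo terms which Lemma~\ref{muldlem} again absorbs into $\mathcal{V}_A$. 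The case $\nu\leq -1$ is analogous via $\tilde a^{\nu-1}=\tilde a^*\tilde a^\nu$. The argument is conceptually light; the main obstacle is disciplined bookkeeping of the error terms at every step and repeatedly certifying, via Lemma~\ref{muldlem}, that they stay in $\mathcal{V}_A$.
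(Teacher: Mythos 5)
Your proof is correct and follows essentially the same strategy as the paper's: reduce to the base cases $|\mu|=|\nu|=1$, where the same-sign cases are exact by \eqref{lensec} and the mixed-sign cases come from \eqref{lensed} and its adjoint, then extend by commuting one factor at a time and absorbing error terms via Lemma~\ref{muldlem}. The only notable difference is that you handle the $\mu<0$, $\nu>0$ case by first observing that $\mathcal{V}_A$ is closed under $*$, which lets you pass directly from the adjoint of \eqref{lensed} to membership in $\mathcal{V}_A$; the paper instead massages the adjoint expression explicitly via \eqref{lensea}, \eqref{lensec}, and \eqref{multplsi}. Your $*$-closedness observation is a modest but genuine simplification, and your explicit two-layer induction makes precise the iteration that the paper's terse ``follows immediately'' and ``follows from'' leave to the reader.
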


\begin{proof}
The cases $\mu$ and $\nu$ both positive or both negative follow immediately from Equation~\eqref{lensec}, even replacing ${\mathcal V}_A$
by the zero subspace. For $\mu>0$ and $\nu<0$, the result follows from Equation~\eqref{lensed}. The final case follows by taking
the adjoint of Equation~\eqref{lensed}, manipulating the result using Equations~\eqref{lensea}, \eqref{lensec}, and then finally applying Equation~\eqref{multplsi}.
\end{proof}

\begin{lemma}
The vector subspace $\mathcal{V}\subseteq\mathcal{A}$ is closed under multiplication.
\end{lemma}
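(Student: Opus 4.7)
The plan is to decompose $\mathcal{V}=\Span(\mathcal{V}_A\cup\mathcal{V}_0\cup\mathcal{V}_B)$ and to check that each of the nine component products $\mathcal{V}_i\cdot\mathcal{V}_j$ lies in $\mathcal{V}$. Using Lemma~\ref{muldlem} together with the fact that every monomial generator of $\mathcal{V}_0$ or $\mathcal{V}_B$ is of the form $B^kz^\mu\tilde{a}^\nu$, iterating the inclusions of that lemma gives $\mathcal{V}_A\cdot\mathcal{W}\subseteq\mathcal{V}_A$; the symmetric statement $\mathcal{W}\cdot\mathcal{V}_A\subseteq\mathcal{V}_A$ follows by the same argument together with the annihilation $AB=0$ (and the commutativity of $A$ with $B$), which ensures that any factor $B^l$ with $l\ge 1$ kills every $\mathcal{V}_A$-generator. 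So it suffices to analyse $\mathcal{W}\cdot\mathcal{W}$.

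For a general product $(B^kz^\mu\tilde{a}^\nu)(B^lz^\sigma\tilde{a}^\tau)$ with $k,l\ge 0$, I first use the scalar commutations $B\tilde{a}=\tilde{a}B$ and $zB=qBz$ to pull both powers of $B$ to the far left, obtaining a scalar multiple of $B^{k+l}z^\mu\tilde{a}^\nu z^\sigma\tilde{a}^\tau$. Then Equation~\eqref{azch} commutes $\tilde{a}^\nu$ past $z^\sigma$ modulo a remainder in $\mathcal{V}_A$; this remainder is absorbed by $B^{k+l}$ to $0$ when $k+l\ge 1$ (by $AB=0$), and lies in $\mathcal{V}_A$ directly when $k=l=0$ thanks to Lemma~\ref{muldlem}. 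Next, Equations~\eqref{multplsa} and \eqref{multplsb} collapse $z^\mu z^\sigma$ and $\tilde{a}^\nu\tilde{a}^\tau$, producing, up to a nonzero scalar,
\[
B^{k+l}\bigl(1+\qQ^p_{\mu;\sigma}(A)+\qQ^q_{-\mu;-\sigma}(B)\bigr)z^{\mu+\sigma}\bigl(1+\qQ^p_{N\nu;N\tau}(A)\bigr)\tilde{a}^{\nu+\tau}.
\]

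Expanding this expression splits it cleanly into four kinds of summand. The constant-term contribution is $B^{k+l}z^{\mu+\sigma}\tilde{a}^{\nu+\tau}$, which lies in $\mathcal{V}_B$ when $k+l>0$ and in $\mathcal{V}_0$ when $k+l=0$. The $\qQ^q_{-\mu;-\sigma}(B)$-summand has no constant coefficient in $B$, so it contributes directly to $\mathcal{V}_B$. The mixed cross term $\qQ^p(A)\cdot\qQ^q(B)$ vanishes by $AB=0$. It remains to place the pure $A$-terms: after using $Az^\rho=p^\rho z^\rho A$ to push the $A$-factors left of $z^{\mu+\sigma}$, they take the form $B^{k+l}A^rz^{\mu+\sigma}\tilde{a}^{\nu+\tau}$ with $r\ge 1$, which vanishes if $k+l\ge 1$ and otherwise, via Equation~\eqref{multplse}, becomes a scalar multiple of $A^rz^{\mu+\sigma+N(\nu+\tau)}\tilde{b}^{\nu+\tau}\in\mathcal{V}_A$.

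The main obstacle is not conceptual but clerical: one has to keep careful track of the scalar factors produced by the commutations and confirm that $A$- and $B$-prefixed terms are correctly redirected into $\mathcal{V}_A$ and $\mathcal{V}_B$. The two algebraic inputs that make this bookkeeping clean are the annihilation $AB=0$, which kills all cross terms and every $A$-term in the presence of a factor $B^l$ with $l\ge 1$, and the identity \eqref{multplse}, which converts any $A$-prefixed power of $\tilde{a}$ into the canonical $\mathcal{V}_A$-form $Az^\bullet\tilde{b}^\bullet$.
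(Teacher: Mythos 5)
Your argument is correct and follows essentially the same route as the paper's: the same decomposition $\mathcal{V}=\Span(\mathcal{V}_A\cup\mathcal{V}_0\cup\mathcal{V}_B)$, the same appeal to Lemma~\ref{muldlem} for products touching $\mathcal{V}_A$, and the same reduction of the remaining case to $z^\mu\tilde a^\nu z^\sigma\tilde a^\tau$ via~\eqref{azch}, \eqref{multplsa}, \eqref{multplsb} and~\eqref{multplse}, with $AB=0$ killing cross terms. If anything you are more explicit than the paper about how the $B$-prefixes in $\mathcal{W}\cdot\mathcal{W}$ are carried along and absorbed, a step the paper compresses into ``it follows immediately.''

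Two small points to tidy. First, you announce all nine products $\mathcal{V}_i\cdot\mathcal{V}_j$ but your reduction explicitly handles only $\mathcal{V}_A\cdot\mathcal{W}$, $\mathcal{W}\cdot\mathcal{V}_A$ and $\mathcal{W}\cdot\mathcal{W}$; since $\mathcal{V}_A\not\subseteq\mathcal{W}$, the product $\mathcal{V}_A\cdot\mathcal{V}_A$ is never addressed. The fill is trivial -- the generators of $\mathcal{V}_A$ are $A^k z^\mu\tilde b^\nu$, so the same iteration of Lemma~\ref{muldlem} gives $\mathcal{V}_A\cdot\mathcal{V}_A\subseteq\mathcal{V}_A$ -- but it should be said, exactly as the paper does. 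Second, the pure $A$-terms are not quite ``a scalar multiple of $A^r z^{\mu+\sigma+N(\nu+\tau)}\tilde b^{\nu+\tau}$'': after pushing $A^r$ past $z^{\mu+\sigma}$ and applying~\eqref{multplse}, the resulting $z^{\mu+\sigma}z^{N(\nu+\tau)}$ regroups through~\eqref{multplsa} and contributes a further polynomial in $A$ (the $B$-part of which dies against $A^r$). The outcome is still a combination of $A^s z^{\mu+\sigma+N(\nu+\tau)}\tilde b^{\nu+\tau}$ with $s\ge 1$, hence in $\mathcal{V}_A$, so the conclusion stands; a cleaner route, avoiding this bookkeeping, is simply to note $A^r z^{\mu+\sigma}\in\mathcal{V}_A$ for $r\ge 1$ and then invoke $\mathcal{V}_A\tilde a^{\nu+\tau}\subseteq\mathcal{V}_A$ from Lemma~\ref{muldlem}.
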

\begin{proof}
It is enough to consider products of basis vectors. First we note that because the only cost of commuting
$A$ and $B$ through any other generator of $\mathcal{V}$ is the appearing of central coefficients and $AB=BA=0$, we can easily conclude that $\mathcal{V}_A\mathcal{V}_B=\mathcal{V}_B\mathcal{V}_A=0$. 
Next,
from Lemma~\ref{muldlem} one immediately concludes that
$\mathcal{V}_A\cdot\mathcal{V}_A\subseteq\mathcal{V}_A$,
$\mathcal{V}_A\cdot\mathcal{V}_0\subseteq\mathcal{V}_A$ and
$\mathcal{V}_0\cdot\mathcal{V}_A\subseteq\mathcal{V}_A$.

Furthermore, using  \eqref{azch}, \eqref{muld}, \eqref{multplsa}, 
\eqref{multplsb}, one can conclude that,
for all $\mu,\nu,\sigma,\tau\in\Z$,
\begin{align}
z^\mu\tilde{a}^\nu z^\sigma\tilde{a}^\tau &\in 
e^{-iN2\pi\theta \nu\sigma}z^\mu z^\sigma\tilde{a}^\nu\tilde{a}^\tau+z^\mu\mathcal{V}_A\tilde{a}^\tau
\subseteq e^{-iN2\pi\theta \nu\sigma}z^\mu z^\sigma\tilde{a}^\nu\tilde{a}^\tau+\mathcal{V}_A\nonumber\\
&=e^{-iN2\pi\theta \nu\sigma}(1+\qQ^p_{\mu;\sigma}(A)+\qQ^q_{-\mu;-\sigma}(B))z^{\mu+\sigma}(1+\qQ^p_{N \nu;N \tau}(A))\tilde{a}^{\nu+\tau}
+\mathcal{V}_A\nonumber\\
&=e^{-iN2\pi\theta \nu\sigma}(1+\qQ^{\nu,\tau}_{\mu,\sigma}(A)+\qQ^q_{-\mu;-\sigma}(B))z^{\mu+\sigma}\tilde{a}^{\nu+\tau}+\mathcal{V}_A,
\end{align}
where we have denoted $\qQ^{\nu,\tau}_{\mu,\sigma}(A)=\qQ^p_{\mu;\sigma}(A)+\qQ^p_{N \nu;N \tau}(p^{-\mu-\sigma}A)(1+
\qQ^p_{\mu;\sigma}(A))$ for brevity. Observe that due to Equation~\eqref{multplse}
we have
$\qQ^{\nu,\tau}_{\mu,\sigma}(A)z^{\mu+\sigma}\tilde{a}^{\nu+\tau}\in\mathcal{V}_A$, so that 
$z^\mu\tilde{a}^\nu z^\sigma\tilde{a}^\tau\in\mathcal{V}$
for all $\mu,\nu,\sigma,\tau\in\Z$. It follows immediately that also 
$\mathcal{W}\cdot\mathcal{W}\subseteq\mathcal{V}$, which completes
the proof.
\end{proof}

Summarising, we conclude that $\mathcal{V}=\mathcal{A}$ because $\mathcal{V}$ contains generators of $\mathcal{A}$ 
and is closed under multiplication. Hence the vectors in the set ${\mathcal B}$, Equation~\eqref{lensbasis}, span $\mathcal{A}$.
This proves half of Theorem~\ref{thm:basis}, and we now complete the proof. 
To this end, we take the natural $*$-homomorphism $f:\mathcal{A}\rightarrow \ffun(L^N_{pq\theta})$ defined on generators of $\mathcal{A}$ by
\begin{equation}
A'\mapsto A:=1-aa^*,\quad B'\mapsto B:=1-bb^*,\quad z'\mapsto ab^*,\quad \tilde{a}'\mapsto a^N,\quad
\tilde{b}'\mapsto b^N.
\end{equation}
It is enough to prove that this $*$-homomorphism is a linear bijection.


Before doing so, we note that it follows from
 \eqref{heegard} and~\eqref{chlemma} that 
\begin{equation}
\label{fzet}
f((z')^\mu)=e^{i\pi\theta\mu(\mu-1)}a^\mu b^{-\mu},\quad \mu\in\Z.
\end{equation}
Also, it follows from Equation~\eqref{xxsqgen} and relations~\eqref{heegard}, that the
values of $f$ on linear generators~\eqref{lensbasis} of $\mathcal{A}$ are given by
\begin{subequations}
\label{fongens}
\begin{gather}
f((A')^k(z')^\mu(\tilde{b}')^\nu)=e^{i\pi\theta\mu(\mu-1)}A^ka^\mu b^{N \nu-\mu}, \label{fongensa}\\
f((B')^k(z')^\mu(\tilde{a}')^\nu)=e^{i2\pi\theta\left(\frac{\mu(\mu-1)}{2}+N \mu\nu\right)}B^ka^{\mu+N \nu}b^{-\mu},\label{fongensb}\\
f((z')^\mu(\tilde{a}')^\nu)=e^{i2\pi\theta\left(\frac{\mu(\mu-1)}{2}+N \mu\nu\right)}(1+Q_{\mu;N \nu}^p(A))a^{\mu+N \nu}b^{-\mu},\label{fongensc}
\end{gather}
\end{subequations}
for all $k,\mu,\nu\in\Z$, $k>0$.

First we show that the homomorphism $f$ is surjective. It is enough to prove that an arbitrary vector from basis~\eqref{Lbasis}
is in the image of $f$. For all $k,\mu,\nu\in\Z$, $k>0$,   we have   
\begin{equation}\label{finva}
A^ka^\mu b^{N \nu -\mu}=f\left(e^{-i\pi\theta\mu(\mu-1)}(A')^k(z')^\mu(\tilde{b}')^\nu\right).
\end{equation}
On the other hand, \eqref{fongensb} implies that for all 
$k,\mu,\nu\in\Z$, $k>0$,  we have 
\begin{equation}\label{finvb}
B^ka^\mu b^{N \nu -\mu}=f\left(
e^{-i2\pi\theta\left(\frac{(-N \nu+\mu)(-N \nu+\mu-1)}{2}+N(-N \nu+\mu)\nu\right)}
(B')^k(z')^{-N \nu+\mu}(\tilde{a}')^\nu
\right).
\end{equation}
Finally,  it follows from the same computation which led
to Equation~\eqref{finvb} that for all $\mu,\nu\in \Z$
\begin{equation}
f\left(e^{-i2\pi\theta\left(\frac{(-N \nu+\mu)(-N \nu+\mu-1)}{2}+N(-N \nu+\mu)\nu\right)}
(z')^{-N \nu+\mu}(\tilde{a}')^\nu\right)
=(1+Q_{-N \nu+\mu;N \nu}^p(A))a^\mu b^{N \nu-\mu}.
\end{equation}
Hence, using Equation~\eqref{finva} we find that for all $\mu,\nu\in \Z$
\begin{align}
a^\mu b^{N \nu-\mu}\label{finvc}
&=f\!\left(\!e^{-i2\pi\theta\left(\frac{(-N \nu+\mu)(-N \nu+\mu-1)}{2}+N(-N \nu+\mu)\nu\right)}
(z')^{-N \nu+\mu}(\tilde{a}')^\nu\right)
\\
&\phantom{=}\;-f\left(e^{-i\pi\theta\mu(\mu-1)}Q_{-N \nu+\mu;N \nu}^p(A')(z')^\mu(\tilde{b}')^\nu\!\right).\nonumber
\end{align}

Next, to show that the homomorphism $f$ is injective, we suppose that $f(v)=0$ for some 
$v\in\mathcal{A}$. Since the set ${\mathcal B}$ spans ${\mathcal A}$, we can write $v$ as a linear combination 
\begin{equation*}
v=\sum_{k>0;\;\mu,\nu\in\Z}\alpha_{k\mu\nu}(A')^k(z')^\mu(\tilde{b}')^\nu+\sum_{k'>0;\;\mu',\nu'\in\Z}\beta_{k'\mu'\nu'}(B')^{k'}(z')^{\mu'}(\tilde{a}')^{\nu'}
+\sum_{\mu'',\nu''\in\Z}\gamma_{\mu''\nu''}(z')^{\mu''}(\tilde{a}')^{\nu''}.
\end{equation*}
Using equations~\eqref{fongens}, we can  explicitly compute $f(v)$ to be
\begin{align*}
&f\left(
\sum_{k>0;\;\mu,\nu\in\Z}\alpha_{k\mu\nu}(A')^k(z')^\mu(\tilde{b}')^\nu+\sum_{k'>0;\;\mu',\nu'\in\Z}\beta_{k'\mu'\nu'}(B')^{k'}(z')^{\mu'}(\tilde{a}')^{\nu'}
+\sum_{\mu'',\nu''\in\Z}\gamma_{\mu''\nu''}(z')^{\mu''}(\tilde{a}')^{\nu''}
\right)\nonumber\\
&=\sum_{k>0;\;\mu,\nu\in\Z}\alpha_{k\mu\nu}e^{i\pi\theta\mu(\mu-1)}A^ka^\mu b^{N \nu-\mu}
+\sum_{k'>0;\;\mu',\nu'\in\Z}\beta_{k'\mu'\nu'}e^{i2\pi\theta\left(\frac{\mu'(\mu'-1)}{2}-N \mu'\nu'\right)}B^{k'}a^{\mu'+N \nu'}b^{-\mu'}\nonumber\\
&\phantom{=}\;+\sum_{\mu'',\nu''\in\Z}\gamma_{\mu''\nu''}e^{i2\pi\theta\left(\frac{\mu''(\mu''-1)}{2}-N \mu''\nu''\right)}
(1+Q^p_{\mu'';N \nu''}(A))a^{\mu''+N \nu''}b^{-\mu''}.
\end{align*}
Since the set of vectors~\eqref{Lbasis} is linearly independent, it follows immediately that $f(v)=0$ implies
that $\beta_{k'\mu'\nu'}=0$, for all $k',\mu',\nu'\in\Z$, $k'>0$. 
Now considering the terms 
$\gamma_{\mu''\nu''}a^{\mu''+N \nu''}b^{-\mu''}$ 
in the last sum, we see that  $\gamma_{\mu''\nu''}=0$, for all 
$\mu'',\nu''\in\Z$. Then also
$\alpha_{k\mu\nu}=0$, for all $k,\mu,\nu\in\Z$, $k>0$. Hence $v=0$, so 
that $f$ is injective. Finally,
note that this also proves that the set of vectors~\eqref{lensbasis} is  
linearly independent.

\subsection{Non-cleftness  of the Heegaard $\mathcal{O}(\mathbb{Z}/N\mathbb{Z})$-comodule algebras}

Let us begin by showing that $\ffun(S^3_{pq})$ is a
principal comodule algebra. As explained in the preliminaries,
to this end it suffices to construct a strong connection.
It turns out that a simple modification of the formulae for a
strong connection given in \cite[(4.4)--(4.6)]{hms06a} yields a 
strong connection in our case.
We define the linear map 
\[
\ell:\ffun(\Z/N\Z)\ni h\longmapsto
\ell(h)=:h^{\langle 1\rangle}\ot h^{\langle 2\rangle}
\in\ffun(S^3_{pq\theta})\ot\ffun(S^3_{pq\theta})\quad
\]
 (summation understood in 
$h^{\langle 1\rangle}\ot h^{\langle 2\rangle}$) 
by setting its values on the basis elements
$\tilde{u}^n$, $n=0,\ldots,N-1$, to be 
\[
\ell(1)=1\ot 1,\quad
\ell(\tilde{u})=a^*\ot a + p^{-1}b^*A\ot b,\quad
\label{algstrong}
\ell(\tilde{u}^n)=\tilde{u}^{\langle 1\rangle}
\ell(\tilde{u}^{n-1})\tilde{u}^{\langle 2\rangle},\quad
n=2,\ldots,N-1.
\]
Proving that the above defined map is a strong connection is
almost identical to an argument provided in~\cite{hms06a}.

The method we use to show
the non-existence of a cleaving map, is to prove that there are not enough invertibles in the comodule algebra of the quantum sphere
to accomodate the range of such a map. 
\begin{theorem}\label{noninv}
The only invertible elements in the algebra of polynomial functions on the Heegaard quantum sphere $\ffun(S^3_{pq\theta})$
are non-zero multiples of the identity.
\end{theorem}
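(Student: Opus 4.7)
The plan is to exploit the piecewise-trivial structure of $\ffun(S^3_{pq\theta})$: from the basis~\eqref{hbasis} one reads off that the two ideals $\ffun(S^3_{pq\theta})A$ and $\ffun(S^3_{pq\theta})B$ intersect trivially, so the pair of quotient maps onto the two quantum solid tori $\ffun(S^3_{pq\theta})/\ffun(S^3_{pq\theta})A$ and $\ffun(S^3_{pq\theta})/\ffun(S^3_{pq\theta})B$ is injective. The further mutual quotient by both $A$ and $B$ is the noncommutative two-torus $\C\langle a^{\pm 1},b^{\pm 1}\mid ab=e^{i2\pi\theta}ba\rangle$ with linear basis $\{a^\mu b^\nu\}_{\mu,\nu\in\Z}$. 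The strategy is therefore to determine the invertibles in each solid torus and then enforce compatibility in the two-torus.

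The main technical step is the classification of invertibles inside each solid torus. In $\ffun(S^3_{pq\theta})/\ffun(S^3_{pq\theta})A$ the element $a$ becomes a unitary of degree one in the inherited $U(1)$-grading, and the degree-zero subalgebra is generated by $B$ and $z=ab^*$ subject to $zB=qBz$, $z^*z=1-B$, $zz^*=1-qB$; via $w:=z^*$ and $B=1-ww^*$ one recognises this as the defining relations of the quantum disc $\ffun(D_q)$. The solid torus then takes the shape of a Laurent extension $\ffun(D_q)[a,a^{-1};\sigma]$ with $\sigma=\mathrm{Ad}\,a$. Because $\ffun(D_q)$ is a noncommutative integral domain (as an iterated Ore extension), a top- and bottom-degree argument in this Laurent extension forces any invertible to be a single homogeneous term $c\cdot a^n$ with $c\in\ffun(D_q)^\times$. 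The same style of argument applied internally to $\ffun(D_q)$ then pins down $\ffun(D_q)^\times=\C^\times$: a hypothetical degree-$N$ invertible of the form $p(B)w^N$ with inverse $q(B)w^{*N}$ would give, via Lemma~\ref{lem:n=m}, the polynomial identity $(1+\qQ^q_N(B))\,p(B)\,q(q^{-N}B)=1$ in the commutative polynomial ring $\C[B]$, but $1+\qQ^q_N(B)$ has genuine degree $N\geq 1$, forcing $N=0$. Hence the invertibles in the first solid torus are exactly $\{\lambda a^n:\lambda\in\C^\times,\,n\in\Z\}$, and a symmetric argument yields invertibles $\{\mu b^m:\mu\in\C^\times,\,m\in\Z\}$ in the second.

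The endgame is then quick. For invertible $x\in\ffun(S^3_{pq\theta})$, write $\pi_A(x)=\lambda a^{n_A}$ and $\pi_B(x)=\mu b^{n_B}$; their common image in the two-torus gives $\lambda a^{n_A}=\mu b^{n_B}$, and the linear independence of $\{a^\mu b^\nu\}$ forces $n_A=n_B=0$ and $\lambda=\mu$. Therefore $x-\lambda\in\ker\pi_A\cap\ker\pi_B=0$, giving $x=\lambda\cdot 1$.

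The principal obstacle will be the leading-and-trailing-term classification inside the solid torus and inside the quantum disc. The two ingredients we really need are that $\ffun(D_q)$ is a noncommutative domain with only scalar units and that $\qQ^q_N$ has genuine degree $N$; once both are in hand, the remainder of the argument is formal consequence of the pullback structure and the linear independence of the monomial basis of the noncommutative two-torus.
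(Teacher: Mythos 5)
Your proposal is correct, and it takes a genuinely different route from the paper's. The paper works entirely inside $\ffun(S^3_{pq\theta})$: it expands any element via the basis \eqref{hbasis} with coefficients $C^r_{\mu;\nu}(A,B)$, puts the lexicographic order on $\Z\times\Z$, and defines four leading/trailing degree maps (along the $\mathcal X$ and $\mathcal Y$ ``sectors'') which it shows are additive on products of invertibles; this forces an invertible to be a single monomial $C(A,B)a^\mu b^\nu$, and a final polynomial degree count in $A$ and $B$ collapses it to a scalar. You instead exploit the piecewise-trivial/pullback structure: you classify units in each solid-torus quotient $\ffun(S^3_{pq\theta})/\ffun(S^3_{pq\theta})A$ and $\ffun(S^3_{pq\theta})/\ffun(S^3_{pq\theta})B$ by exhibiting each as a skew Laurent extension $\ffun(D_q)[a,a^{-1};\sigma]$ (resp. $\ffun(D_p)[b,b^{-1};\sigma']$) over the quantum disc, using that the disc is a domain with only scalar units (the latter by the degree of $1+\qQ^q_N$), and then you enforce compatibility in the noncommutative two-torus via the joint injectivity of the two quotient maps. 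Both arguments ultimately hinge on the same algebraic input (the basis \eqref{hbasis} and the product formula of Lemma~\ref{lem:m-neq-n}), but yours localizes the hard work inside $\ffun(D_q)$ and mirrors the classical Heegaard picture of $S^3$ as two glued solid tori, which is conceptually cleaner; the paper's argument is more self-contained in that it avoids having to verify the solid-torus identifications. Two small points worth tightening in your write-up: the passage from the degree-zero subalgebra of the solid torus to $\ffun(D_q)$ (via $w=z^*$, $B=1-ww^*$, $wB=q^{-1}Bw$) should be spelled out so the Laurent-extension identification is airtight; and the parenthetical ``as an iterated Ore extension'' for $\ffun(D_q)$ is imprecise — $\ffun(D_q)$ is more naturally a generalized Weyl algebra over $\C[B]$, but the conclusion that it is a domain (which is what you actually use) holds and can also be read off directly from the basis $\{B^k w^\mu\}$ and the product formula \eqref{xxsqgen}.
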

\begin{proof}
Our proof will follow the general idea and structure 
of the proof of non-existence of non-trivial invertible elements in 
another noncommutative deformation of the polynomial algebra of $S^3$,
namely the Hopf algebra $\ffun(SL_q(2))$~\cite{hm99}.
Here we use the basis~\eqref{hbasis} to present 
each element in $\ffun(S^3_{pq\theta})$
as a linear combination of  monomials $a^\mu b^\nu$, $\mu,\nu\in\Z$,
 with coefficients in the polynomial algebra
generated by $A$ and $B$.  The crux of the proof is that  $a$ and
$b$ are invertible  up to  polynomials in $A$ and $B$.

For the duration of this proof, 
for $\Z\ni \mu,\nu<0$ we will write $b^\mu=b^{*|\mu|}$ and $a^\nu:=a^{*|\nu|}$ for brevity.
Recall that any element   $r\in \ffun(S^3_{pq\theta})$ can be expanded using basis~\eqref{hbasis} as
\begin{equation}\label{Cmnr}
r=\sum_{\stackrel{\mu,\nu\in\Z}{\mbox{\scriptsize{finite}}}}C^r_{\mu;\nu}(A,B)a^\mu b^\nu,
\end{equation}
where $\{C^r_{\mu;\nu}\}_{\mu,\nu\in\Z}$ is a family of complex polynomials in two variables  without mixed monomials.
Here no mixed monomials  means that
it can be written as 
\begin{equation}
\label{Cmnsplit}
C^r_{\mu;\nu}(X,Y)=\gamma^r_{\mu;\nu}+\alpha^r_{\mu;\nu}(X)+\beta^r_{\mu;\nu}(Y),
\end{equation}
where $\gamma^r_{\mu;\nu}\in\C$ and
 $\alpha^r_{\mu;\nu}$ and $\beta^r_{\mu;\nu}$ are polynomials such that
$\alpha^r_{\mu;\nu}(0)=0$ and $\beta^r_{\mu;\nu}(0)=0$.
Since the family of vectors~\eqref{hbasis} is a linear basis, it follows 
that the
polynomials $C^r_{\mu;\nu}$ are uniquely determined by $r$. 

We endow  $\Z\times\Z$
with the  lexicographical order, i.e.,
\begin{equation}\label{lexico}
(\mu,\nu)\leq (\mu',\nu')\quad :=\quad \mu< \mu' \vee (\mu=\mu'\wedge \nu\leq \nu').
\end{equation} 
This order is linear (total) and satisfies
\begin{equation}
\label{ordmin}
(\mu,\nu)\leq (\mu',\nu')\quad \Longleftrightarrow\quad (-\mu',-\nu')\leq (-\mu,-\nu).
\end{equation}
The latter justifies introducing a notation $-(\mu,\nu)=(-\mu,-\nu)$.
Next, we define a $\Z^2$-grading $\deg:\ffun(S^3_{pq\theta})\to \Z^2$ 
by declaring
\begin{equation}
\label{gradzz}
\dgz(C(A,B)a^\mu b^\nu):=(\mu,\nu),\quad \text{for all\ }\mu,\nu\in\Z\text{\ and\ }C(A,B)\neq 0,
\end{equation} 
where $C$ is a polynomial in two variables with coefficients in $\C$.

We will also need to divide $\ffun(S^3_{pq\theta})$ into the following linear subspaces:
\begin{align}
\mathcal{X}&=\Span\{A^ka^\mu b^\nu\;|\;k\geq 0,\;\mu,\nu\in\Z\},\quad
&\mathcal{X}_1&=\Span\{A^ka^\mu b^\nu \;|\;k> 0,\;\mu,\nu\in\Z\},\nonumber\\
\mathcal{Y}&=\Span\{B^ka^\mu b^\nu \;|\;k\geq 0,\;\mu,\nu\in\Z\},\quad
&\mathcal{Y}_1&=\Span\{B^ka^\mu b^\nu\;|\;k> 0,\;\mu,\nu\in\Z\}.\label{hsubs}
\end{align}
Note that
\begin{equation}
\label{splitprop}
\mathcal{X}\cap\mathcal{Y}_1=\mathcal{X}_1\cap\mathcal{Y}=\{0\},\quad
\mathcal{X}\oplus\mathcal{Y}_1=\mathcal{X}_1\oplus\mathcal{Y}=\ffun(S^3_{pq\theta}),
\end{equation}
i.e., we have two splittings of $\ffun(S^3_{pq\theta})$ into a direct sum of subspaces.
The expansion~\eqref{Cmnr} and formula~\eqref{Cmnsplit} provide a way to split
any element $r\in\ffun(S^3_{pq\theta})$ into a sum of vectors from these subspaces. For instance, we can split $r$ into vectors
\begin{equation}
\sum_{\mu,\nu\in\Z}(\gamma^r_{\mu;\nu}+\alpha^r_{\mu;\nu}(A))a^\mu b^\nu\in\mathcal{X}\quad\mbox{and}\quad
\sum_{\mu,\nu\in\Z}\beta^r_{\mu;\nu}(B)a^\mu b^\nu\in\mathcal{Y}_1.
\end{equation}

Using commutation relations~\eqref{heegard} and Equation~\eqref{xxsqgen},
we can write the expansion of a product of elements $r$ and $s$ of $\ffun(S^3_{pq\theta})$ as
\begin{align}
rs&=\left(\sum_{\mu,\nu\in\Z}C^r_{\mu;\nu}(A,B)a^\mu b^\nu\right)
\left(\sum_{\mu',\nu'\in\Z}C^s_{\mu';\nu'}(A,B)a^{\mu'}b^{\nu'}\right)\nonumber\\
&=\sum_{\mu,\nu,\mu',\nu'\in\Z}e^{i2\pi\theta \nu\mu'}C^r_{\mu;\nu}(A,B)C^s_{\mu';\nu'}(p^{-\mu}A,q^{-\nu}B)(a^\mu a^{\mu'})(b^\nu b^{\nu'})\nonumber\\
&=\sum_{\mu,\nu,\mu',\nu'\in\Z}e^{i2\pi\theta \nu\mu'}C^r_{\mu;\nu}(A,B)C^s_{\mu';\nu'}(p^{-\mu}A,q^{-\nu}B)(1+\qQ^p_{\mu;\mu'}(A))
(1+\qQ^q_{\nu;\nu'}(B))a^{\mu+\mu'}b^{\nu+\nu'}.
\end{align}
Hence we obtain
\begin{align}
\label{Cmnmult}
& C^{rs}_{\mu;\nu}(A,B)\\
&=\!\!\!\sum_{\mu',\nu'\in\Z}\!\!\!e^{i2\pi\theta \nu'(\mu-\mu')}C^r_{\mu';\nu'}(A,B)C^s_{\mu-\mu';\nu-\nu'}(p^{-\mu'}A,q^{-\nu'}B)
\left(1+\qQ^p_{\mu';\mu-\mu'}(A)+\qQ^q_{\nu';\nu-\nu'}(B)\right)\nonumber.
\end{align}
It follows immediately that 
\begin{equation}
\label{multsplit}
\mathcal{X}\mathcal{Y}_1\subseteq\mathcal{Y}_1,\quad
\mathcal{Y}_1\mathcal{X}\subseteq\mathcal{Y}_1,\quad
\mathcal{Y}\mathcal{X}_1\subseteq\mathcal{X}_1,\quad
\mathcal{X}_1\mathcal{Y}\subseteq\mathcal{X}_1.
\end{equation}

Furthermore, writing
\begin{equation}\label{58}
r=\sum_{\stackrel{\mu,\nu\in\Z}{\mbox{\scriptsize{finite}}}}C^r_{\mu;\nu}(A,B)a^\mu b^\nu
=\sum_{\stackrel{\mu,\nu\in\Z}{\mbox{\scriptsize{finite}}}}\left(\gamma^r_{\mu;\nu}+\alpha^r_{\mu;\nu}(A)+\beta^r_{\mu;\nu}(B)\right)a^\mu b^\nu
\end{equation}
we observe that if $r$ is invertible and either all $\gamma^r_{\mu;\nu}$'s or all $\gamma^{r^{-1}}_{\mu;\nu}$'s are
 zero, then the expansion of $rr^{-1}$
in terms of basis~\eqref{hbasis} would contain only vectors from 
$\mathcal{X}_1$ or $\mathcal{Y}_1$. However, this is impossible because 
$1\notin \mathcal{X}_1\oplus \mathcal{Y}_1$. Hence, if $r$ is invertible,
 then at least one of the $\gamma^r_{\mu;\nu}$'s 
(and also one of the $\gamma^{r^{-1}}_{\mu;\nu}$'s) is non-zero. 
This observation allows us to define  maps on the invertible elements of 
$\ffun(S^3_{pq\theta})$ by
\begin{subequations}
\label{maxmins}
\begin{align}
\mxda:&\ffun(S^3_{pq\theta})\longrightarrow \Z\times\Z,&\quad r&\longmapsto \max\{(\mu,\nu)\in\Z\times\Z\;|\;
\gamma^r_{\mu;\nu}+\alpha^r_{\mu;\nu}(A)\neq 0\},\\
\mxdb:&\ffun(S^3_{pq\theta})\longrightarrow \Z\times\Z,&\quad r&\longmapsto \max\{(\mu,\nu)\in\Z\times\Z\;|\;
\gamma^r_{\mu;\nu}+\beta^r_{\mu;\nu}(B)\neq 0\},\\
\mnda:&\ffun(S^3_{pq\theta})\longrightarrow \Z\times\Z,&\quad r&\longmapsto \min\{(\mu,\nu)\in\Z\times\Z\;|\;
\gamma^r_{\mu;\nu}+\alpha^r_{\mu;\nu}(A)\neq 0\},\\
\mndb:&\ffun(S^3_{pq\theta})\longrightarrow \Z\times\Z,&\quad r&\longmapsto \min\{(\mu,\nu)\in\Z\times\Z\;|\;
\gamma^r_{\mu;\nu}+\beta^r_{\mu;\nu}(B)\neq 0\}.
\end{align}
\end{subequations}

Next, let $r,s\in\ffun(S^3_{pq\theta})$ be invertible, and let 
$(\mu,\nu)=\mxda(r)$ and $(\mu',\nu')=\mxda(s)$. 
Then, by Equation~\eqref{Cmnmult},
\begin{equation}
\gamma^{rs}_{\mu+\mu';\nu+\nu'}+\alpha^{rs}_{\mu+\mu';\nu+\nu'}(A)=e^{i2\pi\theta \nu\mu'}(\gamma^{r}_{\mu;\nu}+\alpha^{r}_{\mu;\nu}(A))
(\gamma^{s}_{\mu';\nu'}+\alpha^{s}_{\mu'\nu'}(p^{-\mu}A))(1+\qQ^p_{\mu;\mu'}(A)).
\end{equation}
The factors on the right-hand side are non-zero by the definition
of $(\mu,\nu)$ and $(\mu',\nu')$ and because the algebra generated by 
$A$ does not contain zero-divisors.
It follows that $\gamma^{rs}_{\mu+\mu';\nu+\nu'}+\alpha^{rs}_{\mu+\mu';\nu+\nu'}(A)\neq 0$.
Therefore, for all invertible $r$ and $s$  we have
\begin{subequations}
\label{maxminsum}
\begin{equation}
\mxda(rs)=\mxda(r)+\mxda(s),
\end{equation}
where the addition of
 pairs of integers is done componentwise. 
Similarily, we prove that for all invertible $r$ and $s$ we have
\begin{align}
\mxdb(rs)&=\mxdb(r)+\mxdb(s),\\
\mnda(rs)&=\mnda(r)+\mnda(s),\\
\mndb(rs)&=\mndb(r)+\mndb(s).
\end{align}
\end{subequations}

Suppose now that $r\in \ffun(S^3_{pq\theta})$ is invertible. Then $\mxda(rr^{-1})=\mxda(1)=(0,0)$, and similarily
$\mxdb(rr^{-1})=\mndb(rr^{-1})=\mnda(rr^{-1})=(0,0)$. Hence Equations~\eqref{maxminsum} imply that
\begin{gather}
\label{minmaxmin}
\mxda(r^{-1})=-\mxda(r),\quad
\mxdb(r^{-1})=-\mxdb(r),\\
\mnda(r^{-1})=-\mnda(r),\quad
\mndb(r^{-1})=-\mndb(r).\nonumber
\end{gather}
In particular, starting with an obvious property that $\mnda(r^{-1})\leq \mxda(r^{-1})$ and substituting Equation~\eqref{minmaxmin} into it
yields that $-\mnda(r)\leq-\mxda(r)$. Hence, using 
Equation~\eqref{ordmin}, we obtain that $\mxda(r)\leq\mnda(r)$, so
that 
\begin{subequations}
\label{minmaxeq}
\begin{equation}
\label{minmaxeqa}
\mxda(r)=\mnda(r)=-\mxda(r^{-1})=-\mnda(r^{-1}). 
\end{equation}
Similarily we prove that
\begin{equation}
\label{minmaxeqb}
\mxdb(r)=\mndb(r)=-\mxdb(r^{-1})=-\mndb(r^{-1}). 
\end{equation}
\end{subequations}
On the other hand, we already know that in the sum \eqref{58}
 $\exists\; (\mu,\nu):\;\gamma^r_{\mu;\nu}\neq 0$. By the linear independence, also $\gamma^r_{\mu;\nu}+\alpha^r_{\mu;\nu}(A)\neq 0$
and $\gamma^r_{\mu;\nu}+\beta^r_{\mu;\nu}(B)\neq 0$. Therefore,
by \eqref{minmaxeqa} and \eqref{minmaxeqb},
$\mxda(r)=(\mu,\nu)=\mxdb(r)$. Using again the linear independence,
we conclude that all terms in \eqref{58} with the index different
from $(\mu,\nu)$ must vanish.

Summarising, so far we have proven
that an invertible element $r\in\ffun(S^3_{pq\theta})$ and its inverse must have the form
\begin{equation}
r=(\gamma^r_{\mu;\nu}+\alpha^r_{\mu;\nu}(A)+\beta^r_{\mu;\nu}(B))a^\mu b^\nu,\quad
r^{-1}=(\gamma^{r^{-1}}_{-\mu;-\nu}+\alpha^{r^{-1}}_{-\mu;-\nu}(A)+\beta^{r^{-1}}_{-\mu;-\nu}(B))a^{-\mu}b^{-\nu}
\end{equation}
for some  $\mu,\nu\in\Z$, with $\gamma^r_{\mu;\nu}\gamma^{r^{-1}}_{-\mu;-\nu}\neq 0$. Then inserting $r$ and $r^{-1}$ into 
formula~\eqref{Cmnmult} yields
\[
1=e^{-i2\pi\theta\mu\nu}(\gamma^r_{\mu;\nu}+\alpha^r_{\mu;\nu}(A))(\gamma^{r^{-1}}_{-\mu;-\nu}+\alpha^{r^{-1}}_{-\mu;-\nu}(A))(1+\qQ^p_{\mu;-\mu}(A))+
B\,(\mbox{polynomial}(B)).
\]
By linear independence, the term in $B$ vanishes. 
Now by polynomial degree counting, we can conclude that the polynomial in $A$ is of degree zero, and hence so are its factors. 
This yields 
\begin{equation}
\alpha^r_{\mu;\nu}(A)=\alpha^{r^{-1}}_{-\mu;-\nu}(A)=\qQ^p_{\mu;-\mu}(A)=0.
\end{equation}
Repeating the argument for $B$ gives
\begin{equation}
\beta^r_{\mu;\nu}(B)=\beta^{r^{-1}}_{-\mu;-\nu}(B)=\qQ^q_{\nu;-\nu}(B)=0.
\end{equation}
Recalling that $\qQ^p_{\mu;-\mu}(A)=\qQ^q_{\nu;-\nu}(B)=0$ only if $\nu=\mu=0$, we infer that $r=\gamma^r_{0;0}\in\C\setminus\{0\}$.
\end{proof}

Since the Hopf algebra $\ffun(\Z/N\Z)$ contains a non-trivial
 group-like element, in the image of a cleaving map there would
have to be a non-trivial invertible (see preliminaries). Hence
Theorem~\ref{noninv} implies that a cleaving map does not exist:
\begin{corollary}
$\ffun(S^3_{pq\theta})$ is a non-cleft $\ffun(\Z/N\Z)$-comodule algebra.
\end{corollary}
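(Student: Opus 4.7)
The proof will be a direct application of Theorem~\ref{noninv} combined with the general criterion for non-cleftness recorded at the end of Subsection~\ref{principal}. The plan is to argue by contradiction: assuming a cleaving map exists produces an invertible element in $\ffun(S^3_{pq\theta})$ that is not a scalar multiple of the identity, which clashes with Theorem~\ref{noninv}.

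Concretely, I would begin by supposing that $j:\ffun(\Z/N\Z)\to\ffun(S^3_{pq\theta})$ is a cleaving map (the case $N=1$ being trivial since then $\ffun(\Z/N\Z)=\C$). The generator $\tilde{u}\in\ffun(\Z/N\Z)$ is a group-like element, i.e., $\Delta(\tilde{u})=\tilde{u}\otimes\tilde{u}$ and $\counit(\tilde{u})=1$, and for $N>1$ it is linearly independent from $1$. The evaluation of a convolution-invertible map on a group-like collapses the convolution to an ordinary product: from $j*j^{-1}=\eta\circ\counit$ one reads off $j(\tilde{u})\,j^{-1}(\tilde{u})=1$, so $j(\tilde{u})$ is algebraically invertible in $\ffun(S^3_{pq\theta})$.

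Next, I would use the injectivity of cleaving maps, already invoked in the preliminaries: since $j(1)=1$ (unitality) and $\tilde{u}\neq c\cdot 1$ in $\ffun(\Z/N\Z)$ for any $c\in\C$, injectivity forces $j(\tilde{u})\neq c\cdot 1$ in $\ffun(S^3_{pq\theta})$. Thus $j(\tilde{u})$ would be an invertible element of $\ffun(S^3_{pq\theta})$ that is not a scalar multiple of the identity, in direct contradiction with Theorem~\ref{noninv}. Hence no cleaving map exists.

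There is essentially no obstacle at this stage: all the effort has already been spent in Theorem~\ref{noninv}, and the corollary is simply the packaged form of the implication ``non-trivial group-like $+$ no non-trivial invertibles $\Rightarrow$ non-cleft'' stated at the close of Subsection~\ref{principal}. The only minor care needed is to separate off the degenerate case $N=1$, where the statement is vacuous.
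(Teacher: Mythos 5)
Your argument is correct and follows the paper's own proof exactly: the paper likewise observes that the non-trivial group-like $\tilde{u}\in\ffun(\Z/N\Z)$ would, via the (injective, convolution-invertible) cleaving map, produce a non-scalar invertible in $\ffun(S^3_{pq\theta})$, contradicting Theorem~\ref{noninv}. Your write-up merely unpacks the preliminaries' remark in a bit more detail and flags the degenerate $N=1$ case, which the paper leaves implicit.
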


\section{Comodule algebras over the $C^*$-algebras of Heegaard  lens spaces}
\setcounter{equation}{0}

\subsection{K-groups}

For the $K$-theory calculations to come, we utilise a description of the 
Heegaard quantum sphere as a pullback of $U(1)$-$C^*$-algebras,
see the first example of Section~5.2 in~\cite{HKMZ}.
We write $\T$ for the  Toeplitz algebra, and since we will have two 
copies of this algebra, we denote their generating isometries by 
$z_\pm$. 
The corresponding unitaries implementing the crossed products 
$\T\rtimes_{\pm\theta}\Z$ are denoted by $u_\pm$. 
Finally, $Z_+$ and $U_+$
stand for the
two generating unitaries of the noncommutative torus 
$C(S^1)\rtimes_{\theta}\Z$.
With this notation, the pullback structure of 
\begin{equation}
C(S^3_{pq\theta})=\{(a_+,a_-)\in \T\rtimes_\theta\Z\oplus\T\rtimes_{-\theta}\Z:\pi_1(a_+)=\pi_2(a_-)\}
\label{eq:pullback}
\end{equation} 
is given by the following diagram and maps:
\begin{equation}
\vcenter{
\xymatrix@=0pt@R=0.5cm{
&&&C(S^3_{pq\theta})\ar[ldd]\ar[rdd]&&&\\
&&&&&&\\
& &\T\Rt{\theta}\Z \ar[rdd]^(.4){\pi_1}&& \T\Rt{-\theta}\Z\ar[ldd]_(.4){\pi_2}&& \\
z_+\ar@{|->}[dd]&&u_+\ar@{|->}[dd]&&z_-\ar@{|->}[dd]&&u_-\ar@{|->}[dd]\\
&&&\ \ \  C(S^1)\Rt{\theta} \Z\ \ \  &&&\\
Z_+&&U_+&&\!\!\!\!\!\!Z_+^{-1}\!\!\!\!\!\!&&\!\!\!\!\!\!Z_+U_+\,.\!\!\!\!\!\!
}
}
\end{equation}
This is a pullback diagram of $U(1)$-$C^*$-algebras, 
with the natural $U(1)$-action on the $\Z$-parts.
We restrict this action of $U(1)$ to $\Zn$ and consider the pullback diagram obtained by the restriction
of the above one to its $\Zn$-invariant part:
\begin{equation}
\vcenter{
\xymatrix@=0pt@R=0.5cm{
&&&C(L^N_{pq\theta})\ar[ldd]\ar[rdd]&&&\\
&&&&&&\\
& &\T \Rt{\theta} N\Z \ar[rdd]&& \T\Rt{-\theta}N\Z\ar[ldd]&& \\
z_+\ar@{|->}[dd]&&u_+^N\ar@{|->}[dd]&&z_-\ar@{|->}[dd]&&u_-^N\ar@{|->}[dd]\\
&&&\ \ \  C(S^1)\Rt{\theta} N\Z\ \ \  &&&\\
Z_+&&U_+^N&&\!\!\!\!\!\!Z_+^{-1}\!\!\!\!\!\!&&\!\!\!\!\!\!(Z_+U_+)^N.\!\!\!\!\!\!
}
}
\end{equation}
We can use the commutation relations in the noncommutative torus to 
simplify the rightmost map as
$(Z_+U_+)^N=e^{iN(N-1)\pi\theta}Z_+^NU_+^N$.
Introducing the  generators
\begin{equation}
\tilde{z}_+:=z_+,\quad \tilde{u}_+:=u_+^N,\quad \tilde{z}_-:=z_-,\quad
\tilde{u}_-:=u_-^N,\quad
Z:=Z_+,\quad U:=U_+^N,
\end{equation}
we can rewrite this pullback diagram as
\begin{equation}
\vcenter{
\xymatrix@=0pt@R=0.5cm{
&&&C(L^N_{pq\theta})\ar[ldd]_{\mathrm{pr}_1}
\ar[rdd]^{\mathrm{pr}_2}&&&&\\
&&&&&&&\\
& & \T\!\Rt{N\theta}\Z\ar[rdd]^(.4){\tilde{\pi}_1}&& \T\!\!\Rt{-N\theta}\!\Z\ar[ldd]_(.4){\tilde{\pi}_2}&& &\\
\tilde{z}_+\ar@{|->}[dd]&&\tilde{u}_+\ar@{|->}[dd]&&\tilde{z}_-\ar@{|->}[dd]&&\tilde{u}_-\ar@{|->}[dd]&\\
&&&\ \ \  C(S^1)\!\Rt{N\theta} \Z\ \ \  &&&&\\
Z&&U&&\!\!\!\!\!\!Z^{-1}\!\!\!\!\!\!&&e^{iN(N-1)\pi\theta}Z^NU.&\!\!\!\!\!\!\!\!\!\!\!\!
}
}
\end{equation}

For the $K$-theory calculations to come, we need to know the effect of the maps in the pullback diagram on $K$-theory generators. 
These are given by
\begin{align}
K_0(\T\!\Rt{N\theta}\Z)\cong \Z\ni m&\stackrel{\tilde{\pi}_{1\ast}}{\longmapsto} (m,0)\in \Z\oplus\Z\cong K_0(C(S^1)\!\Rt{N\theta} \Z),\nonumber\\
K_0(\T\!\!\Rt{-N\theta}\!\Z)\cong\Z\ni m&\stackrel{\tilde{\pi}_{2\ast}}{\longmapsto} (m,0)\in \Z\oplus\Z\cong K_0(C(S^1)\!\Rt{N\theta} \Z),\nonumber\\
K_1(\T\!\Rt{N\theta}\Z)\cong \Z\ni n &\stackrel{\tilde{\pi}_{1\ast}}{\longmapsto}(0,n)\in \Z\oplus\Z\cong  K_1(C(S^1)\!\Rt{N\theta} \Z),\nonumber\\
K_1(\T\!\!\Rt{-N\theta}\!\Z)\cong \Z\ni n &\stackrel{\tilde{\pi}_{2\ast}}{\longmapsto} (Nn,n)\in\Z\oplus\Z\cong  K_1(C(S^1)\!\Rt{N\theta} \Z).
\end{align}
Inserting these $K$-theory groups and maps into the Mayer-Vietoris 6-term exact sequence (see preliminaries)
\begin{equation}
\vcenter{
\xymatrix{
K_0(C(L^N_{pq\theta}))\ar[r] & 
\stackrel{\phantom{a} }{K_0(\T\!\Rt{N\theta}\Z)\oplus 
K_0(\T\!\!\Rt{-N\theta}\!\Z)}\ar[r]&
\stackrel{\phantom{a} }{K_0(C(S^1)\!\Rt{N\theta} \Z)}\!\ar[d]\\
K_1(C(S^1)\!\Rt{N\theta} \Z)\ar[u]&
\stackrel{\phantom{a} }{K_1(\T\!\Rt{N\theta}\Z)\oplus 
K_1(\T\!\!\Rt{-N\theta}\!\Z)}\ar[l] &K_1(C(L^N_{pq\theta}))\ar[l]
}
}
\end{equation}
yields the exact sequence 
\begin{equation}
\label{halfway}
\vcenter{
\xymatrix{
K_0(C(L^N_{pq\theta}))\ar[rr] && \Z\oplus\Z\ar[rr]^{(m,n)\mapsto (m-n,0)}&&\Z\oplus\Z\ar[d]\\
\Z\oplus\Z\ar[u]&&\Z\oplus\Z\ar[ll]_{(-Nn,m-n)\mapsfrom (m,n)} &&K_1(C(L^N_{pq\theta}))\ar[ll]_-0.
}
}
\end{equation}
Thus we immediately obtain that $K_1(C(L^N_{pq\theta}))=\Z$. Using this information, we can simplify the sequence
\eqref{halfway} to the exact sequence
\begin{equation}
0\rightarrow N\Z\oplus\Z\hookrightarrow\Z\oplus\Z\rightarrow K_0(C(L^N_{pq\theta}))\rightarrow\Z\oplus\Z
\rightarrow\Z\rightarrow 0.
\end{equation}
Consequently, the sequence
\begin{equation}
0\rightarrow N\Z\hookrightarrow \Z\stackrel{f}{\rightarrow}K_0(C(L^N_{pq\theta}))
\rightarrow\Z\rightarrow 0
\end{equation}
is exact. Since $\Z$ is projective,  $K_0(C(L^N_{pq\theta}))=\Im f\oplus \Z$
and $\Im f=\Zn$.
Summarising, we have derived
\begin{theorem}
$K_0(C(L^N_{pq\theta}))=\Zn\oplus \Z$ and $K_1(C(L^N_{pq\theta}))=\Z$.
\end{theorem}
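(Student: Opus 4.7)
The plan is to exploit the pullback $C^*$-algebra description of $C(L^N_{pq\theta})$ displayed just above the theorem and feed it into the Mayer-Vietoris six-term exact sequence from the preliminaries. The two face algebras of this pullback are the Toeplitz crossed products $\T\Rt{N\theta}\Z$ and $\T\Rt{-N\theta}\Z$, and their common quotient is the noncommutative torus $C(S^1)\Rt{N\theta}\Z$. By Pimsner-Voiculescu (or Morita equivalence of the Toeplitz crossed product with $\mathbb{C}$), we get $K_0\cong K_1\cong \Z$ for each face algebra, while for the noncommutative torus $K_0\cong K_1\cong \Z\oplus \Z$.

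Next I would carefully compute the maps $\tilde{\pi}_{1\ast}$ and $\tilde{\pi}_{2\ast}$ on $K$-theory. On $K_0$ both generators come from $[1]$ and land at $(1,0)$, yielding on the direct sum of $K_0$'s the map $(m,n)\mapsto (m-n,0)$. On $K_1$ the generator $[\tilde{u}_+]$ maps to $[U]=(0,1)$, so $\tilde\pi_{1\ast}(n)=(0,n)$. For $\tilde\pi_{2\ast}$, the generator $[\tilde{u}_-]$ is sent to $e^{iN(N-1)\pi\theta}Z^NU$; the scalar phase is connected to $1$ and drops out in $K_1$, so the class is $N[Z]+[U]$, giving $\tilde\pi_{2\ast}(n)=(Nn,n)$. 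Hence the difference map on $K_1$ is $(m,n)\mapsto (-Nn,m-n)$.

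Plugging these data into the Mayer-Vietoris sequence gives the six-term sequence displayed in the paper. The $K_1$-map $(m,n)\mapsto(-Nn,m-n)$ is injective, so the connecting arrow $K_1(C(L^N_{pq\theta}))\to \Z\oplus\Z$ vanishes; combined with the surjection of the $K_0$-difference map onto a $\Z$, this immediately forces $K_1(C(L^N_{pq\theta}))\cong \Z$. On the $K_0$ side one obtains the short exact sequence
\[
0\longrightarrow \Z/N\Z \longrightarrow K_0(C(L^N_{pq\theta}))\longrightarrow \Z \longrightarrow 0,
\]
where the kernel $\Z/N\Z$ arises as the cokernel of the $K_1$-map (image $N\Z\oplus\Z\subset\Z\oplus\Z$) and the quotient $\Z$ arises as the kernel of the $K_0$-difference map. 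Since $\Z$ is free, this sequence splits, producing $K_0(C(L^N_{pq\theta}))\cong \Z/N\Z\oplus\Z$.

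The main technical pitfall to watch is the computation of $\tilde\pi_{2\ast}$ on $K_1$: one must use the commutation relation $(Z_+U_+)^N=e^{iN(N-1)\pi\theta}Z_+^NU_+^N$ in the noncommutative torus and then recognise that the resulting unitary $Z^NU$ represents $N[Z]+[U]$ rather than just $[U]$ or $[Z]$. This $N$ factor is precisely what produces the torsion $\Z/N\Z$ in $K_0$ and is the crucial arithmetic input of the whole argument; once it is correct, the remainder of the proof is routine diagram-chasing.
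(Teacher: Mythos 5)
Your proof is correct and follows essentially the same route as the paper: the same pullback description, the same identification of $K$-groups of the Toeplitz crossed products and the noncommutative torus, the same computation of $\tilde\pi_{1\ast}$ and $\tilde\pi_{2\ast}$ (including the crucial $N[Z]+[U]$ on $K_1$), and the same Mayer-Vietoris diagram chase. The only cosmetic difference is that you compute the $\Z/N\Z$ directly as a cokernel rather than via the auxiliary map $f$ the paper introduces.
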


\subsection{The generators of $K_0$}

With the foregoing computation of $K$-groups at hand, we are ready to
prove the main result of this paper.
\begin{theorem} 
\label{thm:not-free}
Let
$L_N:=\{x\in C(S^3_{pq\theta})\;|\;\alpha_{e^{\frac{2\pi i}{N}}}(x)
=e^{\frac{2\pi i}{N}}x\}\subset C(S^3_{pq\theta})$. Then $L_N$ is 
\emph{not 
stably free} as a left $C(L^N_{pq\theta})$-module.
\end{theorem}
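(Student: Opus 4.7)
The plan is to exhibit $[L_N]-[C(L^N_{pq\theta})]$ as the image under the Bass connecting homomorphism of a specific class in $K_1(C(S^1)\Rt{N\theta}\Z)$, and then use the explicit form of the Mayer-Vietoris sequence derived in~\eqref{halfway} to identify this image as a generator of the torsion subgroup $\Zn\subseteq K_0(C(L^N_{pq\theta}))$. Since a non-zero class cannot come from a stably free module, this will finish the proof.

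First, I would use the strong connection $\ell$ from the preceding subsection to write down an explicit idempotent representing $L_N$. Since the left $\ffun(\Zn)$-comodule $V=\C$ corresponding to $L_N$ is one-dimensional with coaction determined by the group-like $\tilde u$, the general recipe (see preliminaries) applied to $\ell(\tilde u)=a^*\otimes a+p^{-1}b^*A\otimes b$ yields a $2\times 2$ idempotent $e\in M_2(\ffun(L^N_{pq\theta}))$ with entries $e_{ij}=e_ix_j$, where $(x_1,x_2)=(a^*,p^{-1}b^*A)$ and $(e_1,e_2)=(a,b)$. This same matrix represents $L_N$ as a finitely generated projective $C(L^N_{pq\theta})$-module, so $[L_N]=[e]\in K_0(C(L^N_{pq\theta}))$.

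Next, I would transport $e$ through the pullback presentation of $C(L^N_{pq\theta})$ and identify it with a Bass idempotent. Concretely, I would compute $\mathrm{pr}_{1\ast}[e]\in K_0(\T\Rt{N\theta}\Z)$ and $\mathrm{pr}_{2\ast}[e]\in K_0(\T\Rt{-N\theta}\Z)$ and show they coincide with $[1]$ in each factor; by exactness of~\eqref{halfway} this means $[e]-[1]$ lies in the image of $\mathrm{Bass}$. Then, guided by the form of $e$, I would choose compatible lifts $c,d\in M_2(\T\Rt{N\theta}\Z)$ of $U^{-1}$ and $U$ for a natural invertible $U\in C(S^1)\Rt{N\theta}\Z$ and verify, using the explicit formula~\eqref{p}, that $p_U$ is conjugate in $M_4(C(L^N_{pq\theta}))$ to $\mathrm{diag}(e,0)$ (possibly after adding and subtracting trivial summands). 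The natural candidate for $U$ is the generator $Z$ of the $C(S^1)$-part of $K_1(C(S^1)\Rt{N\theta}\Z)\cong\Z\oplus\Z$, which is precisely the class that is killed under neither $\tilde\pi_{1\ast}$ nor $\tilde\pi_{2\ast}$ modulo $N$.

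Finally, I would read off the conclusion from the Mayer-Vietoris diagram. The map $K_1(\T\Rt{N\theta}\Z)\oplus K_1(\T\Rt{-N\theta}\Z)\to K_1(C(S^1)\Rt{N\theta}\Z)$ computed in the previous subsection has image $N\Z\oplus\Z$, so the cokernel is $\Zn$ and this cokernel is exactly the image of $\mathrm{Bass}$, namely the torsion summand of $K_0(C(L^N_{pq\theta}))=\Zn\oplus\Z$. Since $[Z]$ projects to a generator of the $\Zn$-quotient of the first factor, $\mathrm{Bass}[Z]$ is a generator of the torsion, and by the matching in the previous step $[L_N]-[C(L^N_{pq\theta})]=\mathrm{Bass}[Z]$ is of order exactly $N$; in particular it is non-zero, so $L_N$ is not stably free.

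The main obstacle will be the middle step: carrying out the identification of the idempotent $e$ built from the strong connection with the Bass idempotent $p_U$ through the pullback isomorphism~\eqref{eq:pullback}. This requires a careful explicit choice of the lifts $c,d$ matching the images of $a,b,A,B$ in each Toeplitz factor and then reducing the four-block matrix~\eqref{p} modulo the equivalence relation in $K_0$; the computation is forced but combinatorially delicate because of the twisting by $\theta$ and the need to track the crossed-product generators $\tilde z_\pm, \tilde u_\pm$ explicitly.
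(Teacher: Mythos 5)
Your overall strategy (Mayer--Vietoris for the pullback plus the Bass connecting homomorphism applied to $[Z]$) is the same as the paper's, and the computation of the torsion subgroup $\mathbb{Z}/N\mathbb{Z}$ and the choice of $Z$ as its preimage under Bass are correct. But there are two substantive differences between your route and the paper's, one of cost and one of correctness.

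First, the cost. You propose to work with the $2\times 2$ idempotent built from the algebraic strong connection $\ell(\tilde u)=a^*\otimes a+p^{-1}b^*A\otimes b$ and then to identify it, through the pullback isomorphism, with the Bass idempotent $p_Z$. You rightly flag this as ``combinatorially delicate.'' The paper avoids the whole computation by first invoking the $U(1)$-equivariant isomorphism $C(S^3_{pq\theta})\cong C(S^3_{00\theta})$, where the generator $s$ is an isometry, and then using the much simpler $C^*$-strong connection $\ell(\tilde u^k)=s^{*k}\otimes s^k$. This replaces your $2\times 2$ idempotent with the rank-one projection $ss^*$, and under the pullback description $ss^*=(\tilde z_+\tilde z_+^*,1)$. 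The Bass construction with the obvious lifts $c=\tilde z_+^*$, $d=\tilde z_+$ of $Z^{-1}$, $Z$ then produces by direct substitution a $2\times 2$ matrix that is manifestly $I_2-\mathrm{diag}(0,(\tilde z_+\tilde z_+^*,1))$, so $\mathrm{Bass}[Z]=[1]-[L_N]$ drops out with no extra work. In other words, the reduction to $p=q=0$ is the key simplification you are missing; without it, matching your $e$ with $p_Z$ would require tracking the crossed-product generators and the $\theta$-twist by hand.

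Second, the correctness gap. Your concluding step asserts that ``a non-zero class cannot come from a stably free module,'' and later that $[L_N]-[1]$ being non-zero of order $N$ finishes the proof. As stated, this is false: a stably free module of rank $r$ has $K_0$-class $r[1]$, which is non-zero in general. What is needed is the following: if $L_N$ were stably free, then $[L_N]-[1]=(m-k-1)[1]$ for some integers $m,k\geq 0$. Since $[L_N]-[1]$ has order $N$, we would get $N(m-k-1)[1]=0$. But $[1]$ is \emph{not} torsion, because $\mathrm{pr}_{1*}\colon K_0(C(L^N_{pq\theta}))\to K_0(\T\rtimes_{N\theta}\Z)\cong\Z[1]$ sends $[1]$ to a generator. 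Hence $m-k-1=0$, forcing $[L_N]-[1]=0$, contradicting $\mathrm{Bass}[Z]\neq 0$. This use of the non-torsionness of $[1]$, witnessed by $\mathrm{pr}_{1*}$, is the essential final ingredient that your outline leaves implicit and must be supplied.

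A minor point: in your middle step you speak of lifts $c,d\in M_2(\T\rtimes_{N\theta}\Z)$ and a $p_U\in M_4$, but for $U=Z$ a single unitary the lifts are $1\times 1$ and the Bass idempotent is $2\times 2$; the dimension bookkeeping in your sketch is off by a factor of two.
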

\begin{proof}
The $C^*$-algebra $C(S^3_{pq\theta})$ is isomorphic as a 
$U(1)$-$C^*$-algebra  to $C(S^3_{00\theta})$ \cite[Theorem~2.8]{bhms05}. 
The latter is generated by isometries $s$ and $t$
with the $U(1)$-action given by $\tilde{\alpha}_{e^{i\phi}}(s)=e^{i\phi}s$, $\tilde{\alpha}_{e^{i\phi}}(t)=e^{i\phi}t$.
The induced $\Zn$-action can be therefore written as $\Delta_R(s)=s\otimes \tilde{u}$,
$\Delta_R(t)=t\otimes \tilde{u}$, where $\tilde{u}\in C(\Zn)$, $\tilde{u}(e^{\frac{2\pi ik}{N}})=e^{\frac{2\pi ik}{N}}$.
One can immediately check that the formula
\begin{equation}\label{c*strong}
\ell(\tilde{u}^k)=s^{\ast k}\otimes s^k,\quad k\in\{0,\ldots,N-1\}
\end{equation}
defines a strong connection, so that $C(S^3_{pq\theta})$ is a 
$C(\Zn)$-principal comodule algebra (see preliminaries).

Next, let $\C$ be a left $C(\Zn)$-comodule  via 
$\varrho(1)=\tilde{u}\otimes 1$. Then, as explained in 
Subsection~\ref{principal}, we can write
\begin{equation}
L_N\cong C(S^3_{pq\theta})\stackbin[C(\Zn)]{}{\Box}\C \cong
 C(L^N_{pq\theta})ss^*.
\end{equation}
Thus $ss^*\in C(L^3_{pq\theta}(N))$ is 	an idempotent (in fact, projection) representing the $K_0$ class of $L_N$.
On the other hand, we know from the preceding $K$-theory computation that the $\Zn$-part of $K_0(C(L^N_{pq\theta}))$
is generated by the Bass connecting homomorphism applied to the 
$K_1$-class of the unitary $Z\in C(S^1)\rtimes_{N\theta}\Z$.
In other words, $\mathrm{Bass}[Z]\neq 0$ and  $N\mathrm{Bass}[Z]=0$.
To compute $\mathrm{Bass}[Z]$, we lift $Z$ and $Z^{-1}$ to $\tilde{z}_+,\tilde{z}_+^*\in\T\rtimes_{N\theta}\Z$ respectively.
The Bass construction \eqref{p} yields
\begin{equation}
\left(
\begin{array}{cc}
(1,1)&(0,0)\\
(0,0)&(1-\tilde{z}_+\tilde{z}_+^*,0)
\end{array}
\right)=
\left(
 \begin{array}{cc}
1&0\\
0&1
\end{array}
\right)-
\left(
\begin{array}{cc}
0&0\\
0&(\tilde{z}_+\tilde{z}_+^*,1)
\end{array}
\right).
\end{equation}
Finally, using the  pullback description of  $C(S^3_{00\theta})$ in Equation \eqref{eq:pullback}, 
we note that $s$ is expressed as $(z_+u_+,u_-)$. Hence $ss^*$ can be written as
$(z_+z_+^*,1)$. As this element belongs to the $\Zn$-invariant part,  we can rewrite it in terms of the
$C(L^N_{pq\theta})=C(S^3_{pq\theta})^{\Zn}$-generators, which in this instance just means adding $\tilde{}$, so that
$ss^*:=(\tilde{z}_+\tilde{z}_+^*,1)$. 
Now it is clear that
\begin{equation}
\mathrm{Bass}[Z]=2[1]-[L_N]-[1]=[1]-[L_N].
\end{equation}

If $L_N$ were stably free, then there would exist $k,\,m\in \N$ such 
that $L_N\oplus C(L^N_{pq\theta})^k\cong  C(L^N_{pq\theta})^m$ as
modules. Then the foregoing equation would imply 
\begin{equation}
\mathrm{Bass}[Z]=[1]+k[1]-[L_N\oplus C(L^N_{pq\theta})^k ]=(k+1-m)[1].
\end{equation}
However, since $\mathrm{Bass}[Z]\neq 0$, we conclude that $k+1-m\neq 0$. 
Hence $N(k+1-m)\neq 0$ and $N(k+1-m)[1]=0$.
This contradicts the fact that the projection map 
$C(L^N_{pq\theta})\stackrel{\mathrm{pr}_1}{\longrightarrow}
\T\rtimes_{N\theta}\Z$  
takes the identity to the identity inducing
 the map 
\[
K_0(C(L^N_{pq\theta}))\stackrel{\mathrm{pr}_{1*}}{\longrightarrow}
K_0(\T\!\Rt{N\theta}\Z)=\Z[1],\quad
N(k+1-m)[1]\stackrel{\mathrm{pr}_{1*}}{\longmapsto} N(k+1-m)[1]\neq 0.
\]
Hence $L_N$ is not stably free.
\end{proof}

The above theorem shows that the  module $L_N$ associated
to the $C(\Z/N\Z)$-principal comodule algebra $C(S_{pq\theta}))$
is responsible for the torsion part of~$K_0(C(L^N_{pq\theta}))$
and is not stably free. There is a hierarchy of implications:
associated module not stably free $\Rightarrow$ associated module
not free $\Rightarrow$ principal comodule algebra not cleft
$\Rightarrow$ principal comodule algebra not trivial.
In the algebraic part of this paper, we managed to prove by elementary
methods that the $\ffun(\Z/N\Z)$-principal comodule algebra 
$\ffun(S^3_{pq\theta})$ is not cleft. 
Not going beyond algebraic methods,
we could also prove that the associated $\ffun(L^N_{pq\theta})$-module
\[
\mathcal{L}_N:=\left\{x\in \ffun(S^3_{pq\theta})\;\left|\;\alpha_{e^{\frac{2
\pi i}{N}}}(x)=e^{\frac{2\pi i}{N}}x\right\}
\right.\cong \ffun(S^3_{pq\theta})\stackbin[C(\Zn)]{}{\Box}\C 
\]
is not free.
However, to show that $\mathcal{L}_N$ is not stably free, we need to take
 advantage of Theorem~\ref{thm:not-free}.

\begin{corollary} The $\ffun(L^N_{pq\theta})$-module ${\mathcal L}_N$ 
is \emph{not stably free}. 
\end{corollary}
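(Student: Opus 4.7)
The plan is a short base-change argument. The key observation is that one and the same idempotent matrix with entries in $\ffun(L^N_{pq\theta})$ represents both $\mathcal{L}_N$ (as a left $\ffun(L^N_{pq\theta})$-module) and $L_N$ (after applying the canonical inclusion $\iota:\ffun(L^N_{pq\theta})\hookrightarrow C(L^N_{pq\theta})$, as a left $C(L^N_{pq\theta})$-module). Once this identification is in place, stable freeness of $\mathcal{L}_N$ would force stable freeness of $L_N$ and contradict Theorem~\ref{thm:not-free}.

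To build the idempotent, I would use the algebraic strong connection $\ell$ constructed at the start of Section~1.3. Writing
\[
\ell(\tilde u)=a^*\ot a + p^{-1}b^*A\ot b=:\sum_{i=1}^{2}x_i\ot e_i,
\]
the right-hand tensorands $e_1=a,\, e_2=b$ are linearly independent, so by the recipe recalled in Subsection~\ref{principal} the matrix with entries $E_{ij}:=e_ix_j$ is an idempotent representing $\ffun(S^3_{pq\theta})\Box_{\ffun(\Zn)}\C\cong\mathcal{L}_N$. Since each $e_i$ has $U(1)$-degree $+1$ and each $x_j$ has degree $-1$, every $E_{ij}$ has degree $0$ and therefore lies in the coaction-invariant subalgebra $\ffun(L^N_{pq\theta})$, so $E\in M_2(\ffun(L^N_{pq\theta}))$ and $\mathcal{L}_N\cong \ffun(L^N_{pq\theta})^2\cdot E$. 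The defining equations of a strong connection are purely algebraic identities, so the same $\ell$ (composed with $\iota\ot\iota$) is also a strong connection for the $C(\Zn)$-principal comodule algebra $C(S^3_{pq\theta})$, and the same matrix $E$, regarded as an element of $M_2(C(L^N_{pq\theta}))$, represents $L_N$. Consequently,
\[
\mathcal{L}_N\underset{\ffun(L^N_{pq\theta})}{\otimes}C(L^N_{pq\theta})\cong L_N.
\]

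The corollary then follows immediately. Suppose for contradiction that $\mathcal{L}_N\oplus\ffun(L^N_{pq\theta})^k\cong\ffun(L^N_{pq\theta})^m$ for some $k,m\in\N$. Applying the additive functor $(-)\otimes_{\ffun(L^N_{pq\theta})}C(L^N_{pq\theta})$ to this isomorphism and using the previous display yields
\[
L_N\oplus C(L^N_{pq\theta})^k\cong C(L^N_{pq\theta})^m,
\]
which contradicts Theorem~\ref{thm:not-free}. There is no substantial obstacle to the argument; the only point requiring care is the verification that the algebraic strong connection already takes its values in the appropriate degree-zero subalgebra, and this is manifest from the explicit formula for~$\ell$.
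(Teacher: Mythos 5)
Your proposal is correct and follows essentially the same route as the paper. You use the algebraic strong connection $\ell(\tilde u)=a^*\ot a+p^{-1}b^*A\ot b$ to construct the idempotent $E=(e_ix_j)\in M_2(\ffun(L^N_{pq\theta}))$, observe that it simultaneously represents $\mathcal{L}_N$ over $\ffun(L^N_{pq\theta})$ and $L_N$ over $C(L^N_{pq\theta})$, and then transfer a hypothetical stable freeness of $\mathcal{L}_N$ to one for $L_N$, contradicting Theorem~\ref{thm:not-free}; your explicit matrix $E$ coincides with the paper's $e_N$ after simplifying $E_{22}=p^{-1}bb^*A=p^{-1}A$ and $E_{21}=ba^*=z^*$. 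The only superficial difference is phrasing: you express the transfer via the base-change functor $(-)\otimes_{\ffun(L^N_{pq\theta})}C(L^N_{pq\theta})$, whereas the paper observes directly that matrices $v,w$ over $\ffun(L^N_{pq\theta})$ witnessing stable freeness would also be matrices over $C(L^N_{pq\theta})$ — these are the same argument.
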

\begin{proof}
Replacing formula \eqref{c*strong} by \eqref{algstrong} defines
a strong connection on $C(\Z/N\Z)$-principal comodule algebra
$C(S^3_{pq\theta})$. As in the proof of Theorem~\ref{thm:not-free},
we can use this strong connection to compute an idempotent matrix
representing the associated $C(L^N_{pq\theta})$-module $L_N$.
It turns out to be
\[
e_N:=\begin{pmatrix} 1-A & p^{-1}zA\\ z^* & p^{-1}A\end{pmatrix}
\in M_2(\ffun(L^N_{pq\theta}))\subset M_2(C(L^N_{pq\theta})).
\]
It follows from Theorem~\ref{thm:not-free}
that, for any non-negative integers $k$ and $l$, there are no 
matrices $v$ and $w$  over $C(L^N_{pq\theta})$ such that
\[
vw=
\left(
\begin{array}{cc}
e_N & 0\\
0 & I_k
\end{array}
\right)
\qquad\mbox{and}\qquad wv=I_l,
\]
where $I_k$ and $I_l$ are identity matrices of the size $k$ and $l$
respectively.
Hence there are no such matrices over $\ffun(L^N_{pq\theta})$.
On the other hand, since $\mathcal{L}_N$ is associated by the same
group-like $\tilde{u}$ as $L_N$, and the same formulae \eqref{algstrong}
define a strong connection on both $\ffun(S^3_{pq\theta})$
and $C(S^3_{pq\theta})$ comodule algebras over $\ffun(\Z/N\Z)=
C(\Z/N\Z)$, we infer that the  idempotent matrix $e_N$  also 
 represents $\mathcal{L}_N$ as an $\ffun(L^N_{pq\theta})$-module.
Combining these two facts, we conclude that 
the $\ffun(L^N_{pq\theta})$-module $\mathcal{L}_N$ is not 
stably isomorphic to $\ffun(L^N_{pq\theta})^l$ for any positive
integer~$l$.
\end{proof}

\section{$U(1)$-prolongations}
\setcounter{equation}{0}

\subsection{Prolongations in the algebraic setting}

By Subsection~\ref{principal}, the prolongation of 
$\ffun(S^3_{pq\theta})$ by $\ffun(U(1))$ is a principal
comodule algebra. Furthermore, 
using \cite[Proposition~4.1]{bz}, one can prove that it is not
cleft
if there are no invertible elements in $\ffun(S^3_{pq\theta})$ other 
then multiples of
identity. Therefore, Theorem~\ref{noninv}
enjoys the following corollary.
\begin{corollary}
The prolongation $\ffun(S^3_{pq\theta})\Box_{\ffun(\Z/N\Z)}\ffun(U(1))$ is a non-cleft $\ffun(U(1))$-comodule
algebra.
\end{corollary}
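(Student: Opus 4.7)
The plan is to deduce non-cleftness of $P':=\ffun(S^3_{pq\theta})\Box_{\ffun(\Z/N\Z)}\ffun(U(1))$ from Theorem~\ref{noninv}, via the criterion provided by \cite[Proposition~4.1]{bz}. The underlying mechanism is that, since the projection $\pi\colon\ffun(U(1))\twoheadrightarrow\ffun(\Z/N\Z)$ has non-trivial kernel, a hypothetical cleaving map on $P'$ is forced to produce non-scalar invertible elements in the base algebra $\ffun(S^3_{pq\theta})$ itself, not merely in $P'$. This extra leverage is what pushes the non-cleftness conclusion down from $P'$ to $\ffun(S^3_{pq\theta})$ and makes Theorem~\ref{noninv} directly applicable.

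To spell this out, I would argue by contradiction. Assume a cleaving map $j\colon\ffun(U(1))\to P'$ exists. Since the $\ffun(U(1))$-coaction on the prolongation is $\id\otimes\tilde{\Delta}$ and $u$ is group-like, $j(u)=\sum_i p_i\otimes v_i\in\ffun(S^3_{pq\theta})\otimes\ffun(U(1))$ must satisfy $\sum_i p_i\otimes\tilde{\Delta}(v_i)=\sum_i p_i\otimes v_i\otimes u$. Applying $\id\otimes\varepsilon\otimes\id$ collapses $j(u)$ to the pencil form $p\otimes u$, where $p:=\sum_i\varepsilon(v_i)p_i\in\ffun(S^3_{pq\theta})$; the cotensor condition then reads $\Delta(p)=p\otimes\tilde{u}$, so $p$ is $\Z/N\Z$-homogeneous of degree one. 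Running the same analysis on $j(u^{-1})=j(u)^{-1}$ yields $j(u^{-1})=q\otimes u^{-1}$ with $pq=qp=1$, whence $p$ is an invertible element of $\ffun(S^3_{pq\theta})$.

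By Theorem~\ref{noninv}, the only invertibles of $\ffun(S^3_{pq\theta})$ are non-zero scalar multiples of $1$, which are $\Z/N\Z$-homogeneous of degree zero, contradicting $\Delta(p)=p\otimes\tilde{u}$ with $\tilde{u}\neq 1$. No cleaving map can therefore exist, and the corollary follows. The only piece of bookkeeping is the collapse of $j(u)$ to the form $p\otimes u$ forced by $\ffun(U(1))$-colinearity together with the cotensor condition; once that reduction is in hand, Theorem~\ref{noninv} closes the argument immediately, which is precisely the content of \cite[Proposition~4.1]{bz} specialised to this situation.
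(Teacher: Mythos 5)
Your argument is correct in outline and is, in effect, an unpacking of \cite[Proposition~4.1]{bz}, which the paper invokes as a black box: both you and the paper reduce non-cleftness of the prolongation to the absence of non-scalar invertibles in $\ffun(S^3_{pq\theta})$, i.e.\ to Theorem~\ref{noninv}. The reduction is sound, but one step is misstated. A cleaving map $j$ is only required to be unital, colinear and convolution-invertible; it need not be an algebra map, so the identity $j(u^{-1})=j(u)^{-1}$ does not come for free. What is true is that the convolution inverse $j^{-1}$ evaluated on the group-like $u$ gives the multiplicative inverse of $j(u)$: from $j(u_{(1)})j^{-1}(u_{(2)})=\varepsilon(u)1$ and $\tilde{\Delta}(u)=u\otimes u$ one gets $j(u)j^{-1}(u)=1=j^{-1}(u)j(u)$. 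If you want $j^{-1}(u)$ in the pencil form $q\otimes u^{-1}$, use the twisted colinearity of the convolution inverse, $\Delta_{P'}(j^{-1}(h))=j^{-1}(h_{(2)})\otimes S(h_{(1)})$, which for $h=u$ places $j^{-1}(u)$ in $\ffun(U(1))$-degree $-1$. Alternatively, and more cheaply, you can bypass colinearity of the inverse altogether: once you know $j(u)=p\otimes u$ is invertible in $P'\subseteq\ffun(S^3_{pq\theta})\otimes\ffun(U(1))$, expand a putative inverse as $\sum_n q_n\otimes u^n$, compare $u$-degrees on both sides of $(p\otimes u)(\sum_n q_n\otimes u^n)=1\otimes1$ and of the reversed product, and read off $pq_{-1}=1=q_{-1}p$. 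With that repair, the contradiction with Theorem~\ref{noninv} closes the argument exactly as you describe.
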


Now we will try to describe the algebra
 $\ffun(S^3_{pq\theta})\Box_{\ffun(\Z/N\Z)}\ffun(U(1))$
in more detail. 
First, we observe that
the cotensor product 
$\ffun(S^3_{pq\theta}){\Box}_{\ffun(\Z/N\Z)}\ffun(U(1))$ is equal to 
$(\ffun(S^3_{pq\theta})\otimes\ffun(U(1)))^{\co \ffun(\Z/N\Z)}$
for the coaction
\begin{equation}
\ffun(S^3_{pq\theta})\otimes\ffun(U(1))\ni p\otimes h\mapsto  p_{(0)}
\otimes h_{(2)}\otimes p_{(1)}S(\pi(h_{(1)}))\in 
\ffun(S^3_{pq\theta})\otimes\ffun(U(1))\otimes\ffun(\Z/N\Z).
\label{eq:coaction}
\end{equation}
This coaction defines the following $\Z/N\Z$-action:
\begin{gather}\label{alphatilde}
\tilde{\alpha}:\Z/N\Z\ni e^{\frac{2\pi ik}{N}}\mapsto
\tilde{\alpha}_{e^{\frac{2\pi ik}{N}}}\in 
\mathrm{Aut}(\ffun(S^3_{pq\theta})\otimes\ffun(U(1))),\\
\tilde{\alpha}_{e^{\frac{2\pi ik}{N}}}(x\ot h):=
\alpha_{e^{\frac{2\pi ik}{N}}}(x)\ot h(\cdot\; e^{\frac{-2\pi ik}{N}}).
\nonumber
\end{gather}
With this action in mind, we can now write
\[
\ffun(S^3_{pq\theta})\underset{\ffun(\Z/N\Z)}{\Box}\ffun(U(1))
=
(\ffun(S^3_{pq\theta})\otimes\ffun(U(1)))^{\Z/N\Z}
\]

Next,
a straightforward calculation inspired by \cite[Lemma~5.3]{bz}
and taking advantage of
the Hopf $*$-algebra isomorphism
\begin{equation}
\label{invisom}
\psi:\ffun(U(1))\ni u\longmapsto u^N\in {}^{\co\ffun(\Z/N\Z)}\ffun(U(1))
:=\{h\in\ffun(U(1))\;|\;\pi(h_{(1)})\ot h_{(2)}=1\ot h\}
\end{equation}
allows us to prove the following result.
\begin{proposition}
\label{cotnislem}
The  assigments 
\begin{align*}
\ffun(S^3_{pq\theta})\otimes \ffun(U(1))\ni x\otimes h
&\stackrel{\phi}{\longmapsto} x\sw{0}\otimes x\sw{1}\psi(h)\in (\ffun(S^3_{pq\theta})
\otimes\ffun(U(1)))^{\Z/N\Z},\\
(\ffun(S^3_{pq\theta})\otimes\ffun(U(1)))^{\Z/N\Z}
\ni \sum_ix^i\otimes h^i
&\stackrel{\phi^{-1}}{\longmapsto}  \sum_ix^i\sw{0}\otimes\psi^{-1}(S(x^i\sw{1})h^i)\in 
\ffun(S^3_{pq\theta})\otimes \ffun(U(1)),\nonumber
\label{cotnis}
\end{align*}
define mutually inverse isomorphisms of
$*$-algebras.
\end{proposition}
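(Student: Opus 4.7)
The strategy is to verify four items hidden in the proposition: that $\phi$ lands in $(\ffun(S^3_{pq\theta})\otimes\ffun(U(1)))^{\Z/N\Z}$; that $\phi^{-1}$ is well defined on that subspace; that $\phi$ and $\phi^{-1}$ invert each other; and that $\phi$ is a $*$-algebra homomorphism. Throughout I will read the Sweedler indices $x_{(0)} \otimes x_{(1)}$ appearing in $\phi$ as coming from the original $\ffun(U(1))$-coaction $\rho$ on $P := \ffun(S^3_{pq\theta})$, so that $x_{(1)} \in \ffun(U(1))$ and the product $x_{(1)}\,\psi(h)$ makes sense inside $\ffun(U(1))$.

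For well-definedness of $\phi$, I would decompose $P$ into its $U(1)$-homogeneous components $P_n$: for $x \in P_n$ and $h = u^m$ one computes $\phi(x \otimes u^m) = x \otimes u^{n+Nm}$, and applying $\tilde{\alpha}_\zeta$ from \eqref{alphatilde} with $\zeta = e^{2\pi i/N}$ multiplies this by $\zeta^{n-(n+Nm)} = 1$, so the image is $\Z/N\Z$-invariant. Conversely, the characterisation of $\Z/N\Z$-invariants forces any element of the invariant subspace to be spanned by vectors $y \otimes u^m$ with $\deg(y) \equiv m \pmod{N}$; for such a vector $S(y_{(1)})\, u^m = u^{m-\deg(y)}$ has degree divisible by $N$, hence lies in ${}^{\co\ffun(\Z/N\Z)}\ffun(U(1)) = \psi(\ffun(U(1)))$, so the expression $\psi^{-1}(S(y_{(1)})\, u^m)$ in the formula for $\phi^{-1}$ is meaningful.

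The fact that $\phi$ and $\phi^{-1}$ are mutually inverse is a direct Sweedler computation using coassociativity and the antipode axiom. For example,
\[
\phi^{-1}(\phi(x \otimes h)) = (x_{(0)})_{(0)} \otimes \psi^{-1}\bigl(S((x_{(0)})_{(1)})\, x_{(1)}\, \psi(h)\bigr) = x_{(0)} \otimes \psi^{-1}\bigl(\varepsilon(x_{(1)})\, \psi(h)\bigr) = x \otimes h,
\]
and the reverse composition is analogous. For the $*$-algebra structure, multiplicativity follows because $\rho$ is an algebra map, $\psi$ is a Hopf algebra map, and $\ffun(U(1))$ is commutative (which permits $\psi(h)$ to be commuted past $x'_{(1)}$ when expanding $\phi(x \otimes h)\phi(x' \otimes h')$ and comparing with $\phi((x \otimes h)(x' \otimes h'))$); the $*$-compatibility follows similarly from $\rho$ and $\psi$ being $*$-preserving together with the commutativity of $\ffun(U(1))$.

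The main obstacle is the well-definedness of $\phi^{-1}$, since it requires $S(y_{(1)})\, h$ to land in the image of $\psi$ whenever $y \otimes h$ is $\Z/N\Z$-invariant. This is the one point where the specific structure of the isomorphism $\psi$ (raising to the $N$-th power) and the description of $\Z/N\Z$-invariance interact non-trivially; once it is in place, everything else reduces to routine Hopf-algebra manipulations in the spirit of \cite[Lemma~5.3]{bz}.
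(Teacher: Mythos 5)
Your proposal is correct, and it fills in precisely the details that the paper leaves implicit: the paper simply asserts that the isomorphism follows from ``a straightforward calculation inspired by [Lemma~5.3]{bz}'' using the isomorphism $\psi$. Your verification (invariance of the image of $\phi$, well-definedness of $\phi^{-1}$ via the eigenspace description of the $\Z/N\Z$-invariants, mutual inversion by coassociativity and the antipode axiom, and the $*$-homomorphism property from commutativity of $\ffun(U(1))$) is exactly the intended computation.
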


\subsection{Prolongations in the $C^*$-setting}

As above, we can argue that the prolongation of 
$C(S^3_{pq\theta})$ by $\ffun(U(1))$ is a principal
comodule algebra. However, we need to apply a different reasoning
than above to show that it is not cleft. Recall first that,
by Theorem \ref{thm:not-free}, the finitely generated projective  
left $C(L^N_{pq\theta})$-module $L_N$ is not free. Together with the natural identifications
\[
L_N\cong C(S^3_{pq\theta})\underset{\ffun(\Z/N\Z)}{\Box}\C\cong C(S^3_{pq\theta})\underset{\ffun(\Z/N\Z)}{\Box}\ffun(U(1))\underset{\ffun(U(1))}{\Box}\C,
\]
we see that the rightmost module is also not free. Since every module associated with a cleft comodule algebra is necessarily free, we arrive at the following corollary of Theorem \ref{thm:not-free}.
\begin{corollary}
The $\ffun(U(1))$-comodule algebra $C(S^3_{pq\theta})\underset{\ffun(\Z/N\Z)}{\Box}\ffun(U(1))$ is not cleft.
\end{corollary}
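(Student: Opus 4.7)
The plan is to exploit the contrapositive of the classical fact (mentioned in the preliminaries) that all modules associated to a cleft $H$-comodule algebra are free. Thus, to rule out cleftness of the $\ffun(U(1))$-comodule algebra $P:=C(S^3_{pq\theta})\Box_{\ffun(\Z/N\Z)}\ffun(U(1))$, it suffices to exhibit a one-dimensional left $\ffun(U(1))$-comodule $V$ such that the associated left $P^{\co\ffun(U(1))}$-module $P\Box_{\ffun(U(1))}V$ is not free. The natural candidate is $V=\C$ coacted on by the group-like $u\in\ffun(U(1))$, for then $P\Box_{\ffun(U(1))}\C$ should recover the line module $L_N$ of Theorem~\ref{thm:not-free}, which we already know is not free (as it fails to be even stably free).

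Concretely, I would proceed as follows. First, I would recall that $L_N$ has the identification $L_N\cong C(S^3_{pq\theta})\Box_{\ffun(\Z/N\Z)}\C$ used in the proof of Theorem~\ref{thm:not-free}, where $\C$ carries the left $\ffun(\Z/N\Z)$-coaction determined by the group-like $\tilde{u}=\pi(u)$. Next, I would observe that the same $\C$ carries a left $\ffun(U(1))$-coaction via $u$, and the two coactions are intertwined precisely by the Hopf algebra surjection $\pi:\ffun(U(1))\twoheadrightarrow\ffun(\Z/N\Z)$. This compatibility yields the associativity-like identification of cotensor products
\[
C(S^3_{pq\theta})\underset{\ffun(\Z/N\Z)}{\Box}\C
\;\cong\;
\Bigl(C(S^3_{pq\theta})\underset{\ffun(\Z/N\Z)}{\Box}\ffun(U(1))\Bigr)\underset{\ffun(U(1))}{\Box}\C
\;=\; P\underset{\ffun(U(1))}{\Box}\C,
\]
which is exactly the claim written down in the excerpt preceding the corollary. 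The identification is a purely formal consequence of the definition of the cotensor product together with the fact that $\ffun(U(1))$ is faithfully flat (indeed free) over $\ffun(\Z/N\Z)$ via $\pi$; unraveling the two defining equalizers and using coassociativity of the $\ffun(U(1))$-coaction on $\ffun(U(1))$ itself makes the isomorphism manifest.

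With this identification in hand, the conclusion is immediate: if $P$ were a cleft $\ffun(U(1))$-comodule algebra, then the associated module $P\Box_{\ffun(U(1))}\C$ would be free over $P^{\co\ffun(U(1))}=C(L^N_{pq\theta})$, contradicting Theorem~\ref{thm:not-free}, which asserts that $L_N$ is not even stably free, hence in particular not free. Therefore $P$ is not cleft.

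The only real point requiring care is the cotensor product identification in the displayed isomorphism above; the rest is a direct application of the general principle about associated modules of cleft extensions. I expect this compatibility check to be the main obstacle, but it is a standard computation in the Hopf-Galois literature given that $\pi$ is a Hopf algebra surjection and the coaction on $\C$ factors through $\pi$, so no substantively new ideas beyond those already present in Subsection~\ref{principal} are required.
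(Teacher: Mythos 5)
Your proposal is correct and follows essentially the same route as the paper: identify $L_N\cong C(S^3_{pq\theta})\Box_{\ffun(\Z/N\Z)}\ffun(U(1))\Box_{\ffun(U(1))}\C$ as the associated module coming from the prolongation, then invoke Theorem~\ref{thm:not-free} (not stably free, hence not free) together with the fact that cleft comodule algebras have only free associated modules. The extra remarks you add about faithful flatness and the equalizer unraveling are a reasonable way to justify the cotensor product associativity the paper simply asserts.
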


To prove an analogue of Proposition~\ref{cotnislem}, we  
 use the identification $C(X,A)\cong A\bar{\ot}C(X)$, where
$X$ is a compact Hausdorff space, $A$ is a unital $C^*$-algebra,
$C(X,A)$ is the algebra of norm-continuous functions, and
$C(X):=C(X,\C)$.
Furthermore, we easily check that the formulae
\begin{align}
(F_1(f))(e^{i\varphi_1},e^{i\varphi_2})
&:=\alpha_{e^{i\varphi_1}}(f(e^{i\varphi_2})),\\
(F_2(g))(e^{i\varphi})
&:=g(e^{i\varphi},e^{i\varphi N}),\\
(G_1(f))(e^{i\varphi_1},e^{i\varphi_2})
&:=\alpha_{e^{i\varphi_1}}(f(e^{i\varphi_2})),\\
(G_2(g))(e^{i\varphi})
&:=g(e^{\frac{-i\varphi}{N}},e^{\frac{i\varphi}{N}}),
\end{align}
define $C^*$-homomorphisms in the  diagram
\[
\xymatrix{
C(U(1),C(S^3_{pq\theta}))\ar@<0.5ex>[r]^-{F_1} &
C(U(1)\times U(1),C(S^3_{pq\theta}))\ar@<0.5ex>[l]^-{G_2}
\ar@<0.5ex>[r]^-{F_2}&C_{\Z/N\Z}(U(1),C(S^3_{pq\theta}))
\ar@<0.5ex>[l]^-{G_1}\\
C(S^3_{pq\theta})\bar{\otimes}C(U(1))\ar[u]_\cong 
\ar@<0.5ex>[rr]
&& 
\left(C(S^3_{pq\theta})\bar{\otimes}C(U(1))\right)^{\Z/N\Z}.
\ar[u]_\cong\ar@<0.5ex>[ll]
}
\]
Here the right bottom corner is defined via an extension of
the action \eqref{alphatilde} to the $C^*$-algebra
$C(S^3_{pq\theta})\bar{\otimes}C(U(1))$. Verifying that
$F_2\circ F_1$ and $G_2\circ G_1$ are mutually inverse maps
yields the desired isomorphism result.
\begin{proposition}
The $C^*$-algebras $C(S^3_{pq\theta})\bar{\otimes}C(U(1))$ and
$(C(S^3_{pq\theta})\bar{\otimes}C(U(1)))^{\Z/N\Z}$ are isomorphic.
\end{proposition}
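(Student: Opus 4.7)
The strategy is to identify both $C^*$-algebras with spaces of $C(S^3_{pq\theta})$-valued continuous functions on $U(1)$ via the standard isomorphism $A\bar{\otimes}C(X)\cong C(X,A)$ for $X$ compact Hausdorff and $A$ a unital $C^*$-algebra. Under this identification, $C(S^3_{pq\theta})\bar{\otimes}C(U(1))$ becomes $C(U(1),C(S^3_{pq\theta}))$, and the action $\tilde{\alpha}$ from~\eqref{alphatilde} extends continuously to the $C^*$-completion. Unwinding its formula, one sees that an element $f\in C(U(1),C(S^3_{pq\theta}))$ is $\Z/N\Z$-invariant precisely when
\[
f(\zeta w)=\alpha_\zeta(f(w))\quad\text{for all } \zeta\in\Z/N\Z\subset U(1),\; w\in U(1),
\]
so the fixed-point algebra is identified with the $C^*$-subalgebra $C_{\Z/N\Z}(U(1),C(S^3_{pq\theta}))$ of $\Z/N\Z$-equivariant continuous functions.

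With this translation in hand, I would take the isomorphism to be $F_2\circ F_1$ with inverse $G_2\circ G_1$, where the four maps are defined as in the diagram. A direct computation gives
\[
(F_2\circ F_1)(f)(e^{i\varphi})=\alpha_{e^{i\varphi}}(f(e^{i\varphi N})),\qquad
(G_2\circ G_1)(g)(e^{i\varphi})=\alpha_{e^{-i\varphi/N}}(g(e^{i\varphi/N})).
\]
The first step is to verify that each of $F_1,F_2,G_1,G_2$ is a unital $*$-homomorphism; this is straightforward since $\alpha$ acts by $*$-automorphisms and the remaining operations are pointwise evaluations or compositions with continuous maps. One should also check that $F_2\circ F_1$ actually lands in the equivariant functions, which follows because $e^{i\varphi N}$ is invariant under multiplication of $e^{i\varphi}$ by an $N$-th root of unity, so the required identity $(F_2\circ F_1)(f)(\zeta e^{i\varphi})=\alpha_\zeta((F_2\circ F_1)(f)(e^{i\varphi}))$ is automatic for $\zeta\in\Z/N\Z$.

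The main obstacle is the well-definedness of $G_2$, whose formula apparently depends on the choice of an $N$-th root $e^{i\varphi/N}$ of $e^{i\varphi}$. Replacing this root by $e^{2\pi ik/N}e^{i\varphi/N}$ changes the value $\alpha_{e^{-i\varphi/N}}(g(e^{i\varphi/N}))$ to $\alpha_{e^{-2\pi ik/N}}\bigl(\alpha_{e^{-i\varphi/N}}(g(e^{2\pi ik/N}e^{i\varphi/N}))\bigr)$, which equals the original expression precisely by the $\Z/N\Z$-equivariance of $g$. Once this consistency is secured (together with continuity at the points where the local branch of the $N$-th root changes, which follows from the equivariance on the overlap), the identities $(G_2\circ G_1)\circ(F_2\circ F_1)=\mathrm{id}$ and $(F_2\circ F_1)\circ(G_2\circ G_1)=\mathrm{id}$ collapse to routine cancellations of the form $\alpha_\zeta\circ\alpha_{\zeta^{-1}}=\mathrm{id}$, completing the proof.
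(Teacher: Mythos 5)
Your proposal is correct and follows essentially the same route as the paper, composing $F_2\circ F_1$ with inverse $G_2\circ G_1$ after identifying $C(S^3_{pq\theta})\bar{\otimes}C(U(1))$ with $C(U(1),C(S^3_{pq\theta}))$ and the fixed-point subalgebra with the $\Z/N\Z$-equivariant functions. You also address a point the paper leaves implicit: the formula for $G_2$ involves a choice of $N$-th root and is therefore not globally defined on $C(U(1)\times U(1),C(S^3_{pq\theta}))$, but the composition $G_2\circ G_1$ is independent of the branch on equivariant functions for exactly the reason you give.
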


\section*{Acknowledgements}
This work is part of the EU-project {\sl Geometry and
symmetry of quantum spaces} PIRSES-GA-2008-230836.
 It was  partially supported by the
Polish Government grant 1261/7.PRUE/2009/7. 
The first author gratefully acknowledges the generous
hospitality of the Mathematical Sciences Institute
of the Australian National University that made his
visit to Canberra possible and productive.
The first author is very greatful to Nigel Higson for clarifying
to him the Bass connecting homomorphism in the $C^*$-setting.
The second author was supported by 
the Australian Research Council and the Australian Academy of Sciences. 
The third author thanks Tomasz Brzezi\'nski for discussions.

\end{document}